\newtheorem{problem}{Problem}
\newtheorem{proposition}{Proposition}
\newtheorem{lemma}{Lemma}
\newtheorem{theorem}{Theorem}
\newtheorem{corollary}{Corollary}
\newtheorem{rmk}{Remark}
\newtheorem{assumption}{Assumption}
\newcommand{\calA}{{\mathcal A}}
\newcommand{\calH}{{\mathcal H}}
\newcommand{\calN}{{\mathcal N}}
\newcommand{\calR}{{\mathcal R}}    
\newcommand{\calS}{{\mathcal S}}    
\newcommand{\calU}{{\mathcal U}}
\newcommand{\scrR}{{\mathscr R}}
\newcommand{\bbE}{{\mathbb E}}
\newcommand{\bbH}{{\mathbb H}}
\newcommand{\bbP}{{\mathbb P}}    
\newcommand{\bbR}{{\mathbb R}}
\newcommand{\bbZ}{{\mathbb Z}}
\newcommand{\diag}{\mathop{\rm diag}\nolimits}
\newcommand{\rmd}{{\rm d}}
\newcommand{\rmr}{{\rm r}}
\newcommand{\what}[1]{{\widehat{#1}}}
\newcommand{\wtilde}[1]{{\widetilde{#1}}}
\newcommand{\bbra}[1]{\ensuremath{[\![#1]\!]} }  
\newcommand{\kl}[2]{D_{\rm KL} \left({#1}\|{#2}\right)}
\newcommand{\ft}{N}
\newcommand{\bsigma}{{\bar{\Sigma}}}
\newcommand{\bara}{{\bar{A}}}
\newcommand{\barq}{{\bar{Q}}}
\newcommand{\cro}{{S}}
\newcommand{\esu}{{D}}
\newcommand{\cyan}[1]{{\color{black}{#1}}} 
\newcommand{\red}[1]{{\color{black}{#1}}} 
\newcommand{\rev}[1]{{\color{black}{#1}}} 
\newcommand{\modi}[1]{{\color{black}{#1}}} 
\newcommand{\revv}[1]{{\color{black}{#1}}} 
\newcommand{\modii}[1]{{\color{black}{#1}}} 
\newcommand{\fin}[1]{{\color{black}{#1}}} 
\newcommand{\gram}[1]{{\color{black}{#1}}} 
\newcommand{\dlt}[1]{} 
\newcommand{\schr}{{Schr\"{o}dinger}}
\def\BibTeX{{\rm B\kern-.05em{\sc i\kern-.025em b}\kern-.08em
    T\kern-.1667em\lower.7ex\hbox{E}\kern-.125emX}}
\begin{document}
\title{Maximum Entropy Density Control of Discrete-Time Linear Systems with \\Quadratic Cost}
\author{Kaito Ito, \IEEEmembership{Member, IEEE}, and Kenji Kashima, \IEEEmembership{Senior Member, IEEE}
\thanks{This work was supported in part by JSPS KAKENHI Grant Numbers JP23K19117, JP21H04875, and by JST, ACT-X Grant Number JPMJAX2102.}
\thanks{K. Ito is with the Department of Information Physics and Computing, The University of Tokyo, Tokyo, Japan (e-mail: kaito@g.ecc.u-tokyo.ac.jp). }
\thanks{K. Kashima is with the Graduate School of Informatics, Kyoto University, Kyoto, Japan (e-mail: kk@i.kyoto-u.ac.jp).}
}

\maketitle

\begin{abstract}
    This paper addresses the problem of steering the distribution of the state of a discrete-time linear system to a given target distribution while minimizing an entropy-regularized cost functional.
	This problem is called a maximum entropy density control problem.
	Specifically, the running cost is given by quadratic forms of the state and the control input, and the initial and \fin{target} distributions are Gaussian. We first reveal that our problem boils down to solving two Riccati difference equations coupled through their boundary values. Based on them, we give the closed-form expression of the unique optimal policy.
    Next, we show that the \fin{optimal} density control of a \modi{backward} system can be obtained simultaneously with the forward-time optimal policy.
    \fin{The backward solution gives another expression of the forward solution.}
	Finally, by considering the limit where the entropy regularization vanishes, we derive the \fin{unregularized density control in closed form.}
\end{abstract}

\begin{IEEEkeywords}
    Maximum entropy, stochastic control, optimal control, linear systems.
\end{IEEEkeywords}

\section{Introduction}\label{sec:intro}
\IEEEPARstart{T}{his} paper is concerned with steering the distribution of the state of a linear dynamical system to a desired distribution while minimizing a cost functional. The distribution represents probabilistic uncertainty, and hence this kind of problem is referred to as uncertainty control~\cite{Chen2021review}, and has potential applications for example to uncertainty-aware guidance and planning~\cite{Ridderhof2018,Okamoto2019,Knaup2023,Shirai2023}.
Recently, steering a state distribution to a target distribution has gained considerable attention in generative modeling~\cite{Song2021,De2021}.
Especially when the distribution has a density function, uncertainty control is also called density control, where the density can be interpreted as the distribution of a large number of identical agents~\cite{Krishnan2018,Chen2018mean,Brockett2012}.
\red{Uncertainty control considering only the first and second moments of the state distribution is referred to as covariance control (steering)~\cite{Grigoriadis1997,Collins1987,Bakolas2016,Goldshtein2017,Liu2022}.}

\red{Several works have been devoted to the density control and covariance control problems for both continuous-time and discrete-time systems.
However, the cases in which the optimal policy is explicitly given are quite limited.}
In \cite{Chen2016part1}, minimum energy density control of continuous-time linear stochastic systems between Gaussian distributions was considered, and the explicit form of the optimal policy was derived. 
The work~\cite{Chen2018} also obtained the closed-form expression of the optimal policy in the case where the performance index has a quadratic state cost in addition to a control cost.

\rev{
    \fin{In} discrete-time cases, \cite{Goldshtein2017} and \cite{Balci2023} derived explicit solutions for the covariance control of deterministic systems with quadratic cost in the form of the initial state feedback policy.
\fin{However, until recently}, the explicit forms of the optimal {\em current} state feedback policies for density or covariance control problems \fin{had} not been found, unlike in the continuous-time cases.}
\fin{Recently}, \cite{Ito2022maximum} considered an {\em entropy-regularized} version of the minimum energy density control of a discrete-time linear system and obtained the optimal current state feedback policy in closed form.

Entropy-regularized optimal control is called maximum entropy (MaxEnt) control~\cite{Levine2018,Kim2023}. The entropy regularization encourages higher entropy of the optimal policy. MaxEnt control has attracted much attention especially in reinforcement learning (RL). This is because the entropy regularization brings many advantages for RL such as random exploration considering control performance~\cite{Haarnoja2017}, robustness against disturbances~\cite{Eysenbach2021}, and equivalence between the optimal control problem and an inference problem~\cite{Levine2018,Ito2024risk}.
Despite the benefits of the regularization, the resulting high-entropy policy increases the state uncertainty, which severely limits its applicability for example to safety-critical systems.
MaxEnt density control enables us to tame the state uncertainty while using high-entropy policies.

{\it Contributions:}
In this paper, we extend the result in \cite{Ito2022maximum} to the case where a quadratic state cost and \rev{a cross term between the state and the control input are} also present.
The analysis in \cite{Ito2022maximum} is based on coupled Lyapunov equations.
When considering a state cost \modi{and a cross term}, this approach can no longer be employed. Instead, we show that a key ingredient for the analysis of our problem is coupled Riccati equations, which are nonlinear and thus require a very different approach for their analysis from the Lyapunov equations.
The main contributions of this paper are as follows:
\begin{enumerate}
	\item[1)] We tackle the MaxEnt density control problem for discrete-time linear deterministic systems with Gaussian initial and target distributions. \rev{First, we reveal that this problem can be reduced to solving coupled Riccati equations. Although this kind of reduction is known for the continuous-time problem whose cost does not have the cross term~\cite{Chen2018}, it is still valid for discrete-time systems with the cross term.} \rev{By analyzing the Riccati equations}, we show the existence and uniqueness of the optimal policy, and derive its explicit form. 
    \item[2)] \red{We consider the MaxEnt density control of \rev{a backward system, which becomes the time reversal of the original system in the absence of the cross term of the cost.} Then, we reveal that the \modi{backward} density control problem can be solved simultaneously with the forward-time MaxEnt density control problem.} \rev{This result is a generalization of the fact that the classical \schr \ bridge (SB) problem~\cite{Leonard2014}, which is equivalent to a continuous-time density control problem, simultaneously solves its time-reversed density control problem~\cite{Chen2021liaisons}.}
	\item[3)] As a limiting case of the MaxEnt density control, we investigate the density control without entropy regularization. By considering the limit of vanishing regularization, we obtain the optimal \modii{current state feedback policy of} the unregularized density control problem.
\end{enumerate}

{\it Related work:}
We now introduce related work on density (covariance) control with quadratic cost.
In \cite{Liu2022}, covariance control \fin{of} a discrete-time linear stochastic system was investigated, and the existence and uniqueness of the optimal solution \fin{were} established under the reachability of the system. It was also shown that the solution can be computed by solving a semi-definite programming.
In \cite{De2021discrete}, the authors considered the optimal transport problem~\cite{Villani2003} whose transport cost is given by the value function for the linear quadratic (LQ) optimal control.
Then, they constructed a suboptimal \fin{density control} policy of a discrete-time linear system with quadratic cost using the optimal transport map. Under some condition, this policy is optimal~\cite{Terpin2023}.
However, the closed-form expression of the transport map is not derived even for Gaussian end-point distributions.

In \cite{Bakolas2016}, the author applied a convex relaxation technique to covariance control \fin{of} a discrete-time linear system with constraints on the state and the control input.
The work \cite{Halder2016} considered density control of a linear system with Wasserstein terminal cost instead of a final density constraint, and obtained the optimality condition for a linear controller gain.
Other scenarios for covariance control have been studied, e.g., chance constraints~\cite{Okamoto2018}, nonlinear systems, and non-quadratic cost~\cite{Yi2020,Ridderhof2019}.

\cyan{In \cite{Chen2018}, the zero-noise limit for density control of a continuous-time linear system driven by a Wiener process was considered. As a result, the closed-form solution to the density control problem for the deterministic system without the Wiener process was obtained.}
The zero-noise limit for a stochastic single-integrator system with general end-point distributions and running cost was studied in \cite{Mikami2004,Mikami2008}.

\revv{
    The MaxEnt LQ density control problem without state costs is equivalent to a discrete-time SB problem~\cite{Ito2022maximum}. For continuous-time cases and for discrete time and space cases, SB problems reduce to solving the so-called \schr~system~\cite{Chen2021liaisons}. The work \cite{Chen2015anisotropic} studied the continuous-time LQ density control as a generalized SB and revealed that the associated \schr~system can be rewritten as coupled Riccati differential equations. Despite the extensive studies on the continuous-time SB, there has been little work on discrete-time and continuous-space SB problems~\cite{Beghi1996}.
    In this work, we directly derive coupled Riccati difference equations for the MaxEnt LQ density control rather than dealing with a \schr~system, which is not known for our problem.
}

\textit{Organization:}
The remainder of this paper is organized as follows. In Section~\ref{sec:formulation}, we give the problem formulation and preliminary analysis. 
In Section~\ref{sec:solution}, we derive the optimal solution to our problem.
In Section~\ref{sec:reverse}, we study the \rev{backward density control problem associated with the forward MaxEnt density control.}
In Section~\ref{sec:zero-noise}, we investigate the unregularized density control problem as a limiting case of the MaxEnt density control problem.
In Section~\ref{sec:example}, we give a numerical example to illustrate the obtained results.
Some concluding remarks are given in Section~\ref{sec:conclusion}.

\textit{Notation:}
Let $ \bbR $ denote the set of real numbers and $ \bbZ_{>0} $ denote the set of positive integers. The set of integers $ \{k,k+1,\ldots,l\} \ (k< l) $ is denoted by $ \bbra{k,l} $.
Denote by $ \calS^n $ the set of all real symmetric $ n\times n $ matrices.
For matrices $ A, B \in \calS^{n} $, we write $ A \succ B $ (resp. $A \prec B$) if $ A - B $ is positive (resp. negative) definite. Similarly, $ A \succeq B $ means that $ A - B $ is positive semidefinite.
For $ A \succeq 0 $, $ A^{1/2} $ denotes the unique positive semidefinite square root.
The identity matrix is denoted by $ I $, and its dimension depends on the context.
The transpose of the inverse of an invertible matrix $ A $ is denoted by $ A^{-\top} $.
\cyan{Denote the determinant of a square matrix $ A $ by $ \det (A) $.}
The Euclidean norm is denoted by $ \|\cdot \| $.
For $ x\in \bbR^n $ and \revv{$  A\succ 0 $} of size $ n $, denote $\|x\|_A := (x^\top A x)^{1/2}$.
Let $(\Omega, \mathscr{F}, \bbP)$ be a complete probability space and $\bbE$ be the expectation with respect to $ \bbP $.
\cyan{The Kullback--Leibler divergence between probability densities $ p_x, p_y $ is denoted by $ \kl{p_x}{p_y} $ when it is defined.}
For an $ \bbR^n $-valued random vector $ w $, $ w\sim \calN(\mu,\Sigma) $ means that $ w $ has a multivariate Gaussian distribution with mean $ \mu \in \bbR^n $ and covariance matrix $ \Sigma $. When $ \Sigma \succ 0 $, the density function of $ w\sim \calN (\mu,\Sigma) $ is denoted by $ \calN (\cdot | \mu, \Sigma) $.
We use the same symbol for a random variable and its realization.

\section{Problem Formulation and Preliminary Analysis}\label{sec:formulation}
In this paper, we tackle the following problem.
\begin{problem}[MaxEnt density control problem]\label{prob:density_control}
	Find a policy $ \pi = \{\pi_k\}_{k=0}^{\ft-1} $ that solves
	\begin{align}
            &\underset{\pi }{\rm minimize}  && \bbE \Biggl[ \sum_{k=0}^{\ft-1} \biggl( \frac{1}{2} \begin{bmatrix}
                x_k \\ u_k
            \end{bmatrix}^\top \rev{\begin{bmatrix}
                Q_k & \cro_k \\
                \cro_k^\top & R_k          
            \end{bmatrix}} \begin{bmatrix}
                x_k \\ u_k
            \end{bmatrix} \nonumber\\
         & &&\qquad\qquad - \varepsilon\calH(\pi_k (\cdot | h_k))   \biggr)  \Biggr] \label{eq:cost} \\
			&{\rm subject~to} && x_{k+1} = A_k x_k + B_k u_k, \label{eq:system}\\
				& &&u_k \sim \pi_k (\cdot | h_k) \ {\rm given} \ h_k, \label{eq:policy}\\
				& &&h_k := \{ x_0,u_0,\ldots, x_{k-1},u_{k-1} ,x_k\}, \label{eq:history}\\
				& &&\fin{k = 0,\ldots,\ft-1}, \nonumber\\
				& &&x_0 \sim \calN(0,\bar{\Sigma}_0), \  x_N \sim \calN(0,\bar{\Sigma}_\ft) , \label{eq:cov_constraint}
	\end{align}
	where $ \bar{\Sigma}_0 $, $\bar{\Sigma}_\ft \succ 0$, $R_k \succ 0 $,
    \rev{$
        Q_k - \cro_k R_k^{-1} \cro_k^\top \succeq 0 
    $},
    $ \{x_k\} $ is an $ \bbR^n $-valued state process, $ \{u_k\} $ is an $ \bbR^m $-valued control process, $ N\in \bbZ_{>0} $ is a finite horizon, and $ \varepsilon > 0 $ is a regularization parameter. A stochastic policy $ \pi_k (\cdot| h_k) $ denotes the conditional density of $ u_k $ given the history
	$ 
		h_k \in \bbH_k := \bbR^n \times \bbR^m \times \cdots \times \bbR^n \times \bbR^m \times \bbR^n ,
	$ 
	and $ \calH(\pi_k (\cdot|h_k)) := -\int_{\bbR^m} \pi_k(u|h_k) \log \pi_k (u|h_k) \rmd u $ denotes the entropy of $ \pi_k(\cdot | h_k) $.
	\hfill $ \diamondsuit $
\end{problem}

We will consider general mean constraints
\begin{equation}\label{eq:general_mean}
	x_0 \sim \calN(\bar{\mu}_0, \bsigma_0), \ x_\ft \sim \calN(\bar{\mu}_\ft, \bsigma_\ft)
\end{equation}
in Section~\ref{sec:solution}.
Note that, without loss of generality, we may assume $ R_k \equiv I, \ \varepsilon = 1 $. In fact, the weight parameter $ \varepsilon $ can be absorbed into $ Q_k, R_k, \cro_k $ as $ Q_{\varepsilon,k} := Q_k/\varepsilon $, $ R_{\varepsilon,k} := R_k/\varepsilon $, $ \cro_{\varepsilon,k} := \cro_k/\varepsilon $.
Moreover, to see the reason for $ R_k $, let $ \wtilde{u}_k := R_{\varepsilon,k}^{1/2} u_k $ and $ \wtilde{\pi}_k (\cdot | h_k ) $ be the conditional density of $ \wtilde{u}_k $ given $ h_k $. Then, it holds that
\begin{align*}
	\calH(\wtilde{\pi}_k (\cdot | h_k)) &= \calH (\pi_k (\cdot | h_k)) + \log  (\det (R_{\varepsilon,k}^{1/2})), \ \forall h_k  \in \bbH_k . 
\end{align*}
See e.g., \cite[Section 8.6]{Cover2006}. In summary, Problem~\ref{prob:density_control} can be rewritten as
\begin{align}
	&\underset{\tilde{\pi} = \{\tilde{\pi}_k \} }{\rm minimize}  && \bbE \Biggl[ \sum_{k=0}^{\ft-1} \biggl( \frac{1}{2} \begin{bmatrix}
        x_k \\ \wtilde{u}_k
    \end{bmatrix}^\top
    \begin{bmatrix}
        Q_{\varepsilon,k} & \cro_{R,\varepsilon,k} \\
        \cro_{R,\varepsilon,k}^\top & I
    \end{bmatrix}
    \begin{bmatrix}
        x_k \\ \wtilde{u}_k
    \end{bmatrix}
         \nonumber\\
		 & &&\qquad\qquad - \calH(\wtilde{\pi}_k (\cdot | h_k))   \biggr)  \Biggr]  \label{prob:transform}\\
	&{\rm subject~to} && \eqref{eq:cov_constraint}, \ x_{k+1} = A_k x_k + \sqrt{\varepsilon} B_k R_k^{-1/2} \wtilde{u}_k , \label{eq:system_trans}\\
	& &&\wtilde{u}_k \sim \wtilde{\pi}_k (\cdot | h_k) \ {\rm given} \ h_k , \nonumber\\
	& && h_k = \{x_0, R_{\varepsilon,0}^{-1/2} \wtilde{u}_0, \ldots, \nonumber\\
	& && \hspace{1cm} x_{k-1}, R_{\varepsilon,k-1}^{-1/2} \wtilde{u}_{k-1}, x_k \}, \nonumber\\
				& &&\fin{k = 0,\ldots,\ft-1} ,\nonumber
\end{align}
where $ \cro_{R,\varepsilon,k} := \cro_{k} R_k^{-1/2} / \sqrt{\varepsilon} $.

\rev{
To investigate Problem~\ref{prob:density_control}, we first study a MaxEnt control problem with a terminal state cost instead of the final density constraint~\eqref{eq:cov_constraint}. Its optimal policy is shown to be the sum of linear state-feedback control and Gaussian noise (Proposition~\ref{prop:LQ}). Thanks to the linearity and the Gaussianity, we see that the optimal density control of Problem~\ref{prob:density_control} can be obtained by solving coupled Riccati and Lyapunov equations, which describe the optimality of a policy and the time evolution of the state covariance, respectively. Moreover, we show that the Lyapunov equation can be transformed into a Riccati equation (Proposition~\ref{prop:riccati}). Consequently, the MaxEnt density control problem boils down to solving the coupled Riccati equations (Proposition~\ref{prop:opt}).
}

\modi{Now, we introduce the auxiliary problem without the final density constraint as follows.}
\begin{problem}[MaxEnt LQ control problem]\label{prob:LQ_control}
	Find a policy $ \pi = \{\pi_k\}_{k=0}^{\ft-1} $ that solves
	\begin{align}
			&\underset{\pi }{\rm minimize}  && \bbE \Biggl[ \frac{1}{2} x_\ft^\top Fx_\ft + \sum_{k=0}^{\ft-1} \biggl( \frac{1}{2} \begin{bmatrix}
                x_k \\ u_k
            \end{bmatrix}^\top \begin{bmatrix}
                Q_k & \cro_k \\
                \cro_k^\top & R_k          
            \end{bmatrix} \begin{bmatrix}
                x_k \\ u_k
            \end{bmatrix}\nonumber\\
         & &&\qquad -  \varepsilon\calH(\pi_k (\cdot | h_k))   \biggr)  \Biggr] \label{eq:cost_lq} \\
			&{\rm subject~to} && \eqref{eq:system}\text{--}\eqref{eq:history}, \nonumber
	\end{align}
	where $ F\in \calS^n $, and the initial state $ x_0 $ has a finite second moment.
	\hfill $ \diamondsuit $
\end{problem}

Similar to \cite[Proposition~1]{Ito2022maximum}, \modi{which deals with $ Q_k \equiv 0 $, $ \cro_k \equiv 0 $}, Problem~\ref{prob:LQ_control} can be solved by the dynamic programming approach. In this paper, rather than it, we adopt a completion of squares argument for the proof.
Let
\begin{align}
    \rev{\bara_k := A_k - B_k R_k^{-1} \cro_k^\top , \ \barq_k := Q_k - \cro_k R_k^{-1} \cro_k^\top.} \label{eq:barA}
\end{align}
\begin{proposition}\label{prop:LQ}
   Assume that $ \Pi_k \in \calS^n $ satisfies $ R_k + B_k^\top \Pi_{k+1} B_k \succ 0 $ for any $ k\in \bbra{0,\ft-1} $ and is a solution to the following Riccati difference equation:
   \begin{align}
      \Pi_k &= \bara_k^\top \Pi_{k+1} \bara_k - \bara_k^\top \Pi_{k+1} B_k (R_k + B_k^\top \Pi_{k+1}B_k)^{-1} \nonumber\\
      &\quad\times  B_k^\top \Pi_{k+1} \bara_k + \barq_k, ~~ k\in \bbra{0,\ft-1}, \label{eq:riccati_general}\\
      \Pi_\ft &= F .
   \end{align}
   Then, the unique optimal policy of Problem~\ref{prob:LQ_control} is given by
   \begin{align}
      &\pi_k^* (u|h_k) \nonumber\\
      &= \calN\bigl( u  |  -(R_k + B_k^\top \Pi_{k+1} B_k)^{-1} (B_k^\top \Pi_{k+1} A_k + \rev{\cro_k^\top } )x_k, \nonumber\\
      &\qquad\qquad \varepsilon(R_k + B_k^\top \Pi_{k+1} B_k)^{-1}\bigr) \label{eq:opt_policy}
   \end{align}
	 for any $ k \in \bbra{0,\ft-1}, \ u\in \bbR^m, $ and $ h_k \in \bbH_k  $.
	 In addition, the minimum value of \eqref{eq:cost_lq} is given by
	 \begin{align}
		&\bbE\left[\frac{1}{2} x_0^\top \Pi_0 x_0\right] \nonumber\\
		& - \varepsilon\sum_{k=0}^{\ft-1} \log \sqrt{(2\pi)^m \det (\varepsilon (R_k + B_k^\top \Pi_{k+1} B_k )^{-1})} .\label{eq:lq_minimum}
	 \end{align}
\end{proposition}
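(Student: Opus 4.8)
The plan is to prove Proposition~\ref{prop:LQ} by a backward‑in‑time completion‑of‑squares argument in which the per‑stage minimization over the stochastic policy is recast as a Gibbs (free‑energy) variational problem. Fix $k\in\bbra{0,\ft-1}$ and write $M_k := R_k + B_k^\top \Pi_{k+1}B_k \succ 0$ and $N_k := B_k^\top \Pi_{k+1}A_k + \cro_k^\top$. Substituting \eqref{eq:system} into $\tfrac12 x_{k+1}^\top \Pi_{k+1} x_{k+1}$, expanding, and completing the square in $u_k$ yields
\begin{align*}
 &\tfrac12 x_{k+1}^\top \Pi_{k+1} x_{k+1} + \tfrac12 \begin{bmatrix} x_k \\ u_k \end{bmatrix}^\top \begin{bmatrix} Q_k & \cro_k \\ \cro_k^\top & R_k \end{bmatrix} \begin{bmatrix} x_k \\ u_k \end{bmatrix} \\
 &\quad = \tfrac12 \bigl\| u_k + M_k^{-1} N_k x_k \bigr\|_{M_k}^2 \\
 &\qquad + \tfrac12 x_k^\top \bigl( A_k^\top \Pi_{k+1} A_k + Q_k - N_k^\top M_k^{-1} N_k \bigr) x_k .
\end{align*}
I would then invoke the standard cross‑term‑elimination identity — equivalently, the change of variable $u_k \mapsto u_k + R_k^{-1}\cro_k^\top x_k$, which turns $(A_k, Q_k, \cro_k)$ into $(\bara_k, \barq_k, 0)$ while leaving $R_k, B_k$ unchanged — to rewrite $A_k^\top \Pi_{k+1} A_k + Q_k - N_k^\top M_k^{-1} N_k$ as the right‑hand side of \eqref{eq:riccati_general}, i.e.\ as $\Pi_k$; this identity is checked by a direct (if slightly tedious) matrix computation. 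The gain $M_k^{-1} N_k$ appearing here is exactly the mean gain in \eqref{eq:opt_policy}, and $\varepsilon M_k^{-1}$ will turn out to be the covariance.

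With this identity in hand, define the candidate cost‑to‑go $W_k(x) := \tfrac12 x^\top \Pi_k x - \varepsilon \sum_{j=k}^{\ft-1} \log\sqrt{(2\pi)^m \det(\varepsilon M_j^{-1})}$, so that $W_\ft(x) = \tfrac12 x^\top F x$ and the claimed optimal value \eqref{eq:lq_minimum} equals $\bbE[W_0(x_0)]$. I would prove by backward induction on $k$ that, for every admissible policy, the expected cost accumulated from time $k$ onward is at least $\bbE[W_k(x_k)]$, with equality if and only if $\pi_j = \pi_j^*$ for all $j \ge k$. The base case $k = \ft$ is the terminal identity $\tfrac12 x_\ft^\top F x_\ft = W_\ft(x_\ft)$. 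For the inductive step, condition on $h_k$, apply the induction hypothesis to the tail starting at $k+1$, and use the per‑stage identity above; the conditional cost‑to‑go then equals $W_k(x_k) + g(\pi_k(\cdot\mid h_k))$, where, after absorbing $\tfrac12\|\cdot\|_{M_k}^2$ into the log‑density of $\calN(\cdot\mid -M_k^{-1}N_k x_k,\varepsilon M_k^{-1})$, one finds $g(\pi_k(\cdot\mid h_k)) = \varepsilon\,\kl{\pi_k(\cdot\mid h_k)}{\pi_k^*(\cdot\mid h_k)} \ge 0$, with equality precisely when $\pi_k(\cdot\mid h_k) = \calN(\cdot \mid -M_k^{-1}N_k x_k,\, \varepsilon M_k^{-1}) = \pi_k^*(\cdot\mid h_k)$. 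Taking expectations over $h_k$ closes the induction, and the case $k = 0$ delivers the optimal value \eqref{eq:lq_minimum}, the optimality of $\pi^*$, and — via the strict KL characterization — its uniqueness.

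The routine parts are the completion of squares, the cross‑term identity, and the Gaussian free‑energy computation identifying $g(\pi_k(\cdot\mid h_k))$ with $\varepsilon\,\kl{\pi_k(\cdot\mid h_k)}{\pi_k^*(\cdot\mid h_k)}$. I expect the main obstacle to be the rigorous handling of \emph{general} history‑dependent, possibly non‑Gaussian admissible policies in the backward recursion: one must argue measurability of the conditional laws $\pi_k(\cdot\mid h_k)$, confirm that the Gibbs variational step remains valid when the entropy or the running cost is $+\infty$ (in which case the lower bound still holds trivially), and verify that $\pi^*$ is itself admissible with finite cost — which follows because under $\pi^*$ the closed loop is a linear recursion driven by Gaussian noise, so each $x_k$ is Gaussian with finite second moment whenever $x_0$ is. Once these integrability and measurability points are dispatched, the ``equality iff $\pi_k(\cdot\mid h_k)=\pi_k^*(\cdot\mid h_k)$ a.s.'' conclusion gives uniqueness in the appropriate ($\bbP$‑almost‑sure along trajectories) sense.
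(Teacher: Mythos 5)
Your proposal is correct and is essentially the paper's own argument: the same completion of squares via the Riccati equation (with the cross-term elimination identity turning $A_k^\top\Pi_{k+1}A_k+Q_k-N_k^\top M_k^{-1}N_k$ into $\Pi_k$), followed by recognizing the per-stage cost-minus-entropy term as $\varepsilon$ times a KL divergence to $\calN\bigl(\cdot\mid -M_k^{-1}N_k x_k,\ \varepsilon M_k^{-1}\bigr)$ plus a policy-independent constant, which is nonnegative and vanishes exactly at $\pi_k^*$. The only difference is organizational — you package the telescoping identities as a backward induction on the value function $W_k$, whereas the paper sums them over the whole horizon at once — and your added care about measurability and integrability is a refinement rather than a new idea.
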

\begin{proof}
	Define $ V_k := R_k + B_k^\top \Pi_{k+1} B_k $.
	By a completion of squares argument, the cost functional~\eqref{eq:cost_lq} is rewritten as
	\begin{align}
        &\sum_{k=0}^{\ft-1} \biggl( \bbE\biggl[\frac{1}{2} \|u_k + V_k^{-1} (B_k^\top \Pi_{k+1} A_k + \cro_k^\top ) x_k \|_{V_k}^2 \nonumber\\
        &\quad- \varepsilon\calH (\pi_k(\cdot | h_k))   \biggr] \biggr) + \bbE\left[\frac{1}{2} x_0^\top \Pi_0 x_0\right] ,\label{eq:completion_square}
	\end{align}
	where we used \eqref{eq:system},~\eqref{eq:riccati_general}, and the second term does not depend on $ \pi $.
	Since $ V_k \succ 0 $ by assumption, the term in the parentheses is written as
	\begin{align*}
        &\rev{\bbE\biggl[\bbE\biggl[\frac{1}{2} \|u_k + V_k^{-1} (B_k^\top \Pi_{k+1} A_k + \cro_k^\top ) x_k \|_{V_k}^2} \\
        &\quad+ \varepsilon \int_{\bbR^m} \pi_k(u|h_k) \log \pi_k(u|h_k) \rmd u \ \biggl| \ h_k  \biggr] \biggr]\\
        &= \varepsilon\bbE\biggl[ \int \pi_k(u|h_k) \biggl( \frac{\|u + V_k^{-1} (B_k^\top \Pi_{k+1} A_k + \cro_k^\top )x_k \|_{V_k}^2}{2\varepsilon} \\
		&\quad + \log \pi_k(u|h_k) \biggr) \rmd u \biggr] \\
		&= \cyan{\varepsilon} \bbE \bigl[ D_{\rm KL}\bigl(\pi_k(\cdot|h_k) \bigl\| \\
        &\qquad \calN(\cdot | -V_k^{-1} (B_k^\top \Pi_{k+1} A_k + \cro_k^\top )x_k, \varepsilon V_k^{-1} )\bigr) \bigr] \\
		&\quad -  \varepsilon\log \sqrt{(2\pi)^m {\rm det}( \varepsilon V_k^{-1})} .
	\end{align*}
	Therefore, \eqref{eq:completion_square} attains its minimum \eqref{eq:lq_minimum} if and only if the policy is given by \eqref{eq:opt_policy}.
\end{proof}

By the above result, the optimal policy $ \pi_k (\cdot | h_k) $ requires only the current state $ x_k $. In the following, we will abuse the notation by writing $ \pi_k (\cdot | x_k) $.
As $ \varepsilon \searrow 0 $, \eqref{eq:opt_policy} reduces to the well-known LQ optimal controller~\cite{Lewis2012}
\[
	u_k = -(R_k + B_k^\top \Pi_{k+1} B_k)^{-1} (B_k^\top \Pi_{k+1} A_k + \cro_k^\top ) x_k,
\]
and the minimum value~\eqref{eq:lq_minimum} becomes $ \bbE[x_0^\top \Pi_0 x_0 /2] $.

For simplicity, we will assume $ R_k \equiv I $, $ \varepsilon = 1 $ for the reminder of this paper except in Section~\ref{sec:zero-noise}.
The optimal state process $ \{x_k^*\} $ for Problem~\ref{prob:LQ_control} driven by the policy \eqref{eq:opt_policy} is given by
\begin{align*}
   x_{k+1}^* &= \calA_k(\Pi_{k+1}) x_k^* + B_k w_k^*, \\
    w_k^* &\sim \calN(0,(I + B_k^\top \Pi_{k+1} B_k)^{-1}) ,
\end{align*}
where $ \{w_k^*\} $ is an independent sequence, and
\begin{align*}
  &\rev{\calA_k(\Pi_{k+1})} := (I + B_k B_k^\top \Pi_{k+1})^{-1} \bara_k \\
  &= A_k - B_k (I + B_k^\top \Pi_{k+1} B_k)^{-1} (B_k^\top \Pi_{k+1} A_k + \cro_k^\top ) .
\end{align*}
Then, the covariance matrix $ \Sigma_k := \bbE[x_k^* (x_k^*)^\top] $ evolves as
\begin{align}
   \Sigma_{k+1} &= \calA_k(\Pi_{k+1}) \Sigma_k \calA_k(\Pi_{k+1})^\top \nonumber\\
   &\quad + B_k (I + B_k^\top \Pi_{k+1} B_k)^{-1} B_k^\top , \label{eq:sigma_evolution}
\end{align}
and if the initial distribution is a zero-mean Gaussian, then $ x_k^* \sim \calN(0,\Sigma_k) $.
Now, assume that the optimal state process $ \{x_k^*\} $ for Problem~\ref{prob:LQ_control} satisfies the density constraints~\eqref{eq:cov_constraint} of Problem~\ref{prob:density_control}. Then, the policy \eqref{eq:opt_policy} is the unique optimal solution to Problem~\ref{prob:density_control}. \cyan{Indeed, assume that there exists a policy that satisfies the density constraints~\eqref{eq:cov_constraint} and yields a lower cost \eqref{eq:cost} for Problem~\ref{prob:density_control} than the policy \eqref{eq:opt_policy}. Then, it also leads to a lower cost \eqref{eq:cost_lq} for Problem~\ref{prob:LQ_control} than \eqref{eq:opt_policy} because for any policy satisfying \eqref{eq:cov_constraint}, the terminal cost $ \bbE[x_\ft^\top Fx_\ft /2] $ takes the same value. This contradicts the optimality of \eqref{eq:opt_policy}. \rev{Additionally, assume that there exists a feasible policy of Problem~\ref{prob:density_control} that is different from the optimal policy \eqref{eq:opt_policy}, but attains the minimum value of \eqref{eq:cost}. Then, it also attains the minimum value of \eqref{eq:cost_lq} due to the final density constraint~\eqref{eq:cov_constraint}. This contradicts the uniqueness of the solution to Problem~\ref{prob:LQ_control}.}}
In summary, the unique optimal density control policy \fin{of} Problem~\ref{prob:density_control} can be obtained by finding a terminal value $ \Pi_\ft = F $ of \eqref{eq:riccati_general} such that for any $ k\in \bbra{0,\ft-1} $, $ I + B_k^\top \Pi_{k+1} B_k \succ 0 $ holds, and the solution $ \Sigma_k $ to \eqref{eq:sigma_evolution} satisfies $ \Sigma_0 = \bsigma_0 $, $\Sigma_\ft = \bsigma_\ft $.

Next, we \modi{make the change of variables} $ H_k := \Sigma_k^{-1} - \Pi_k $ as in \cite{Chen2016part1,Ito2022maximum}.
Here, the invertibility of $ \Sigma_k $ is ensured if \rev{$ \bara_k $} is invertible for any $ k\in \bbra{0,\ft-1} $ and $ \Sigma_0 \succ 0 $ because $ \calA_k(\Pi_{k+1}) $ is also invertible, \modi{and it is assumed that $ I + B_k^\top \Pi_{k+1} B_k \succ 0 $} for any $ k\in \bbra{0,\ft-1} $.
Hence, we assume the following condition.
\rev{\begin{assumption}\label{ass:invertibility}
    For any $ k\in \bbra{0,\ft-1} $, $ \bara_k $ in \eqref{eq:barA} is invertible.
    \hfill $ \diamondsuit $
\end{assumption}
}

\modi{By using} $ H_k $, the forward equation~\eqref{eq:sigma_evolution} is transformed into a backward Riccati equation.
The proof is given in Appendix~\ref{app:riccati}.
\begin{proposition}\label{prop:riccati}
   Let $ \varepsilon = 1 $, $R_k = I $ for any $ k\in \bbra{0,\ft-1} $. \modi{Suppose that Assumption~\ref{ass:invertibility} holds} and assume $ \Sigma_0 \succ 0 $.
   Let $ \{\Sigma_k\} $ and $ \{\Pi_k\} $ be solutions to \eqref{eq:sigma_evolution} and
   \begin{align}
    \Pi_k &= \bara_k^\top \Pi_{k+1} \bara_k - \bara_k^\top \Pi_{k+1} B_k (I + B_k^\top \Pi_{k+1}B_k)^{-1} \nonumber\\
  &\quad\times  B_k^\top \Pi_{k+1} \bara_k + \barq_k, ~~ k\in \bbra{0,\ft-1}. \label{eq:riccati}
 \end{align}
   Then for $ H_k := \Sigma_k^{-1} - \Pi_k $, it holds that
   \begin{align}\label{eq:H_riccati}
      H_k &= \bara_k^\top H_{k+1} \bara_k - \bara_k^\top H_{k+1} B_k (-I + B_k^\top H_{k+1} B_k)^{-1} \nonumber\\
      &\quad \times B_k^\top H_{k+1} \bara_k - \barq_k, ~~ k \in \bbra{0,\ft-1} .
   \end{align}
   \hfill $ \diamondsuit $
\end{proposition}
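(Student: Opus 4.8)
The plan is to invert the forward recursion \eqref{eq:sigma_evolution}, turning it into a backward recursion for $\Sigma_k^{-1}$, substitute $\Sigma_{k+1}^{-1} = H_{k+1} + \Pi_{k+1}$, and then subtract off $\Pi_k$ via the Riccati recursion \eqref{eq:riccati} so that what survives is exactly \eqref{eq:H_riccati}. Throughout I abbreviate $\calA_k := \calA_k(\Pi_{k+1})$ and introduce the feedback gain $L_k := (I + B_k^\top \Pi_{k+1} B_k)^{-1} B_k^\top \Pi_{k+1} \bara_k$. Before the main computation I would record three elementary facts: (i) $\Sigma_k \succ 0$ for every $k$, so that $H_k$ is well defined, which follows by induction on \eqref{eq:sigma_evolution} using $\Sigma_0 \succ 0$ and the invertibility of $\calA_k$, the latter guaranteed by Assumption~\ref{ass:invertibility} together with $I + B_k^\top \Pi_{k+1} B_k \succ 0$; (ii) the closed-loop identity $\calA_k = \bara_k - B_k L_k$ and its companion $B_k^\top \Pi_{k+1} \calA_k = L_k$, both being one-line manipulations of the definitions of $\calA_k(\Pi_{k+1})$ and $\bara_k$ in \eqref{eq:barA}; and (iii) the completed-square form of \eqref{eq:riccati},
\[
\Pi_k = \calA_k^\top \Pi_{k+1} \calA_k + L_k^\top L_k + \barq_k ,
\]
obtained from \eqref{eq:riccati} by the usual Riccati algebra (recall $R_k = I$ here).

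Next I would invert \eqref{eq:sigma_evolution}. Writing $N_k := B_k(I + B_k^\top \Pi_{k+1} B_k)^{-1} B_k^\top$, equation \eqref{eq:sigma_evolution} reads $\Sigma_{k+1} - N_k = \calA_k \Sigma_k \calA_k^\top \succ 0$, and since $\calA_k$ is invertible this gives $\Sigma_k^{-1} = \calA_k^\top (\Sigma_{k+1} - N_k)^{-1} \calA_k$. Applying the matrix inversion lemma to $(\Sigma_{k+1} - N_k)^{-1}$ and substituting $\Sigma_{k+1}^{-1} = H_{k+1} + \Pi_{k+1}$, the central matrix collapses because $I + B_k^\top \Pi_{k+1} B_k - B_k^\top \Sigma_{k+1}^{-1} B_k = I - B_k^\top H_{k+1} B_k$, yielding
\[
\Sigma_k^{-1} = \calA_k^\top \Sigma_{k+1}^{-1} \calA_k - \calA_k^\top \Sigma_{k+1}^{-1} B_k (-I + B_k^\top H_{k+1} B_k)^{-1} B_k^\top \Sigma_{k+1}^{-1} \calA_k .
\]
As a by-product, the matrix inversion lemma certifies that $-I + B_k^\top H_{k+1} B_k$ is invertible, so \eqref{eq:H_riccati} is meaningful.

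It then remains to simplify. Using $\Sigma_{k+1}^{-1} = H_{k+1} + \Pi_{k+1}$ together with $B_k^\top \Pi_{k+1} \calA_k = L_k$ and $\calA_k = \bara_k - B_k L_k$, I would rewrite $B_k^\top \Sigma_{k+1}^{-1} \calA_k = B_k^\top H_{k+1} \bara_k - (-I + B_k^\top H_{k+1} B_k) L_k$ and split $\calA_k^\top \Sigma_{k+1}^{-1} \calA_k = \calA_k^\top H_{k+1} \calA_k + \calA_k^\top \Pi_{k+1} \calA_k$, replacing the second summand by $\Pi_k - L_k^\top L_k - \barq_k$ from (iii). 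Substituting into the displayed formula for $\Sigma_k^{-1}$ and subtracting $\Pi_k$ to isolate $H_k = \Sigma_k^{-1} - \Pi_k$, all terms containing $L_k$ cancel in pairs, leaving precisely
\[
H_k = \bara_k^\top H_{k+1} \bara_k - \bara_k^\top H_{k+1} B_k (-I + B_k^\top H_{k+1} B_k)^{-1} B_k^\top H_{k+1} \bara_k - \barq_k .
\]

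I expect the cancellations in this last step to be the only delicate part: expanding $\calA_k^\top H_{k+1}\calA_k$ and the outer-product term produces several quadratic forms in $L_k$, and one has to keep straight the appearances of $I - B_k^\top H_{k+1}B_k$ versus $-I + B_k^\top H_{k+1}B_k$. The identity $B_k^\top \Pi_{k+1} \calA_k = L_k$ is exactly what makes the cross terms line up so that the net $L_k$-contribution vanishes. By comparison, inverting \eqref{eq:sigma_evolution} and the matrix-inversion-lemma step are routine once $\Sigma_k \succ 0$ and invertibility of $\calA_k$ are established.
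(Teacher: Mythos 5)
Your proposal is correct and follows essentially the same route as the paper: invert \eqref{eq:sigma_evolution} using the invertibility of $\calA_k(\Pi_{k+1})$, apply the matrix inversion lemma so that the central block collapses to $-I+B_k^\top H_{k+1}B_k$ via $\Sigma_{k+1}^{-1}=H_{k+1}+\Pi_{k+1}$, and then simplify. The only difference is bookkeeping in the final expansion — you organize the cancellations around the gain $L_k$ and the completed-square (Joseph) form $\Pi_k=\calA_k^\top\Pi_{k+1}\calA_k+L_k^\top L_k+\barq_k$, whereas the paper expands directly into the terms $F_1,\dots,F_4$ and recombines them with resolvent identities; both lead to the same identity \eqref{eq:H_riccati}.
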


Now, the boundary conditions $ \Sigma_0 = \bsigma_0 $, $\Sigma_\ft = \bsigma_\ft $ are transformed into
\begin{equation}\label{eq:boundary}
	\bar{\Sigma}_0^{-1} =  \Pi_0 + H_0, \ \bar{\Sigma}_\ft^{-1} =  \Pi_\ft + H_\ft.
\end{equation}
We summarize the above discussion in the following.
\begin{proposition}\label{prop:opt}
	Let $ \varepsilon = 1$, $R_k = I $ for any $ k\in \bbra{0,\ft-1} $.
    \modi{Suppose that Assumption~\ref{ass:invertibility} holds} and assume that $ \{\Pi_k\} $ and $ \{H_k\} $ satisfy the Riccati equations \eqref{eq:riccati},~\eqref{eq:H_riccati}
	 with the boundary conditions~\eqref{eq:boundary}. Assume further that $ I + B_k^\top \Pi_{k+1} B_k \succ 0 $ for any $ k\in \bbra{0,\ft-1} $. Then, the policy \eqref{eq:opt_policy} is the unique optimal policy of Problem~\ref{prob:density_control}.
   \hfill $ \diamondsuit $
\end{proposition}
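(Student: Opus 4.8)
The plan is to reduce Problem~\ref{prob:density_control} to the auxiliary MaxEnt LQ control problem (Problem~\ref{prob:LQ_control}) and then invoke the reduction argument already spelled out in the text preceding the statement. First I would set $F := \Pi_\ft$, so that $\{\Pi_k\}$ is precisely the solution of \eqref{eq:riccati_general} (with $R_k = I$, $\varepsilon = 1$) with terminal condition $\Pi_\ft = F$; since $I + B_k^\top \Pi_{k+1} B_k \succ 0$ by hypothesis, Proposition~\ref{prop:LQ} applies and shows that \eqref{eq:opt_policy} is the unique optimal policy of Problem~\ref{prob:LQ_control} for this $F$, with optimal closed-loop dynamics $x_{k+1}^* = \calA_k(\Pi_{k+1}) x_k^* + B_k w_k^*$, $w_k^* \sim \calN(0, (I + B_k^\top \Pi_{k+1} B_k)^{-1})$. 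Starting from $x_0 \sim \calN(0, \bsigma_0)$, the process $\{x_k^*\}$ is Gaussian with $x_k^* \sim \calN(0, \Sigma_k)$, where $\{\Sigma_k\}$ solves \eqref{eq:sigma_evolution} with $\Sigma_0 = \bsigma_0$. As noted before the statement, it then suffices to verify that $\{x_k^*\}$ satisfies the density constraints \eqref{eq:cov_constraint}, i.e. $\Sigma_\ft = \bsigma_\ft$ (the constraint $\Sigma_0 = \bsigma_0$ holding by construction): any feasible policy of Problem~\ref{prob:density_control} is feasible for Problem~\ref{prob:LQ_control} and incurs the same terminal cost $\bbE[x_\ft^\top F x_\ft / 2]$, so optimality and uniqueness transfer from Problem~\ref{prob:LQ_control}.

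The core of the argument is thus to prove $\Sigma_\ft = \bsigma_\ft$. I would first note that $\calA_k(\Pi_{k+1}) = (I + B_k B_k^\top \Pi_{k+1})^{-1}\bara_k$ is invertible, because $\det(I + B_k B_k^\top \Pi_{k+1}) = \det(I + B_k^\top \Pi_{k+1} B_k) > 0$ and $\bara_k$ is invertible by Assumption~\ref{ass:invertibility}; together with $\Sigma_0 = \bsigma_0 \succ 0$ and \eqref{eq:sigma_evolution}, this yields $\Sigma_k \succ 0$ for all $k$. Define $\hat H_k := \Sigma_k^{-1} - \Pi_k$. By Proposition~\ref{prop:riccati}, $\{\hat H_k\}$ satisfies \eqref{eq:H_riccati} (and the matrix inverses appearing there exist for this sequence), while the boundary condition \eqref{eq:boundary} gives $\hat H_0 = \bsigma_0^{-1} - \Pi_0 = H_0$. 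Hence $\{\hat H_k\}$ and $\{H_k\}$ are two solutions of \eqref{eq:H_riccati} that coincide at $k = 0$.

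The hard part is to show that such solutions coincide everywhere, i.e. that the right-hand side of \eqref{eq:H_riccati}, viewed as a map $\Phi_k$ on the symmetric matrices $X$ with $-I + B_k^\top X B_k$ invertible, is injective. I would establish this with a short chain of matrix identities: using $U(I + VU)^{-1} = (I + UV)^{-1}U$ one rewrites $X - X B_k(-I + B_k^\top X B_k)^{-1}B_k^\top X = (I - X B_k B_k^\top)^{-1}X =: Y$, hence $(I - X B_k B_k^\top)Y = X$, i.e. $Y = X(I + B_k B_k^\top Y)$; since $I + B_k B_k^\top Y$ is invertible (its determinant equals $\det(I - X B_k B_k^\top) = \det(-(-I + B_k^\top X B_k)) \ne 0$), this gives $X = Y(I + B_k B_k^\top Y)^{-1}$, so $X$ is determined by $Y$, and as $\bara_k$ is invertible, $\Phi_k(X) = \bara_k^\top Y \bara_k - \barq_k$ recovers $Y$ and hence $X$. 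Applying this injectivity inductively from $k = 0$ gives $\hat H_k = H_k$ for all $k \in \bbra{0, \ft}$, and in particular, using \eqref{eq:boundary} once more, $\Sigma_\ft^{-1} = \Pi_\ft + \hat H_\ft = \Pi_\ft + H_\ft = \bsigma_\ft^{-1}$, so $\Sigma_\ft = \bsigma_\ft$, which completes the proof. The only point requiring care is that all the inverses invoked in these manipulations are legitimate: for $\{\hat H_k\}$ this is supplied by Proposition~\ref{prop:riccati}, and for $\{H_k\}$ it is built into the hypothesis that \eqref{eq:H_riccati} holds.
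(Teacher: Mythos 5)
Your proof is correct and follows the same reduction as the paper: fix $F=\Pi_\ft$, invoke Proposition~\ref{prop:LQ}, transfer optimality and uniqueness via the terminal-cost argument given in the text, and verify the terminal covariance constraint through the change of variables $H_k=\Sigma_k^{-1}-\Pi_k$. The one place where you go beyond what the paper writes down is the injectivity argument for the map $X\mapsto \bara_k^\top\bigl(X-XB_k(-I+B_k^\top XB_k)^{-1}B_k^\top X\bigr)\bara_k-\barq_k$. The paper's Proposition~\ref{prop:riccati} only establishes the forward direction (a solution of \eqref{eq:sigma_evolution} yields a solution of \eqref{eq:H_riccati}) and then treats the transformation of the boundary conditions as an equivalence, implicitly relying on the recursion \eqref{eq:H_riccati} being uniquely solvable forward in time (this is exactly the point flagged in Remark~\ref{rmk:invertibility} about the invertibility of $\bara_k$). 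Your explicit chain of identities, showing $Y=(I-XB_kB_k^\top)^{-1}X$ determines $X=Y(I+B_kB_k^\top Y)^{-1}$, supplies that missing uniqueness step cleanly; an equivalent alternative would have been to observe that every step in the Appendix~\ref{app:riccati} computation is reversible. One cosmetic slip: $\det(I+B_kB_k^\top Y)$ equals $1/\det(I-XB_kB_k^\top)$ rather than $\det(I-XB_kB_k^\top)$ itself (push the factor through Sylvester's identity), but since only nonvanishing of the determinant is needed, the conclusion stands.
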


\rev{\begin{rmk}\label{rmk:invertibility}
    \fin{In this paper}, the transformation of the evolution equation \eqref{eq:sigma_evolution} of $ \Sigma_k $ into the backward Riccati equation \eqref{eq:H_riccati} is crucial. If $ \bara_k $ is singular, $ \Sigma_{k} $ is not uniquely determined for a given $ \Sigma_{k+1} $ in general.
    Therefore, for the backward analysis of $ \Sigma_k $, the invertibility of $ \bara_k $ is essential.
    Note that $ \bara_k $ is expected to be invertible in many applications.
	For example when $ \cro_k = 0 $, if the system \eqref{eq:system} is obtained by a zero-order hold discretization of a continuous-time system, $ \bara_k = A_k $ is always invertible.
		The invertibility of $ A_k $ is also assumed in the literature on covariance steering~\cite{Liu2022,Balci2023multi,Balci2022exact}.
        \hfill $ \diamondsuit $
\end{rmk}}

\section{Solution to MaxEnt Density Control Problem}\label{sec:solution}
In this section, we derive the solutions to the Riccati equations \eqref{eq:riccati},~\eqref{eq:H_riccati} coupled through the boundary conditions~\eqref{eq:boundary} to obtain the optimal policy of Problem~\ref{prob:density_control} \modi{based on Proposition~\ref{prop:opt}}.
\rev{First, we transform the Riccati equations into coupled linear equations using a symplectic matrix (Proposition~\ref{prop:linear}). Then, we derive the explicit solutions to the linear equations, from which the solutions to the coupled Riccati equations can be recovered (Proposition~\ref{thm:riccati_solution}). In addition, Proposition~\ref{prop:positive} verifies that one of the obtained solutions fulfills the condition $ I + B_k^\top \Pi_{k+1} B_k \succ 0 $ required in Proposition~\ref{prop:opt} for the construction of the optimal policy. As a result, we obtain the closed-form expression of the optimal policy of Problem~\ref{prob:density_control} (Theorem~\ref{thm:opt_policy}).}

\modi{Let} us introduce $ X_k, Y_k, \widehat{X}_k, \widehat{Y}_k \in \bbR^{n\times n} $ satisfying for any $ k \in \bbra{0,\ft-1} $,
\begin{align}
	\begin{bmatrix}
		X_{k+1} \\ Y_{k+1}
	\end{bmatrix}
	&=\underbrace{
	\begin{bmatrix}
		\bara_k + G_k \bara_k^{-\top} \barq_k & - G_k \bara_k^{-\top} \\
		- \bara_k^{-\top} \barq_k & \bara_k^{-\top}
	\end{bmatrix}}_{=: M_k}
	\begin{bmatrix}
		X_k \\ Y_k
	\end{bmatrix}, \label{eq:XY}\\
   \begin{bmatrix}
		\what{X}_{k+1} \\ \what{Y}_{k+1}
	\end{bmatrix}
	&=
	M_k
	\begin{bmatrix}
		\what{X}_k \\ \what{Y}_k
	\end{bmatrix} , \label{eq:XYhat}
\end{align}
where $ G_k := B_k B_k^\top $. 
\fin{Note that the solutions to \eqref{eq:XY},~\eqref{eq:XYhat} are uniquely determined by terminal conditions $ (X_\ft,Y_\ft) $, $(\what{X}_\ft, \what{Y}_\ft) $ because} \modi{the symplectic matrix} $ M_k $ has the inverse
\begin{equation}\label{eq:M_inv}
    M_{k}^{-1} =
    \begin{bmatrix}
        \bara_{k}^{-1} & \bara_{k}^{-1} G_{k} \\
        \barq_{k} \bara_{k}^{-1} & ~ \barq_{k} \bara_{k}^{-1} G_{k} + \bara_{k}^\top
    \end{bmatrix} .
\end{equation}
The relationship between the linear equations~\eqref{eq:XY},~\eqref{eq:XYhat} and the Riccati equations~\eqref{eq:riccati},~\eqref{eq:H_riccati} is given as follows. The proof is provided in Appendix~\ref{app:linear}.
\begin{proposition}\label{prop:linear}
    \modi{Suppose that Assumption~\ref{ass:invertibility} holds.}
    Then, the following hold:
    \begin{itemize}
    \item[i)] Assume that $ \{\Pi_k\}_{k=0}^\ft$ and $\{H_k\}_{k=0}^\ft $ satisfy \eqref{eq:riccati},~\eqref{eq:H_riccati}.
    Let $ \{X_k,Y_k\}_{k=0}^\ft $ and $ \{\what{X}_k, \what{Y}_k\}_{k=0}^\ft $ be the solutions to \eqref{eq:XY},~\eqref{eq:XYhat} with terminal conditions $ (X_\ft,Y_\ft) $, $ (\what{X}_\ft, \what{Y}_\ft) $ satisfying $ \Pi_\ft = Y_\ft X_\ft^{-1} $, $ H_\ft = - \what{X}_\ft^{-\top} \what{Y}_\ft^{\top} $.
    Then, for any $ k \in \bbra{0,\ft} $, $ X_k $ and $ \what{X}_k $ are invertible, and it holds \fin{that} $ \Pi_{k} = Y_k X_k^{-1} $, $ H_k = - \what{X}_k^{-\top} \what{Y}_k^{\top} $.
    \item[ii)]
    \cyan{Let $ \{X_k,Y_k\}_{k=0}^\ft $ and $ \{\what{X}_k, \what{Y}_k\}_{k=0}^\ft $ be solutions to \eqref{eq:XY},~\eqref{eq:XYhat}, and assume that for any $ k\in \bbra{0,\ft} $, $ X_k $ and $ \what{X}_k $ are invertible. Then, $ \{\Pi_k\}_{k=0}^\ft $ and $ \{H_k\}_{k=0}^\ft $ given by $ \Pi_{k} := Y_k X_k^{-1} $, $ H_k := - \what{X}_k^{-\top} \what{Y}_k^{\top} $ are solutions to \eqref{eq:riccati},~\eqref{eq:H_riccati}, respectively.}
    \hfill $ \diamondsuit $
    \end{itemize}
\end{proposition}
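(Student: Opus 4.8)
The plan is to establish the standard correspondence between the Riccati recursion and the linear (symplectic) recursion, treating the pair $(\Pi_k)$ and $(H_k)$ in parallel since $(H_k)$ solves a Riccati equation of the same shape as $(\Pi_k)$ but with $-\barq_k$ in place of $\barq_k$ and $-I$ in place of $I$ in the inverted term. The first task is a bookkeeping lemma: rewrite the Riccati update \eqref{eq:riccati} in the ``closed-loop'' form $\Pi_k = \bara_k^\top \Pi_{k+1}(I + G_k \Pi_{k+1})^{-1}\bara_k + \barq_k$, using $G_k = B_k B_k^\top$ and the push-through identity $\Pi_{k+1}B_k(I+B_k^\top\Pi_{k+1}B_k)^{-1}B_k^\top = \Pi_{k+1}(I+G_k\Pi_{k+1})^{-1}G_k$; one must note along the way that $I+G_k\Pi_{k+1}$ is invertible whenever $I+B_k^\top\Pi_{k+1}B_k$ is — which, for part~i), follows because $\Pi_{k+1}=Y_{k+1}X_{k+1}^{-1}$ will be shown symmetric, so the relevant matrices are genuinely positive definite; more carefully, one can argue invertibility of $I+G_k\Pi_{k+1}$ directly from the linear recursion once $X_{k+1}$ is known invertible.

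For part~i) I would run a downward induction on $k$ from $\ft$. The base case $k=\ft$ is the hypothesis $\Pi_\ft = Y_\ft X_\ft^{-1}$, $X_\ft$ invertible (which is implicitly required for the hypothesis to make sense). For the inductive step, suppose $X_{k+1}$ is invertible and $\Pi_{k+1} = Y_{k+1}X_{k+1}^{-1}$. From the top block of \eqref{eq:XY} reading it backwards via $M_k^{-1}$ in \eqref{eq:M_inv}, one gets $X_k = \bara_k^{-1}(X_{k+1} + G_k Y_{k+1}) = \bara_k^{-1}(I + G_k\Pi_{k+1})X_{k+1}$, and $Y_k = \barq_k\bara_k^{-1}(X_{k+1}+G_kY_{k+1}) + \bara_k^\top Y_{k+1} = \barq_k X_k + \bara_k^\top \Pi_{k+1}X_{k+1}$. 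Invertibility of $X_k$ then reduces to invertibility of $\bara_k^{-1}(I+G_k\Pi_{k+1})$: $\bara_k$ is invertible by Assumption~\ref{ass:invertibility}, and $I+G_k\Pi_{k+1}$ is invertible by the push-through argument together with the assumed $I + B_k^\top\Pi_{k+1}B_k \succ 0$ — here I should be a bit careful, since Proposition~\ref{prop:linear} as stated does not assume $I+B_k^\top\Pi_{k+1}B_k\succ 0$; instead I would argue that if $X_k$ were singular there would be $v\neq 0$ with $(I+G_k\Pi_{k+1})X_{k+1}v = 0$, hence $X_{k+1}v = -G_kY_{k+1}v$, and then tracking this through shows the solution $(X_k,Y_k)$ degenerates in a way incompatible with $\Pi_k = Y_kX_k^{-1}$ being a solution of \eqref{eq:riccati} (alternatively, the hypothesis that $\{\Pi_k\}$ solves \eqref{eq:riccati} already forces invertibility, and one uses the linear recursion only to identify the value). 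Granting invertibility of $X_k$, compute $\Pi_k = Y_k X_k^{-1} = \barq_k + \bara_k^\top\Pi_{k+1}X_{k+1}X_k^{-1} = \barq_k + \bara_k^\top \Pi_{k+1}(I+G_k\Pi_{k+1})^{-1}\bara_k$, which is exactly the closed-loop form of \eqref{eq:riccati}. The computation for $H_k$ is identical in structure: writing $H_k = -\what{X}_k^{-\top}\what{Y}_k^\top$ amounts to $-H_k^\top = \what{Y}_k^\top\what{X}_k^{-\top}$, i.e. $-H_k = \what{X}_k^{-1}\what{Y}_k$ after transposing (using that $H_k$ will be symmetric), so the same symplectic recursion with the sign flip in the cost produces \eqref{eq:H_riccati}; I would spell out the sign bookkeeping once and then invoke it.

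For part~ii) the logic is nearly the converse and actually shorter: given that $X_k,\what{X}_k$ are invertible for all $k$, define $\Pi_k := Y_kX_k^{-1}$, $H_k := -\what{X}_k^{-\top}\what{Y}_k^\top$, and verify directly that these satisfy the recursions by substituting the linear updates \eqref{eq:XY},~\eqref{eq:XYhat}. Concretely, $X_{k+1} = (\bara_k + G_k\bara_k^{-\top}\barq_k)X_k - G_k\bara_k^{-\top}Y_k = (\bara_k + G_k\bara_k^{-\top}\barq_k - G_k\bara_k^{-\top}\Pi_k)X_k$, and $Y_{k+1} = -\bara_k^{-\top}\barq_k X_k + \bara_k^{-\top}Y_k = \bara_k^{-\top}(\Pi_k - \barq_k)X_k$; hence $\Pi_{k+1} = \bara_k^{-\top}(\Pi_k-\barq_k)[\bara_k + G_k\bara_k^{-\top}(\barq_k - \Pi_k)]^{-1}$, and inverting this relation (a Möbius/linear-fractional manipulation) returns the closed-loop form of \eqref{eq:riccati}; the same manipulation with the sign flip handles \eqref{eq:H_riccati}. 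One should also check symmetry of $\Pi_k$ and $H_k$: this follows because $M_k$ is symplectic with respect to the standard form $J=\begin{bmatrix}0 & I \\ -I & 0\end{bmatrix}$, so $\begin{bmatrix}X_k \\ Y_k\end{bmatrix}^\top J \begin{bmatrix}X_k \\ Y_k\end{bmatrix} = X_k^\top Y_k - Y_k^\top X_k$ is conserved; if it vanishes at $k=\ft$ (which it does when $\Pi_\ft$ is symmetric) it vanishes for all $k$, giving $X_k^\top Y_k$ symmetric, hence $\Pi_k = (X_k^\top)^{-1}(X_k^\top Y_k)X_k^{-1}$ symmetric, and analogously for $H_k$.

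The main obstacle is the invertibility argument in part~i): the statement of Proposition~\ref{prop:linear} only postulates that $\{\Pi_k\},\{H_k\}$ solve the Riccati equations and fixes the terminal values of the linear variables, so one must show $X_k$ stays invertible without a priori knowing $I+G_k\Pi_{k+1}$ is invertible. The clean way around this is to do the induction so that at each step one simultaneously proves $X_k$ invertible \emph{and} $\Pi_k = Y_kX_k^{-1}$: assuming $\Pi_{k+1}=Y_{k+1}X_{k+1}^{-1}$ with $X_{k+1}$ invertible, the hypothesis that $\Pi_k$ solves \eqref{eq:riccati} — in its closed-loop form — implicitly asserts $I+G_k\Pi_{k+1}$ is invertible (otherwise the right-hand side is not well-defined), and from $X_k = \bara_k^{-1}(I+G_k\Pi_{k+1})X_{k+1}$ plus Assumption~\ref{ass:invertibility} one then reads off invertibility of $X_k$. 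Everything else is routine matrix algebra (push-through identity, block inversion, symplectic symmetry), which I would present compactly and relegate any longer computation to the appendix as the excerpt already signals.
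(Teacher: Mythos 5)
Your proposal is correct and follows essentially the same route as the paper's proof in Appendix~B: a downward induction using the factored form of $M_k$ (equivalently \eqref{eq:M_inv}) to get $X_k=\bara_k^{-1}(I+G_k\Pi_{k+1})X_{k+1}$, with invertibility of $I+G_k\Pi_{k+1}$ extracted via Sherman--Morrison--Woodbury from the fact that the Riccati recursion \eqref{eq:riccati} is well defined only if $I+B_k^\top\Pi_{k+1}B_k$ is invertible, followed by the computation $Y_kX_k^{-1}=\barq_k+\bara_k^\top\Pi_{k+1}(I+G_k\Pi_{k+1})^{-1}\bara_k=\Pi_k$ and the analogous sign-flipped argument for $H_k$ (where, as the paper implicitly does, the transpose--negation $\Theta_k:=-H_k^\top$ satisfies \eqref{eq:riccati} itself, so no separate symmetry argument is actually needed). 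Part ii) is likewise the same direct substitution the paper invokes; your extra symplectic-invariant symmetry check is harmless but not required for the statement.
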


We can always use the above expressions of $ \Pi_k, H_k $ for example by setting $ (X_\ft,Y_\ft) = (I,\Pi_\ft) $, $ (\what{X}_\ft, \what{Y}_\ft) = (I, -H_\ft) $.
Now, the boundary conditions~\eqref{eq:boundary} are rewritten as
\begin{align}
	&\bar{\Sigma}_0^{-1} = Y_0 X_0^{-1} - \what{X}_0^{-\top} \what{Y}_0^\top, \label{eq:boundary2_1} \\
	&\bar{\Sigma}_\ft^{-1} = Y_\ft X_\ft^{-1} - \what{X}_\ft^{-\top} \what{Y}_\ft^\top. \label{eq:boundary2_2} 
\end{align}
We analyze the Riccati equations~\eqref{eq:riccati},~\eqref{eq:H_riccati} with the boundary conditions~\eqref{eq:boundary} via the linear equations~\eqref{eq:XY}, \eqref{eq:XYhat} coupled through \eqref{eq:boundary2_1}, \eqref{eq:boundary2_2}.

Before stating the result, we establish some notations. Define the state-transition matrix for $ \{M_k\} $:
\begin{equation}\label{eq:transition_M}
	\Phi_M (k,l) :=
	\begin{cases}
		M_{k-1} M_{k-2} \cdots M_l, & k > l ,\\
		I, & k=l, \\
		M_k^{-1} M_{k+1}^{-1} \cdots M_{l-1}^{-1}, & k < l .
	\end{cases}
\end{equation}
Similarly, the state-transition matrix for $ \{\bara_k\} $ is denoted by $ \Phi_\bara(k,l) $.
We split $ \Phi_M (k,l) $ into $ n\times n $ matrices:
\begin{equation}\label{eq:split}
	\Phi_M (k,l) =
	\begin{bmatrix}
		\Phi_{11} (k,l) & \Phi_{12}(k,l) \\
		\Phi_{21} (k,l) & \Phi_{22}(k,l)
	\end{bmatrix} .
\end{equation}
In particular, $\Phi_{ij} (\ft,0)$, $\Phi_{ij} (0,\ft) $ ($ i,j = 1,2 $) are denoted by $ \phi_{ij}, \varphi_{ij} $, respectively.
Let us introduce the reachability Gramian:
\begin{equation}\label{eq:reachability}
	\calR(k_1,k_0) := \sum_{k = k_0}^{k_1-1} \Phi_{\rev{\bara}}(k_1,k+1) G_k \Phi_\bara(k_1,k+1)^\top , \ k_1 > k_0.
\end{equation}
For the analysis of the coupled Riccati equations, we assume the following \modi{reachability} condition, \rev{whose intuition will be explained in Remark~\ref{rmk:reachability}.}
\begin{assumption}\label{ass:reachability}
	There exists $ k_{\rmr} \in \bbra{1,\ft} $ such that $ \calR(k,0) $ is invertible for any $ k\in \bbra{k_\rmr,\ft} $ and $ \calR(\ft,k) $ is invertible for any \gram{$ k\in \bbra{0,k_\rmr} $}.
	\hfill$ \diamondsuit $
\end{assumption}

Now, we give the solutions to the Riccati equations \eqref{eq:riccati}--\eqref{eq:boundary}.
The proof is shown in Appendix~\ref{app:solution_riccati}.
\begin{proposition}\label{thm:riccati_solution}
	Suppose that Assumptions~\ref{ass:invertibility},~\ref{ass:reachability} hold.
	Define
	\begin{align}
		&Y_{\ft,\pm} := - \varphi_{12}^{-1} \varphi_{11} \nonumber\\
      &\pm \bsigma_\ft^{-1/2} \left( \frac{1}{4}I + \bsigma_\ft^{1/2} \varphi_{12}^{-1} \bsigma_0 \varphi_{12}^{-\top} \bsigma_\ft^{1/2}  \right)^{1/2} \bsigma_\ft^{-1/2} + \frac{1}{2} \bsigma_\ft^{-1} , \label{eq:Y_terminal} \\
			&Y_{0,\mp} := -\phi_{12}^{-1} \phi_{11} \nonumber\\
			&\pm \bsigma_0^{-1/2} \biggl( \frac{1}{4}I + \bsigma_0^{1/2} \phi_{12}^{-1} \bsigma_\ft \phi_{12}^{-\top} \bsigma_0^{1/2}  \biggr)^{1/2} \bsigma_0^{-1/2} + \frac{1}{2} \bsigma_0^{-1} . \label{eq:Y_0_pm}
	\end{align}
	Then, the Riccati equations~\eqref{eq:riccati},~\eqref{eq:H_riccati} with the boundary conditions~\eqref{eq:boundary} have at least one and at most two solutions.
    Specifically, \eqref{eq:riccati}--\eqref{eq:boundary} have a solution $ \{\Pi_k, H_k\} = \{\Pi_{k,+}, H_{k,+}\} $ specified by the terminal values:
	\begin{align}
		&\Pi_{\ft} = Y_{\ft,+}, \label{eq:Pi_terminal}\\
		&H_{\ft} = \bsigma_\ft^{-1} - Y_{\ft,+} . \label{eq:H_terminal}
	\end{align}
	In addition, $ \Pi_{k,+} $ satisfies $ \Pi_{0,+} = Y_{0,+} $.
	\cyan{Assume further that the equations~\eqref{eq:riccati},~\eqref{eq:H_riccati} with the terminal conditions~$ \Pi_\ft = Y_{\ft,-} $, $H_\ft = \bsigma_\ft^{-1} - Y_{\ft,-} $ have solutions $ \{\Pi_k\} = \{\Pi_{k,-}\} $, $ \{H_k\} = \{H_{k,-}\} $ on the time interval $ \bbra{0,\ft} $, respectively. Then, $ \{\Pi_{k,-}, H_{k,-}\} $ satisfies the boundary conditions~\eqref{eq:boundary}.}
	\hfill $ \diamondsuit $
\end{proposition}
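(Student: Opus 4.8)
The plan is to recast the coupled Riccati boundary-value problem as the linear equations \eqref{eq:XY}, \eqref{eq:XYhat} via Proposition~\ref{prop:linear}, exploit the symplecticity of $M_k$, and reduce everything to a single algebraic matrix equation for the terminal value $\Pi_\ft$ that is solvable by completing the square.

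First I would fix the terminal normalization $X_\ft=\what{X}_\ft=I$, so $Y_\ft=\Pi_\ft$ and, by \eqref{eq:boundary2_2}, $\what{Y}_\ft=-H_\ft=\Pi_\ft-\bsigma_\ft^{-1}$, with $\Pi_k=Y_kX_k^{-1}$, $H_k=-\what{X}_k^{-\top}\what{Y}_k^{\top}$ along the $M_k$-flow (Proposition~\ref{prop:linear}). A direct check gives $M_k^{\top}JM_k=J$ for the standard $2n\times 2n$ symplectic form $J$ (using $G_k=G_k^{\top}$, $\barq_k=\barq_k^{\top}$), so $\what{X}_k^{\top}Y_k-\what{Y}_k^{\top}X_k$ is constant in $k$, hence equals $\bsigma_\ft^{-1}$; thus $\Sigma_k^{-1}=\Pi_k+H_k=\what{X}_k^{-\top}(\what{X}_k^{\top}Y_k-\what{Y}_k^{\top}X_k)X_k^{-1}=\what{X}_k^{-\top}\bsigma_\ft^{-1}X_k^{-1}$, i.e. $\Sigma_k=X_k\bsigma_\ft\what{X}_k^{\top}$. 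Linearity of the flow also gives $\what{X}_k=X_k-\Phi_{12}(k,\ft)\bsigma_\ft^{-1}$, so $\Sigma_k=X_k\bsigma_\ft X_k^{\top}-X_k\Phi_{12}(k,\ft)^{\top}$; at $k=0$ the remaining boundary condition $\Sigma_0=\bsigma_0$ then becomes the quadratic matrix equation $X_0\bsigma_\ft X_0^{\top}-X_0\varphi_{12}^{\top}=\bsigma_0$ in $X_0=\varphi_{11}+\varphi_{12}\Pi_\ft$.

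To solve it, Assumptions~\ref{ass:invertibility},~\ref{ass:reachability} make $\varphi_{12}$ (and $\phi_{12}$) invertible, via the relation between the $(1,2)$-block of $\Phi_M$ and the reachability Gramian $\calR$ (up to a state-transition factor), and symplecticity of $\Phi_M(0,\ft)$ makes $\varphi_{12}^{-1}\varphi_{11}$ symmetric. Putting the symmetric matrix $W:=\varphi_{12}^{-1}X_0=\Pi_\ft+\varphi_{12}^{-1}\varphi_{11}$, the equation collapses to $W\bsigma_\ft W-W=\varphi_{12}^{-1}\bsigma_0\varphi_{12}^{-\top}$; conjugating by $\bsigma_\ft^{1/2}$ and setting $\wtilde{W}:=\bsigma_\ft^{1/2}W\bsigma_\ft^{1/2}$, $\wtilde{N}:=\bsigma_\ft^{1/2}\varphi_{12}^{-1}\bsigma_0\varphi_{12}^{-\top}\bsigma_\ft^{1/2}\succ0$, this is exactly $(\wtilde{W}-\tfrac{1}{2}I)^{2}=\tfrac{1}{4}I+\wtilde{N}$. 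Taking symmetric square roots, $\wtilde{W}=\tfrac{1}{2}I\pm(\tfrac{1}{4}I+\wtilde{N})^{1/2}$, and undoing the substitutions reproduces $Y_{\ft,\pm}$ of \eqref{eq:Y_terminal} (the corresponding $\Pi_\ft$, $H_\ft=\bsigma_\ft^{-1}-\Pi_\ft$ being \eqref{eq:Pi_terminal}, \eqref{eq:H_terminal} for the $+$ choice). Repeating the computation from the other endpoint—normalizing at $k=0$ and propagating to $\ft$, so the analogue is written with $\phi_{ij}=\Phi_{ij}(\ft,0)$—gives $Y_{0,\mp}$ of \eqref{eq:Y_0_pm}.

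The step I expect to be the main obstacle is the claim of \emph{at most two} solutions: among the generally many symmetric square roots of $\tfrac{1}{4}I+\wtilde{N}$, only the extreme choices $\wtilde{W}-\tfrac{1}{2}I=\pm(\tfrac{1}{4}I+\wtilde{N})^{1/2}$ yield $\{\Pi_k\}$, $\{H_k\}$ defined on all of $\bbra{0,\ft}$. By Proposition~\ref{prop:linear} this is the requirement that $X_k$ and $\what{X}_k$ stay invertible for every $k$, and Assumption~\ref{ass:reachability} is what lets us control it: for $k\ge k_\rmr$ the block $\Phi_{12}(k,0)$ is invertible and for $k\le k_\rmr$ the block $\Phi_{12}(k,\ft)$ is invertible, so on each side invertibility of $X_k$ (resp. $\what{X}_k$) reduces to nonsingularity of a symmetric matrix depending affinely on the relevant endpoint value of $\Pi$ (resp. $H$), and a monotonicity/comparison argument in $k$ shows a mixed sign pattern for $\wtilde{W}-\tfrac{1}{2}I$ forces a conjugate point where such a matrix degenerates. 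For the $+$ branch, $\wtilde{W}\succeq I$, hence $W\succeq\bsigma_\ft^{-1}\succ0$ and $W-\bsigma_\ft^{-1}\succ0$; this anchors the comparison, keeps $X_k,\what{X}_k$ invertible throughout, and so produces a genuine solution, establishing existence of at least one. Moreover, writing the same objects from the $0$-end and using that the two normalizations differ by $X_k'=X_kX_0^{-1}$ together with the symplectic identity $\varphi_{12}=-\phi_{12}^{\top}$ gives $W':=\Pi_0+\phi_{12}^{-1}\phi_{11}=-\phi_{12}^{-1}W^{-1}\phi_{12}^{-\top}\prec0$, which is precisely the branch of \eqref{eq:Y_0_pm} equal to $Y_{0,+}$, i.e. $\Pi_{0,+}=Y_{0,+}$. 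Finally, since the algebraic equation above is necessary \emph{and} sufficient whenever the Riccati recursions remain defined, if the recursions from $\Pi_\ft=Y_{\ft,-}$, $H_\ft=\bsigma_\ft^{-1}-Y_{\ft,-}$ do propagate over $\bbra{0,\ft}$, then the boundary conditions \eqref{eq:boundary} hold automatically.
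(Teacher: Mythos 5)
Your outline follows the same route as the paper's Appendix~\ref{app:solution_riccati}: normalize $X_\ft=\what{X}_\ft=I$, reduce the coupled Riccati boundary-value problem to the symplectic linear flow via Proposition~\ref{prop:linear}, obtain a quadratic matrix equation for $\Pi_\ft$ whose two ``extreme'' solutions are $Y_{\ft,\pm}$, and then use reachability plus a monotonicity/comparison argument to show the $+$ branch propagates. One place where you are actually \emph{more} explicit than the paper is the derivation of the quadratic equation: the paper simply invokes ``the same argument as in the proof of \cite[Theorem~1]{Chen2018}'', whereas your use of the symplectic invariant $\what{X}_k^\top Y_k-\what{Y}_k^\top X_k\equiv\bsigma_\ft^{-1}$, giving $\Sigma_k=X_k\bsigma_\ft\what{X}_k^\top$ and $\what{X}_k=X_k-\Phi_{12}(k,\ft)\bsigma_\ft^{-1}$, is a clean self-contained version of that step, and your endpoint computation $\Pi_{0,+}+\phi_{12}^{-1}\phi_{11}=-\phi_{12}^{-1}W^{-1}\phi_{12}^{-\top}\prec0$ is exactly the paper's argument in \eqref{eq:left_hand} for selecting $Y_{0,+}$.

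There are two places where your sketch stops short of a proof, and both are where the paper spends most of its effort. First, the claim that $W\succ\bsigma_\ft^{-1}$ ``anchors the comparison'' and ``keeps $X_k,\what{X}_k$ invertible throughout'' only establishes invertibility at $k=0$ and $k=\ft$; positivity at the two endpoints does not propagate to intermediate $k$ by itself. The paper's argument needs (a) Lemma~\ref{lem:T_invertible} to invert $\Phi_{12}(k,\ft)$ on $\bbra{0,k_\rmr}$ and the monotonicity $T_{k,l}\preceq T_{k-1,l}$ of Lemma~\ref{lem:T_monotone} to get $\Phi_{12}(k,\ft)^{-1}X_k^{\ft,+}\succ0$ there, and then (b) the factorization $X_k^{\ft,+}=X_k^{0,+}X_0^{\ft,+}$ to cover $\bbra{k_\rmr+1,\ft}$ from the other endpoint --- and note that (b) requires the identity $\Pi_{0,+}=Y_{0,+}$ \emph{first}, so in the paper that identity is a lemma used for existence, not merely a corollary stated at the end as in your write-up. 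Second, the ``at most two'' claim: the equation $(\wtilde{W}-\tfrac12 I)^2=\tfrac14 I+\wtilde{N}$ has up to $2^n$ symmetric solutions, and ruling out the mixed square roots is asserted but not proved in your sketch (you correctly identify that a conjugate-point argument is needed); to be fair, the paper also defers exactly this point to \cite{Chen2018}, so you are on equal footing there. In short: same approach, correct where executed, but the intermediate-time invertibility argument needs to be carried out in full rather than gestured at.
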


Additionally, the following result ensures that the solution $ \{\Pi_{k,+}, H_{k,+}\} $ satisfies the assumption of the positive definiteness in Proposition~\ref{prop:opt}.
The proof is given in Appendix~\ref{app:positive}.

\begin{proposition}\label{prop:positive}
	Suppose that Assumptions~\ref{ass:invertibility},~\ref{ass:reachability} hold.
    Then, the following hold:
	\begin{itemize}
	\item[(i)] For any $ k\in \bbra{0,\ft-1} $, it holds that
	\begin{align}
		I + B_k^\top \Pi_{k+1,+} B_k &\succ 0, \label{eq:BPiB_positive} \\
		I - B_k^\top H_{k+1,+} B_k &\succ 0. \label{eq:BHB_positive} 
	\end{align}
	\item[(ii)] \cyan{Assume that the Riccati equation~\eqref{eq:riccati} with the terminal condition~$ \Pi_\ft = Y_{\ft,-} $ has a solution $ \{\Pi_{k,-}\}_{k=0}^\ft $.} Then, there exists $s \in \bbra{0, \ft -1}$ such that $I + B_s^\top \Pi_{s+1,-} B_s$ is not positive definite.
	\hfill $ \diamondsuit $
	\end{itemize}
\end{proposition}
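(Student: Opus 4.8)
The plan is to establish (i) by a backward induction on $k$ resting on the linear‑system representation of Proposition~\ref{prop:linear}, and to establish (ii) by contradiction with the uniqueness in Proposition~\ref{prop:opt}. For (i), I would first set up the representation: take terminal data $X_\ft=\what X_\ft=I$, $Y_\ft=Y_{\ft,+}=\Pi_{\ft,+}$, $\what Y_\ft=Y_{\ft,+}-\bsigma_\ft^{-1}$, and let $\{X_k,Y_k\}$, $\{\what X_k,\what Y_k\}$ solve \eqref{eq:XY},~\eqref{eq:XYhat} with these; by Proposition~\ref{prop:linear} this reproduces $\Pi_{k,+}=Y_kX_k^{-1}$ and $H_{k,+}=-\what X_k^{-\top}\what Y_k^{\top}$ with $X_k,\what X_k$ invertible for all $k$. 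Since $M_k$ is symplectic, $\what X_k^{\top}Y_k-\what Y_k^{\top}X_k$ does not depend on $k$, hence equals its value $\bsigma_\ft^{-1}$ at $k=\ft$ (using that $Y_{\ft,+}$ is symmetric), so that
\begin{equation*}
	\Pi_{k,+}+H_{k,+}=\what X_k^{-\top}\bsigma_\ft^{-1}X_k^{-1},\qquad \Sigma_k:=(\Pi_{k,+}+H_{k,+})^{-1}=X_k\bsigma_\ft\what X_k^{\top}.
\end{equation*}
From \eqref{eq:XY},~\eqref{eq:XYhat} one also gets $(I+G_k\Pi_{k+1,+})X_{k+1}=\bara_kX_k$ and $(I-G_kH_{k+1,+})\what X_{k+1}=\bara_k\what X_k$; since $X_k,\what X_k$ are invertible, $\det(I+B_k^{\top}\Pi_{k+1,+}B_k)\neq0$ and $\det(I-B_k^{\top}H_{k+1,+}B_k)\neq0$ for every $k$, so in \eqref{eq:BPiB_positive},~\eqref{eq:BHB_positive} only the signs of the eigenvalues are in question.

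I would then prove, by backward induction from $k=\ft$, that $\Sigma_k\succ0$ together with \eqref{eq:BPiB_positive},~\eqref{eq:BHB_positive}. The base case $\Sigma_\ft=\bsigma_\ft\succ0$ is immediate, and \eqref{eq:BPiB_positive},~\eqref{eq:BHB_positive} at $k=\ft-1$ come from the explicit terminal value \eqref{eq:Y_terminal}: with $W:=\bsigma_\ft^{1/2}\varphi_{12}^{-1}\bsigma_0\varphi_{12}^{-\top}\bsigma_\ft^{1/2}\succeq0$, the bound $(\tfrac14 I+W)^{1/2}\succeq\tfrac12 I$ yields $\Pi_{\ft,+}\succeq-\varphi_{12}^{-1}\varphi_{11}+\bsigma_\ft^{-1}$ and $H_{\ft,+}\preceq\varphi_{12}^{-1}\varphi_{11}$, which combined with the expression of $\varphi_{12}$ in terms of the reachability Gramian (available from the proof of Proposition~\ref{thm:riccati_solution}) gives the two positivity claims. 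For the inductive step, from $\Sigma_{k+1}\succ0$ one has
\begin{equation*}
	(I+B_k^{\top}\Pi_{k+1,+}B_k)-(I-B_k^{\top}H_{k+1,+}B_k)=B_k^{\top}(\Pi_{k+1,+}+H_{k+1,+})B_k=B_k^{\top}\Sigma_{k+1}^{-1}B_k\succeq0,
\end{equation*}
so \eqref{eq:BPiB_positive} at index $k$ follows from \eqref{eq:BHB_positive} at index $k$ (and, because $-H_{k,+}$ also solves \eqref{eq:riccati}, \eqref{eq:BHB_positive} is a statement of the same form as \eqref{eq:BPiB_positive}); meanwhile $\Sigma_k\succ0$ is propagated backward through $\Sigma_k=X_k\bsigma_\ft\what X_k^{\top}$ and the recursions $(I+G_k\Pi_{k+1,+})X_{k+1}=\bara_kX_k$, $(I-G_kH_{k+1,+})\what X_{k+1}=\bara_k\what X_k$, the point being that the $+$-branch of the matrix square root in \eqref{eq:Y_terminal} keeps $\bsigma_\ft^{-1/2}\what X_k^{-1}X_k\bsigma_\ft^{1/2}$ --- which equals $I$ at $k=\ft$ --- positive definite as $k$ decreases. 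I expect this propagation to be the main obstacle: one has to rule out that any $I+B_k^{\top}\Pi_{k+1,+}B_k$ acquires a zero eigenvalue as $k$ decreases, i.e., that the singular locus of \eqref{eq:riccati} is never reached along the $+$-branch; I would do this by combining the monotone dependence of \eqref{eq:riccati} on its terminal condition (recall $\barq_k\succeq0$) with Assumption~\ref{ass:reachability}, reusing estimates from the proof of Proposition~\ref{thm:riccati_solution}.

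For (ii), suppose toward a contradiction that $I+B_k^{\top}\Pi_{k+1,-}B_k\succ0$ for every $k\in\bbra{0,\ft-1}$. Propagating \eqref{eq:sigma_evolution} forward from $\Sigma_0=\bsigma_0$ with $\Pi_k=\Pi_{k,-}$ is then legitimate, and by Proposition~\ref{prop:riccati} the matrices $H_{k,-}:=\Sigma_k^{-1}-\Pi_{k,-}$ solve \eqref{eq:H_riccati}; because $Y_{\ft,-}$ in \eqref{eq:Y_terminal} is, by construction, a root of the matrix quadratic equation encoding the two boundary relations \eqref{eq:boundary2_1},~\eqref{eq:boundary2_2}, the pair $\{\Pi_{k,-},H_{k,-}\}$ satisfies the boundary conditions~\eqref{eq:boundary}, so in particular $\Sigma_\ft=\bsigma_\ft$, and Proposition~\ref{prop:opt} makes the policy \eqref{eq:opt_policy} built from $\{\Pi_{k,-}\}$ an optimal policy of Problem~\ref{prob:density_control}. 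But part (i) and Proposition~\ref{prop:opt} already identify the policy built from $\{\Pi_{k,+}\}$ as \emph{the} optimal policy, and the two are distinct: \eqref{eq:Y_terminal} gives $\Pi_{\ft,+}-\Pi_{\ft,-}=2\bsigma_\ft^{-1/2}(\tfrac14 I+W)^{1/2}\bsigma_\ft^{-1/2}\succeq\bsigma_\ft^{-1}\succ0$, and propagating \eqref{eq:riccati} backward --- a congruence by the invertible $\bara_k$ at those steps where $B_k=0$ --- shows $\Pi_{j+1,+}-\Pi_{j+1,-}\succ0$ for the largest index $j$ with $B_j\neq0$ (which exists under Assumption~\ref{ass:reachability}), so the noise covariances $(I+B_j^{\top}\Pi_{j+1,\pm}B_j)^{-1}$ of $\pi_j$ differ. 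This contradicts uniqueness, so some $I+B_s^{\top}\Pi_{s+1,-}B_s$ must fail to be positive definite.
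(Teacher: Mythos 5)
Your proof of part (i) has a genuine gap at exactly the step that carries all the difficulty. The setup is fine: the symplectic invariant giving $\Pi_{k,+}+H_{k,+}=\what{X}_k^{-\top}\bsigma_\ft^{-1}X_k^{-1}$, the observation that $(I+G_k\Pi_{k+1,+})X_{k+1}=\bara_kX_k$ forces $\det(I+B_k^\top\Pi_{k+1,+}B_k)\neq 0$, and the base case at $k=\ft-1$ are all correct (the last still needs the Gramian fact $T_\ft\succ G_{\ft-1}$, which you only gesture at). But the inductive step does not close. Your difference identity shows only that \eqref{eq:BHB_positive} at index $k$ implies \eqref{eq:BPiB_positive} at index $k$, and the remark that $-H_{k,+}$ solves \eqref{eq:riccati} merely restates that \eqref{eq:BHB_positive} is the same \emph{kind} of claim for a different solution with a different terminal value --- it gives no independent handle on it. Nonvanishing of the determinant plus positivity at $k=\ft-1$ does not propagate, because $k$ is discrete and there is no continuity argument to prevent a sign flip between consecutive steps. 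You explicitly defer this to ``monotone dependence on the terminal condition plus Assumption~\ref{ass:reachability}'', which is precisely where the paper's proof lives: it compares $\Pi_{k,+}$ against the particular solution $\esu_k=-T_k^{-1}$ built from the reachability Gramian, using a Riccati comparison identity to show $\Pi_{k,+}\succ\esu_k$ and $I+B_k^\top\esu_{k+1}B_k\succ 0$ on $\bbra{k_\rmr,\ft}$. Crucially, $T_k$ need not be invertible for $k<k_\rmr$, so the paper must run a second, separate comparison through the backward problem of Section~\ref{sec:reverse} (with $P_k=\Pi_{k,+}-\barq_k$ and the backward Gramian $\wtilde{\calR}$), including the nontrivial identification $P_{0,+}=\wtilde{P}_{0,+}$ via a continuity argument in $\bsigma_\ft$. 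None of this two-regime structure appears in your sketch, and without it the claim for $k\in\bbra{0,k_\rmr-1}$ is unsupported.

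Part (ii) is a genuinely different route from the paper's and is essentially sound, conditional on (i): you derive a contradiction with the uniqueness statement of Proposition~\ref{prop:opt} by exhibiting two distinct optimal policies, and your computation $Y_{\ft,+}-Y_{\ft,-}=2\bsigma_\ft^{-1/2}(\tfrac14I+W)^{1/2}\bsigma_\ft^{-1/2}\succeq\bsigma_\ft^{-1}$, propagated by congruence to the last index with $B_j\neq 0$, correctly shows the policies differ. The paper instead stays inside the Riccati comparison framework: it shows $\Pi_{k,-}\prec\esu_k\prec 0$ on $\bbra{k_\rmr,\ft}$ and, via the backward problem, $\Pi_{k_\rmr,-}\succ\barq_{k_\rmr}\succeq 0$, a direct contradiction. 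Your version is arguably more conceptual, but you should justify more carefully why the forward-propagated covariance satisfies $\Sigma_\ft=\bsigma_\ft$ (this needs the last clause of Proposition~\ref{thm:riccati_solution} together with forward uniqueness of \eqref{eq:H_riccati}, which in turn rests on Assumption~\ref{ass:invertibility}), rather than attributing it to $Y_{\ft,-}$ being ``a root of the matrix quadratic equation''.
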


The above result says even if the coupled Riccati equations \eqref{eq:riccati}--\eqref{eq:boundary} have the solution $ \{\Pi_k,H_k\} = \{\Pi_{k,-},H_{k,-}\} $, it cannot be used for the construction of the optimal policy for Problem~\ref{prob:density_control}.
Therefore, $ \{\Pi_{k,+}, H_{k,+} \} $ is the unique solution to \eqref{eq:riccati}--\eqref{eq:boundary} that satisfies $ I + B_k^\top \Pi_{k+1} B_k \succ 0 $ for any $ k\in \bbra{0,\ft-1} $.

By Propositions~\ref{prop:opt},~\ref{thm:riccati_solution},~\ref{prop:positive}, we obtain the main result of this section.
\begin{theorem}\label{thm:opt_policy}
	Suppose that Assumptions~\ref{ass:invertibility},~\ref{ass:reachability} hold. Then, the unique optimal policy of Problem~\ref{prob:density_control} with $ R_k \equiv I$, $\varepsilon = 1 $ is given by
	\begin{align}
		&\pi_k^* (u|x_k) \nonumber\\
        &= \calN\bigl( u  |  -(I + B_k^\top \Pi_{k+1,+} B_k)^{-1} (B_k^\top \Pi_{k+1,+} A_k + \rev{\cro_k^\top}) x_k, \nonumber\\
		&\qquad\qquad (I + B_k^\top \Pi_{k+1,+} B_k)^{-1}\bigr) \label{eq:opt_policy_thm}
 \end{align}
 for any $ k\in \bbra{0,\ft-1} $, $ u\in \bbR^m$, and $x_k\in \bbR^n $.
 Here, $ \{\Pi_{k,+}\} $ is the solution to \eqref{eq:riccati} with the terminal value \eqref{eq:Pi_terminal}.
 \hfill $ \diamondsuit $
\end{theorem}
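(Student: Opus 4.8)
The plan is to assemble Theorem~\ref{thm:opt_policy} directly from the three preceding propositions, since all the genuine analytic work — solving the coupled Riccati equations and verifying the definiteness of $I + B_k^\top \Pi_{k+1,+}B_k$ — has already been packaged there. First I would invoke Proposition~\ref{thm:riccati_solution}: under Assumptions~\ref{ass:invertibility} and~\ref{ass:reachability}, the Riccati equations~\eqref{eq:riccati} and~\eqref{eq:H_riccati} coupled through the boundary conditions~\eqref{eq:boundary} admit the solution $\{\Pi_k, H_k\} = \{\Pi_{k,+}, H_{k,+}\}$ specified by the terminal values~\eqref{eq:Pi_terminal} and~\eqref{eq:H_terminal}. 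In particular this pair satisfies~\eqref{eq:riccati}, \eqref{eq:H_riccati}, and the \emph{full} boundary condition~\eqref{eq:boundary} (both endpoints), which are exactly the structural hypotheses of Proposition~\ref{prop:opt}.

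Next I would supply the remaining hypothesis of Proposition~\ref{prop:opt}, namely the positive definiteness $I + B_k^\top \Pi_{k+1,+}B_k \succ 0$ for every $k\in\bbra{0,\ft-1}$; this is precisely inequality~\eqref{eq:BPiB_positive} in Proposition~\ref{prop:positive}(i). With every hypothesis of Proposition~\ref{prop:opt} now verified for the pair $\{\Pi_{k,+}, H_{k,+}\}$, that proposition yields that the policy~\eqref{eq:opt_policy} is the unique optimal policy of Problem~\ref{prob:density_control}. Specializing~\eqref{eq:opt_policy} to $R_k = I$, $\varepsilon = 1$ (recall this reduction is without loss of generality, as explained after Problem~\ref{prob:density_control}) and substituting $\Pi_{k+1} = \Pi_{k+1,+}$ reproduces the claimed formula~\eqref{eq:opt_policy_thm}. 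This establishes both the existence of an optimal policy and its explicit closed form.

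Finally I would remark that~\eqref{eq:opt_policy_thm} is unambiguous despite the fact that Proposition~\ref{thm:riccati_solution} permits a second solution $\{\Pi_{k,-}, H_{k,-}\}$ of~\eqref{eq:riccati}--\eqref{eq:boundary} whenever the Riccati recursions with terminal data $\Pi_\ft = Y_{\ft,-}$, $H_\ft = \bsigma_\ft^{-1} - Y_{\ft,-}$ stay well posed on $\bbra{0,\ft}$. By Proposition~\ref{prop:positive}(ii) that solution necessarily violates $I + B_s^\top \Pi_{s+1,-}B_s \succ 0$ at some index $s$, so it cannot be used in the construction of Proposition~\ref{prop:opt}: the associated Gaussian would lack a positive-definite covariance and hence would not even be a feasible stochastic policy. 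Thus $\{\Pi_{k,+}\}$ is the only Riccati solution relevant to the optimal policy; uniqueness of the optimal policy itself is already guaranteed by Proposition~\ref{prop:opt}.

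I expect no substantial obstacle: the statement is essentially a corollary of the material developed above, and the only step requiring care is the bookkeeping check that the output of Proposition~\ref{thm:riccati_solution} matches verbatim the hypotheses of Proposition~\ref{prop:opt}, together with the trivial substitution $R_k = I$, $\varepsilon = 1$ that turns~\eqref{eq:opt_policy} into~\eqref{eq:opt_policy_thm}.
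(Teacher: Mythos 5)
Your proposal is correct and follows exactly the paper's argument: the paper proves Theorem~\ref{thm:opt_policy} by citing Propositions~\ref{prop:opt},~\ref{thm:riccati_solution}, and~\ref{prop:positive} in precisely the combination you describe, with Proposition~\ref{thm:riccati_solution} supplying the solution $\{\Pi_{k,+},H_{k,+}\}$ to \eqref{eq:riccati}--\eqref{eq:boundary}, Proposition~\ref{prop:positive}(i) supplying the definiteness hypothesis, and Proposition~\ref{prop:opt} delivering the unique optimal policy. Your closing remark on why the ``$-$'' branch is irrelevant likewise matches the paper's discussion following Proposition~\ref{prop:positive}.
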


In summary, to obtain the optimal policy of the MaxEnt density control problem (Problem~\ref{prob:density_control}), we only need to compute $ \Pi_\ft = Y_{\ft,+} $ given by \eqref{eq:Y_terminal} and the resulting solution $ \{\Pi_{k,+}\}_{k=0}^{\ft-1} $ to the Riccati equation~\eqref{eq:riccati}. The terminal value $ Y_{\ft,+} $ is determined by the initial and target covariance matrices $ \bsigma_0, \bsigma_\ft $ and the parameters \fin{$ \{\bara_k\} = \{A_k - B_k \cro_k^\top \} $}, $\{G_k \} = \{B_k B_k^\top \}$, \fin{$\{\barq_k\} = \{Q_k - \cro_k \cro_k^\top \} $} through $ \varphi_{11} = \Phi_{11}(0,\ft)$, $\varphi_{12} = \Phi_{12} (0,\ft) $. Recall that $ \Phi_{ij}(0,\ft) $ is the \mbox{$ (i,j) $-th} block of $ \Phi_M (0,\ft) = M_0^{-1}M_1^{-1} \cdots M_{\ft-1}^{-1} $, and $ M_k^{-1} $ is given by \eqref{eq:M_inv}.
Then, the optimal control $ u_k $ following \eqref{eq:opt_policy_thm} can be obtained as the addition of the LQ optimal control and independent Gaussian noise:
\begin{align*}
    u_k &= -(I + B_k^\top \Pi_{k+1,+} B_k)^{-1} (B_k^\top \Pi_{k+1,+} A_k + \cro_k^\top) x_k + w_k^*, \\
    w_k^* &\sim \calN(0, (I + B_k^\top \Pi_{k+1,+} B_k)^{-1}) .
\end{align*}
The optimal policy for the general case where $ R_k \neq I $, $ \varepsilon \neq 1 $ will be given in Corollary~\ref{cor:general_opt} of Section~\ref{sec:zero-noise}.

\rev{
\begin{rmk}\label{rmk:reachability}
    To consider the intuition behind Assumption~\ref{ass:reachability}, we consider $ \cro_k \equiv 0 $.
    Then, without the assumption on $ \calR(\ft,k) $, Assumption~\ref{ass:reachability} just requires the system~\eqref{eq:system} to be reachable over the horizon $ \ft $.
    The assumption on $ \calR(\ft,k) $ is made to resolve difficulties in our analysis due to the singularity of $ \calR(k,0) $ for small $ k $. For example, the invertibility of $ \calR(k,0) $ can be utilized to prove that of $ X_k $ used for $ \Pi_k = Y_k X_k^{-1} $; see Appendix~\ref{app:solution_riccati} together with Lemma~\ref{lem:T_invertible} and Remark~\ref{rmk:gramian_riccati} for the details on the role of the reachability Gramian. However, this approach fails for $ k\in \bbra{1,k_\rmr-1} $, where $ \calR(k,0) $ can be singular. This issue does not arise in the continuous-time case~\cite{Chen2018}. Instead of $ \calR(k,0) $, our idea is to use $ \calR(\ft,k) $, which is invertible for \gram{$ k \in \bbra{0,k_\rmr} $} to ensure the invertibility of $ X_k $ on the time interval $ \bbra{0,k_\rmr-1} $. 
    In \eqref{eq:controllability}, we will see that the invertibility of $ \calR(\ft,k) $ is equivalent to that of the reachability Gramian of a backward system \eqref{eq:system_inv}, which plays an important role in proving Proposition~\ref{prop:opt_inv}.
    Hence, Assumption~\ref{ass:reachability} assumes the reachability of the forward and backward systems \eqref{eq:system},~\eqref{eq:system_inv} so that the whole time interval $ \bbra{0,\ft} $ is covered by the respective invertible Gramians.

    By the same argument as in \cite[Remark~4]{Ito2022maximum}, for a time-invariant system and weight matrix $ A_k \equiv A$, $B_k \equiv B $, $ \cro_k \equiv \cro $, the invertibility of the reachability Gramians $ \calR(k,0)$, $\calR(\ft,k) $ in Assumption~\ref{ass:reachability} is always satisfied when $ (\bara,B) = (A- B\cro^\top,B) $ is reachable and \gram{$ \ft \ge 2n $}. 
    \rev{Moreover, by using the Popov--Belevitch--Hautus test, it can be easily checked that the reachability of $ (\bara, B) $ is equivalent to that of $ (A,B) $.}
    \hfill $ \diamondsuit $
\end{rmk}
}

\begin{rmk}
    Even if $ Q_k \equiv 0 $, \modi{$ \cro_k \equiv 0 $}, Theorem~1 is novel compared to \cite[Theorem~1]{Ito2022maximum}, which deals with this case. The work~\cite{Ito2022maximum} uses the inverse matrices of $ \Pi_k, H_k $ for the construction of the optimal policy. However, \cite[Remark~5]{Ito2022maximum} gives examples \cyan{where singular matrices $ \Pi_k, H_k $ are required for the density control.} Since Theorem~\ref{thm:opt_policy} does not require the invertibility of $ \Pi_k, H_k $, it can be applied to such cases.
    \hfill $ \diamondsuit $
\end{rmk}

Next, we consider Problem~\ref{prob:density_control} whose density constraints \eqref{eq:cov_constraint} are replaced by the general mean constraints~\eqref{eq:general_mean} (referred to as Problem~\hypertarget{prob_gen}{\ref{prob:density_control}$'$}). For notational simplicity, we again let $ \varepsilon = 1$, $R_k \equiv I $. By the same argument as for $ Q_k \equiv 0 $, \modi{$ \cro_k \equiv 0 $} in \cite{Ito2022maximum}, this problem can be decomposed into mean control and covariance control.
Specifically, the optimal density control is given by $ u_k = \bar{u}_k^* + \check{u}_k^* $. Here, $ \bar{u}^* = \{\bar{u}_k^*\} $ is an optimal mean control of $ \mu_k := \bbE[x_k] $ that solves 
\begin{align}
    &\underset{\bar{u} }{\rm minimize}  && \sum_{k=0}^{\ft-1} \frac{1}{2} \begin{bmatrix}
        \mu_k \\ \bar{u}_k
    \end{bmatrix}^\top \rev{\begin{bmatrix}
        Q_k & \cro_k \\
        \cro_k^\top & I          
    \end{bmatrix}} \begin{bmatrix}
        \mu_k \\ \bar{u}_k
    \end{bmatrix} \label{prob:mean_steering}\\
    &{\rm subject~to} && \mu_{k+1} = A_k \mu_k + B_k \bar{u}_k, \nonumber\\
    & && \mu_0 = \bar{\mu}_0, \ \mu_\ft = \bar{\mu}_\ft , \nonumber
\end{align}
and $ \check{u}_k^* \sim \check{\pi}_k (\cdot | \check{x}_k) $ is the optimal covariance control that steers $ \check{x}_k := x_k - \mu_k $ following $ \check{x}_{k+1} = A_k \check{x}_k + B_k \check{u}_k $ from $ \check{x}_0 \sim \calN(0,\bar{\Sigma}_0) $ to $ \check{x}_N \sim \calN(0,\bar{\Sigma}_\ft)  $ with the cost functional
\begin{align}
    \bbE \Biggl[ \sum_{k=0}^{\ft-1} \biggl( \frac{1}{2}\begin{bmatrix}
        \check{x}_k \\ \check{u}_k
    \end{bmatrix}^\top \begin{bmatrix}
        Q_k & \cro_k \\
        \cro_k^\top & I          
    \end{bmatrix} \begin{bmatrix}
        \check{x}_k \\ \check{u}_k
    \end{bmatrix}  - \calH(\check{\pi}_k (\cdot | \check{x}_k))   \biggr)  \Biggr] . \nonumber
\end{align}

The optimal policy $ \check{\pi}^* $ has already been obtained by Theorem~\ref{thm:opt_policy}.
Necessary conditions for the optimality of $ \bar{u} = \bar{u}^* $ are given by
\begin{align*}
	\mu_{k+1} &= A_k \mu_k + B_k \bar{u}_k^*, \\
	\lambda_k &= Q_k \mu_k + \rev{\cro_k \bar{u}_k^*} + A_k^\top \lambda_{k+1}, \\
	\bar{u}_k^* &= - B_k^\top \lambda_{k+1} - \rev{\cro_k^\top \mu_k} ,\\
	\mu_0 &= \bar{\mu}_0, \ \mu_\ft = \bar{\mu}_\ft ,
\end{align*}
where $ \lambda_k \in \bbR^n $ is a Lagrange multiplier~\cite{Lewis2012}.
By solving the above equations, we obtain
\begin{align}
    &\left[\mu_{k+1}^\top \ \lambda_{k+1}^\top \right]^\top = M_k \left[\mu_{k}^\top \ \lambda_{k}^\top\right]^\top, \nonumber\\
    &\lambda_0 = \phi_{12}^{-1} (\bar{\mu}_\ft - \phi_{11} \bar{\mu}_0), \nonumber\\
    &\bar{u}_k^* = \begin{bmatrix} B_k^\top \bara_k^{-\top} \barq_k - \cro_k^\top & -B_k^\top \bara_k^{-\top} \end{bmatrix} \Phi(k,0) \begin{bmatrix}
        \bar{\mu}_0 \\
        \lambda_0  
    \end{bmatrix} ,\label{eq:opt_mean_steering}
\end{align}
where we assumed the invertibility of $ \phi_{12} $ and \modi{$ \bara_k $} for any $ k \in \bbra{0,\ft-1} $.
Assumption~\ref{ass:reachability} ensures that $ \phi_{12} $ is invertible by Lemma~\ref{lem:T_invertible}.
Since the optimal control problem for $ \mu_k $ is a quadratic programming with equality constraints, \eqref{eq:opt_mean_steering} is the unique optimal solution to \modi{\eqref{prob:mean_steering}.}
Consequently, we arrive at the following result.
\begin{corollary}\label{cor:opt_policy_general}
	Suppose that Assumptions~\ref{ass:invertibility},~\ref{ass:reachability} hold. Then, the unique optimal policy of Problem~\hyperlink{prob_gen}{\ref*{prob:density_control}$ ' $} with $ R_k \equiv I$, $\varepsilon = 1 $ is given by
	\begin{align}
		&\pi_k^* (u|x_k) \nonumber\\
        &= \calN\bigl( u  |  -(I + B_k^\top \Pi_{k+1,+} B_k)^{-1} (B_k^\top \Pi_{k+1,+} A_k + \rev{\cro_k^\top} ) \nonumber\\
        &\qquad\times (x_k- \mu_k^*) + \bar{u}_k^* , ~~ (I + B_k^\top \Pi_{k+1,+} B_k)^{-1}\bigr), \nonumber\\
		&\quad k\in \bbra{0,\ft-1},\ u\in \bbR^m, \ x_k\in \bbR^n , \label{eq:opt_policy_general_mean}
 \end{align}
 where $ \{\Pi_{k,+}\} $ is the solution to \eqref{eq:riccati} with the terminal value \eqref{eq:Pi_terminal}, $ \bar{u}_k^* $ is given by \eqref{eq:opt_mean_steering}, and
 \begin{equation}
	\mu_k^* := 
	\begin{cases}
	\Phi_A (k,0) \bar{\mu}_0 + \sum_{s=0}^{k-1} \Phi_A (k,s+1) B_s \bar{u}_s^* , &\\
	&\hspace{-1.5cm} k\in \bbra{1,\ft-1} , \\
	\bar{\mu}_0, &\hspace{-1.5cm}  k= 0,
	\end{cases} \nonumber
\end{equation}
where $ \Phi_A (k,l) := A_{k-1}A_{k-2} \cdots A_l  $ for $ k > l $ and $ \Phi_A (k,k) = I $.
 \hfill $ \diamondsuit $
\end{corollary}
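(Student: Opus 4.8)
The plan is to read the corollary off the mean/covariance decomposition set up immediately before its statement, so the proof is essentially an assembly step. First I would verify that the decomposition is exact. Writing $\mu_k := \bbE[x_k]$, $\bar{u}_k := \bbE[u_k]$, $\check{x}_k := x_k - \mu_k$, $\check{u}_k := u_k - \bar{u}_k$, the affine dynamics give $\mu_{k+1} = A_k\mu_k + B_k\bar{u}_k$ and $\check{x}_{k+1} = A_k\check{x}_k + B_k\check{u}_k$; since the fluctuations are centered, the quadratic cost in \eqref{eq:cost} splits additively into the deterministic form in $(\mu_k,\bar{u}_k)$ appearing in \eqref{prob:mean_steering} and the zero-mean form in $(\check{x}_k,\check{u}_k)$, the cross term $\cro_k$ producing no coupling because $\bbE[\check{x}_k](\bar{u}_k)^\top = 0$ and $(\mu_k)^\top\cro_k\bbE[\check{u}_k] = 0$; the entropy term depends only on the conditional law of $\check{u}_k$; and the Gaussian endpoint constraints \eqref{eq:general_mean} split into $\mu_0 = \bar{\mu}_0$, $\mu_\ft = \bar{\mu}_\ft$ and $\check{x}_0 \sim \calN(0,\bsigma_0)$, $\check{x}_\ft \sim \calN(0,\bsigma_\ft)$. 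Hence the optimal policy has the form $u_k = \bar{u}_k^* + \check{u}_k^*$, with $\bar{u}^*$ solving the deterministic problem \eqref{prob:mean_steering} and $\check{\pi}^*$ solving the zero-mean covariance-steering problem, exactly as in \cite{Ito2022maximum} for $Q_k\equiv 0$, $\cro_k\equiv 0$.

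For the covariance part, $\check{x}_k$ obeys $\check{x}_{k+1} = A_k\check{x}_k + B_k\check{u}_k$ with zero-mean Gaussian endpoints, so this is precisely Problem~\ref{prob:density_control} with $R_k\equiv I$, $\varepsilon = 1$. Under Assumptions~\ref{ass:invertibility} and~\ref{ass:reachability}, Theorem~\ref{thm:opt_policy} applies verbatim and yields the unique optimal $\check{\pi}_k^*(\check{u}|\check{x}_k) = \calN\bigl(\check{u}\,\big|\,-(I + B_k^\top\Pi_{k+1,+}B_k)^{-1}(B_k^\top\Pi_{k+1,+}A_k + \cro_k^\top)\check{x}_k,\ (I + B_k^\top\Pi_{k+1,+}B_k)^{-1}\bigr)$, with $\{\Pi_{k,+}\}$ the solution of \eqref{eq:riccati} carrying the terminal value \eqref{eq:Pi_terminal}.

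For the mean part, \eqref{prob:mean_steering} is a finite-dimensional quadratic program whose equality constraints are the dynamics together with the two endpoint conditions. I would first note it is strictly convex on the feasible affine set: a feasible variation $\delta\bar{u}$ with vanishing quadratic value forces each $(\delta\mu_k,\delta\bar{u}_k)$ into the kernel of the weight $\big(\begin{smallmatrix}Q_k & \cro_k\\ \cro_k^\top & I\end{smallmatrix}\big)$ (positive semidefinite by the Schur complement condition $\barq_k\succeq 0$), hence $\delta\bar{u}_k = -\cro_k^\top\delta\mu_k$ and $\delta\mu_{k+1} = \bara_k\delta\mu_k$ with $\delta\mu_0 = 0$, so $\delta\bar{u}\equiv 0$. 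Therefore the stationarity conditions displayed before the corollary are necessary and sufficient; assembling them into the augmented recursion $[\mu_{k+1}^\top\ \lambda_{k+1}^\top]^\top = M_k[\mu_k^\top\ \lambda_k^\top]^\top$ with $\mu_0 = \bar{\mu}_0$, $\mu_\ft = \bar{\mu}_\ft$ and solving in closed form, using invertibility of $\phi_{12}$ (from Assumption~\ref{ass:reachability} via Lemma~\ref{lem:T_invertible}) and of $\bara_k$ (Assumption~\ref{ass:invertibility}), gives the unique $\bar{u}_k^*$ of \eqref{eq:opt_mean_steering}. Propagating $\mu_{k+1}^* = A_k\mu_k^* + B_k\bar{u}_k^*$ from $\mu_0^* = \bar{\mu}_0$ by variation of constants then gives the stated formula for $\mu_k^*$.

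Finally I would glue the pieces: $u_k = \bar{u}_k^* + \check{u}_k^*$ with $\check{u}_k^* \sim \check{\pi}_k^*(\cdot\,|\,x_k - \mu_k^*)$, and substituting $\check{x}_k = x_k - \mu_k^*$ into the Gaussian feedback above merely shifts its mean by $\bar{u}_k^*$, which is exactly \eqref{eq:opt_policy_general_mean}; uniqueness is inherited from Theorem~\ref{thm:opt_policy} for the covariance component and from strict convexity of the quadratic program for the mean component. I do not expect any single computation to be difficult here; the main obstacle is the bookkeeping required to make the mean/covariance decomposition — and the transfer of optimality and uniqueness through it — fully rigorous in the presence of the cross term $\cro_k$, after which the corollary follows by substitution into results already established.
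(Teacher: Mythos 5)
Your proposal is correct and follows essentially the same route as the paper: decompose Problem~\hyperlink{prob_gen}{\ref*{prob:density_control}$'$} into mean steering plus zero-mean covariance steering, invoke Theorem~\ref{thm:opt_policy} for the centered part, and solve the equality-constrained quadratic program \eqref{prob:mean_steering} via the stationarity conditions and the symplectic recursion to get \eqref{eq:opt_mean_steering}. Your added detail (the vanishing of the cross terms in the cost split and the kernel argument showing strict convexity of the mean QP along feasible directions) only makes explicit what the paper asserts by reference to \cite{Ito2022maximum} and by the QP-uniqueness remark.
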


\cyan{
\begin{rmk}
	Noting that Proposition~\ref{prop:LQ} does not assume the Gaussianity of the initial distribution, the argument for obtaining the policy \eqref{eq:opt_policy_general_mean} still holds when the density constraints \eqref{eq:general_mean} are replaced by the constraints on mean and covariance:
	\begin{align}
		\bbE[x_0] &= \bar{\mu}_0, & \bbE[(x_0 - \bar{\mu}_0)(x_0 - \bar{\mu}_0)^\top] &= \bsigma_0, \label{eq:mean_cov_constraint1}\\
		\bbE[x_\ft] &= \bar{\mu}_\ft, & \bbE[(x_\ft - \bar{\mu}_\ft)(x_\ft - \bar{\mu}_\ft)^\top] &= \bsigma_\ft .\label{eq:mean_cov_constraint2}
 \end{align}
 That is, the policy \eqref{eq:opt_policy_general_mean} is also optimal for Problem~\hyperlink{prob_gen}{\ref*{prob:density_control}$ ' $} with $ R_k \equiv I$, $\varepsilon = 1 $, and \eqref{eq:mean_cov_constraint1},~\eqref{eq:mean_cov_constraint2} instead of the constraints \eqref{eq:general_mean}.
 \hfill $ \diamondsuit $
\end{rmk}
}

\section{Equivalent Backward Density Control Problem}\label{sec:reverse}
\rev{A MaxEnt density control problem of a linear system with $ Q_k \equiv 0 $, $ \cro_k \equiv 0 $ is equivalent to an SB problem~\cite{Ito2022maximum}. In the classical setting, the SB is known to solve the density control problem and its time reversal simultaneously~\cite{Chen2021liaisons}. This section reveals that a similar result holds for our problem with general quadratic cost. Moreover, the established result can be utilized for the proof of Proposition~\ref{prop:positive}.}
Specifically, we investigate the following density control problem of a backward system associated to Problem~\ref{prob:density_control} with $ R_k \equiv I $, $ \varepsilon = 1 $. 
\begin{problem}[\rev{Backward} MaxEnt density control problem]\label{prob:inverse}
	Find a policy $ \varpi = \{\varpi_k\}_{k=\ft-1,\ldots,0} $ that solves
	\begin{align}
			&\underset{\varpi }{\rm minimize}  && \bbE \Biggl[ \sum_{k=1}^{\ft} \biggl( \frac{1}{2} \begin{bmatrix}
                \xi_k \\ v_{k-1}
            \end{bmatrix}^\top \rev{\begin{bmatrix}
                \barq_k & 0 \\
                0 & I          
            \end{bmatrix}} \begin{bmatrix}
                \xi_k \\ v_{k-1}
            \end{bmatrix}  \nonumber\\
			& &&\qquad\qquad - \calH(\varpi_{k-1} (\cdot | \xi_{k}))   \biggr)  \Biggr] \label{eq:cost_inv} \\
			&{\rm subject~to} && \xi_{k} = \rev{\bara_k^{-1}} \xi_{k+1} - \bara_k^{-1}B_k v_k,  \label{eq:system_inv}\\
				& &&v_k \sim \varpi_k (\cdot | \xi_{k+1}) \ {\rm given} \ \xi_{k+1}, \\
				& &&\fin{k = \ft-1,\ldots,0}, \nonumber\\
				& &&\xi_\ft \sim \calN(0,\bar{\Sigma}_\ft), \  \xi_0 \sim \calN(0,\bar{\Sigma}_0) , \label{eq:cov_constraint_inv}
	\end{align}
	where $ \bar{\Sigma}_0, \bar{\Sigma}_\ft \succ 0$, $\modi{\barq_\ft} := 0 $, $ \{\xi_k\} $ is an $ \bbR^n $-valued backward state process, $ \{v_k\} $ is an $ \bbR^m $-valued control process, and a policy $ \varpi_k (\cdot| \xi_{k+1}) $ denotes the conditional density of $ v_k $ given $ \xi_{k+1} $.
	\hfill $ \diamondsuit $
\end{problem}

\rev{Especially when the weight matrix $ \cro_k $ for the cross term is zero, i.e., $ \bara_k = A_k $, $ \barq_k = Q_k $, Problem~\ref{prob:inverse} gives the density control of the time-reversed system $ \xi_k = A_k^{-1} \xi_{k+1} - A_k^{-1} B_k v_k $ of \eqref{eq:system} with the same weight matrices.}
As a straightforward consequence of Proposition~\ref{prop:opt}, the following statement holds for the \rev{backward} problem.
\begin{corollary}\label{cor:reverse}
    Suppose that Assumption~\ref{ass:invertibility} holds and assume that $ \{J_k\}_{k=0}^\ft $ and $ \{P_k\}_{k=0}^\ft $ satisfy the Riccati equations
	\begin{align}
		&J_{k+1} = \modi{\bara_k^{-\top}} J_k \bara_k^{-1} - \bara_k^{-\top} J_k \bara_k^{-1} B_k \nonumber\\
	&\times (I + B_k^\top \bara_k^{-\top} J_k \bara_k^{-1} B_k)^{-1} B_k^\top \bara_k^{-\top} J_k \bara_k^{-1} + \barq_{k+1} , \nonumber\\
    &\quad k\in \bbra{0,\ft-1},\label{eq:riccati_inv}\\
	&P_{k+1} = \bara_k^{-\top} P_k \bara_k^{-1} - \bara_k^{-\top} P_k \bara_k^{-1} B_k \nonumber\\
	&\times(-I + B_k^\top \bara_k^{-\top} P_k \bara_k^{-1} B_k)^{-1} B_k^\top \bara_k^{-\top} P_k \bara_k^{-1} - \barq_{k+1} \nonumber\\
    &\quad k \in \bbra{0,\ft-1}  \label{eq:riccati_inv_dual}
	\end{align}
	with the boundary conditions
	\begin{align}
		\bsigma_0^{-1} =  J_0 + P_0, \ \bsigma_\ft^{-1} =  J_\ft + P_\ft .\label{eq:boundary_inv}
	\end{align}
	Assume further that $ I + B_k^\top \bara_k^{-\top} J_{k} \bara_k^{-1} B_k \succ 0 $ for any $ k\in \bbra{0,\ft-1} $. Then, the unique optimal policy of Problem~\ref{prob:inverse} is given by
	\begin{align}
		 &\varpi_k^* (v|\xi_{k+1}) = \calN\bigl( v  | \ (I + B_k^\top \bara_k^{-\top} J_k \bara_k^{-1} B_k)^{-1} \nonumber\\
         &\times B_k^\top \bara_k^{-\top}  J_{k} \bara_k^{-1} \xi_{k+1}, ~ (I + B_k^\top \bara_k^{-\top} J_{k} \bara_k^{-1} B_k)^{-1}\bigr) \label{eq:opt_policy_reverse}
	\end{align}
	for any $ k \in \bbra{0,\ft-1}, \ v\in \bbR^m, $ and $ \xi_{k+1} \in \bbR^n  $.
   \hfill $ \diamondsuit $
\end{corollary}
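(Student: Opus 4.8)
The plan is to recognize Problem~\ref{prob:inverse} as a time-reversed instance of Problem~\ref{prob:density_control} (with $ R_k \equiv I $, $ \varepsilon = 1 $, and zero-mean endpoints) and then simply invoke Proposition~\ref{prop:opt}. For $ j \in \bbra{0,\ft} $ put $ \tilde x_j := \xi_{\ft-j} $ and $ \tilde u_j := v_{\ft-1-j} $. Then \eqref{eq:system_inv} becomes the forward recursion $ \tilde x_{j+1} = \tilde A_j \tilde x_j + \tilde B_j \tilde u_j $ with $ \tilde A_j := \bara_{\ft-1-j}^{-1} $, $ \tilde B_j := -\bara_{\ft-1-j}^{-1} B_{\ft-1-j} $; the constraints \eqref{eq:cov_constraint_inv} become $ \tilde x_0 \sim \calN(0,\bsigma_\ft) $, $ \tilde x_\ft \sim \calN(0,\bsigma_0) $; and, re-indexing the sum in \eqref{eq:cost_inv} by $ j = \ft-k $, the running cost takes the form of \eqref{eq:cost} with state weight $ \tilde Q_j := \barq_{\ft-j} $, no cross term $ (\tilde S_j := 0) $, and $ \tilde R_j := I $. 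Here the policy $ \varpi_{k-1}(\cdot|\xi_k) $ becomes a current-state feedback $ \tilde\pi_j(\cdot|\tilde x_j) $, and the entropy terms and the expectation are preserved verbatim under the relabeling. Moreover $ \tilde Q_j = \barq_{\ft-j} \succeq 0 $ for all $ j\in\bbra{0,\ft-1} $ since $ \barq_k = Q_k - \cro_k\cro_k^\top \succeq 0 $ for $ k\in\bbra{1,\ft-1} $ and $ \barq_\ft := 0 $, so the re-indexed problem is a legitimate instance of Problem~\ref{prob:density_control}.

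For this re-indexed problem one has $ \bara_j^{\mathrm{new}} = \tilde A_j - \tilde B_j\tilde S_j^\top = \bara_{\ft-1-j}^{-1} $, which is invertible by Assumption~\ref{ass:invertibility}, and $ \barq_j^{\mathrm{new}} = \tilde Q_j = \barq_{\ft-j} $. Next I would translate the hypotheses of the corollary: letting $ \{\tilde\Pi_j\}_{j=0}^\ft $, $ \{\tilde H_j\}_{j=0}^\ft $ denote the matrices of Proposition~\ref{prop:opt} for the re-indexed data and setting $ J_m := \tilde\Pi_{\ft-m} $, $ P_m := \tilde H_{\ft-m} $, one substitutes $ \bara_j^{\mathrm{new}} = \bara_k^{-1} $, $ \tilde B_j = -\bara_k^{-1}B_k $, $ \barq_j^{\mathrm{new}} = \barq_{k+1} $ with $ k := \ft-1-j $ into \eqref{eq:riccati} and \eqref{eq:H_riccati}. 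A direct computation (the two minus signs in $ \tilde B_j $ cancel in each quadratic term) shows that these become exactly \eqref{eq:riccati_inv} and \eqref{eq:riccati_inv_dual} for $ \{J_m\},\{P_m\} $, that the boundary conditions \eqref{eq:boundary} become $ \bsigma_\ft^{-1} = J_\ft + P_\ft $, $ \bsigma_0^{-1} = J_0 + P_0 $, i.e.\ \eqref{eq:boundary_inv}, and that $ I + \tilde B_j^\top\tilde\Pi_{j+1}\tilde B_j = I + B_k^\top\bara_k^{-\top} J_k\bara_k^{-1}B_k $, so the positive-definiteness required by Proposition~\ref{prop:opt} is precisely the hypothesis assumed in the corollary. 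Hence the standing assumptions of the corollary are exactly those of Proposition~\ref{prop:opt} applied to the re-indexed problem.

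By Proposition~\ref{prop:opt}, the unique optimal policy of the re-indexed problem is \eqref{eq:opt_policy} with the new data. Plugging in $ \tilde S_j = 0 $, $ \tilde A_j = \bara_k^{-1} $, $ \tilde B_j = -\bara_k^{-1}B_k $, $ \tilde\Pi_{j+1} = J_k $ and rewriting the result in terms of $ v_k = \tilde u_j $ and $ \xi_{k+1} = \tilde x_j $ yields exactly \eqref{eq:opt_policy_reverse}; in particular the mean-gain sign flips back to a plus because of the two occurrences of $ \tilde B_j $. Uniqueness transfers since the relabeling $ \varpi \leftrightarrow \tilde\pi $ is a cost- and constraint-preserving bijection, and restricting Problem~\ref{prob:inverse} to policies of the form $ \varpi_k(\cdot|\xi_{k+1}) $ loses no generality because the optimal policy in Proposition~\ref{prop:opt} is already a current-state feedback.

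The only genuine difficulty is bookkeeping: one must keep straight the three differently shifted indices --- $ \xi_{\ft-j} $ for the state, $ v_{\ft-1-j} $ for the control, and $ \barq_{\ft-j} $ for the weight --- and track the sign of $ \tilde B_j $ so that the substitutions land exactly on \eqref{eq:riccati_inv}--\eqref{eq:opt_policy_reverse}. A minor but necessary check is that the transformed state weights $ \barq_{\ft-j} $ (including the edge case $ \barq_\ft = 0 $) are positive semidefinite, so that the re-indexed problem truly satisfies the standing assumptions of Problem~\ref{prob:density_control}.
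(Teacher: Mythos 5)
Your proposal is correct and is essentially the paper's own argument: the paper states Corollary~\ref{cor:reverse} as ``a straightforward consequence of Proposition~\ref{prop:opt},'' and your time-reversal change of variables $\tilde x_j = \xi_{\ft-j}$, $\tilde u_j = v_{\ft-1-j}$ (with $\tilde A_j = \bara_{\ft-1-j}^{-1}$, $\tilde B_j = -\bara_{\ft-1-j}^{-1}B_{\ft-1-j}$, $\tilde Q_j = \barq_{\ft-j}$, no cross term) is exactly the reindexing that makes that consequence precise, correctly reproducing \eqref{eq:riccati_inv}--\eqref{eq:opt_policy_reverse} including the sign cancellations and the swapped boundary conditions. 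The bookkeeping and the remarks on $\barq_{\ft-j}\succeq 0$ and on uniqueness transferring to the restricted policy class are all sound.
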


The closed-form expression of the solution $ \{J_k,P_k\} $ to \eqref{eq:riccati_inv}--\eqref{eq:boundary_inv} can be obtained \fin{similarly} to Proposition~\ref{thm:riccati_solution}.
The focus of this section is to establish the equivalence between Problems~\ref{prob:density_control},~\ref{prob:inverse}, which is shown in the following proposition.
That is, the optimal policy of Problem~\ref{prob:inverse} can be obtained by solving Problem~\ref{prob:density_control}, and vice versa.
The proof is deferred to Appendix~\ref{app:reverse}.
\begin{proposition}\label{prop:opt_inv}
    Suppose that Assumption~\ref{ass:invertibility} holds and assume that $ \{\Pi_k\}_{k=0}^\ft $ and $ \{H_k\}_{k=0}^\ft $ satisfy the equations \eqref{eq:riccati},~\eqref{eq:H_riccati}, respectively.
	Then, $ \{J_k\}_{k=0}^\ft $ and $ \{P_k\}_{k=0}^\ft $ given by $ J_k := H_k + \barq_k $, $ P_k := \Pi_k - \barq_k $ solve the equations~\eqref{eq:riccati_inv},~\eqref{eq:riccati_inv_dual}, respectively, and it holds that for any $ k \in \bbra{0,\ft-1} $,
    \begin{align}
        I + B_k^\top \Pi_{k+1} B_k &= (I - B_k^\top \bara_k^{-\top} P_k \bara_k^{-1} B_k)^{-1}, \label{eq:Pi_P_posi} \\
        I - B_k^\top H_{k+1} B_k &= (I + B_k^\top \bara_k^{-\top} J_k \bara_k^{-1} B_k)^{-1} .
    \end{align}
    In addition, assume that $ \{\Pi_k\} $ and $ \{H_k\} $ satisfy the boundary conditions~\eqref{eq:boundary}.
    Then, $ \{J_k\} $ and $ \{P_k\}$ satisfy the boundary conditions~\eqref{eq:boundary_inv}.
    Assume further that $ I - B_k^\top H_{k+1} B_k \succ 0 $ for any $ k\in \bbra{0,\ft-1} $.
	Then, the policy
	\begin{align}
		\varpi_k^* (v|\xi_{k+1}) &= \calN\bigl( v  | \  B_k^\top H_{k+1} \xi_{k+1},~ I - B_k^\top  H_{k+1} B_k \bigr), \nonumber\\
		&\quad k\in \bbra{0,\ft-1},\ v\in \bbR^m, \ \xi_{k+1}\in \bbR^n  \label{eq:opt_policy_inv}
 \end{align}
 is the unique optimal policy of Problem~\ref{prob:inverse}.
 Moreover, let $ \{x_k^*\} $ and $ \{\xi_k^*\} $ be the optimal state processes obtained by Proposition~\ref{prop:opt} and \eqref{eq:opt_policy_inv}, respectively. Then, $ \xi_k^* $ has the same distribution as $ x_k^* $, that is,
 \begin{equation}\label{eq:symmetry}
    \xi_k^*  \sim \calN\left(0, \bbE[x_k^* (x_k^*)^\top]\right) , ~~ \forall k\in \bbra{0,\ft} .
 \end{equation}
	\hfill $ \diamondsuit $
\end{proposition}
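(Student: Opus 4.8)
The plan is to reduce the entire statement to one algebraic fact about a pair of rational matrix maps, after which the Riccati identities, the boundary conditions, the optimality claim, and the distributional symmetry all follow in a few lines each. For $B\in\bbR^{n\times m}$ introduce, wherever the displayed inverses exist,
\[
\calS_B(M):=M-MB(I+B^\top MB)^{-1}B^\top M,\qquad \calT_B(M):=M-MB(-I+B^\top MB)^{-1}B^\top M.
\]
The first step I would carry out is a direct computation — using only invertibility, with no sign-definiteness of $M$ — with the shorthand $W:=B^\top MB$, establishing $B^\top\calS_B(M)B=W(I+W)^{-1}$, $\calS_B(M)B=MB(I+W)^{-1}$, $B^\top\calT_B(M)B=-W(W-I)^{-1}$, and $\calT_B(M)B=MB(I-W)^{-1}$, and deducing that $\calS_B$ and $\calT_B$ are mutually inverse, $\calT_B\circ\calS_B=\calS_B\circ\calT_B=\mathrm{id}$, together with the sandwich corollaries $I-B^\top\calS_B(M)B=(I+W)^{-1}$ and $I+B^\top\calT_B(M)B=(I-W)^{-1}$. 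Every inverse invoked here exists precisely because $\{\Pi_k\}$ and $\{H_k\}$ are assumed to solve \eqref{eq:riccati},~\eqref{eq:H_riccati}, which already presumes $I+B_k^\top\Pi_{k+1}B_k$ and $-I+B_k^\top H_{k+1}B_k$ invertible.

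Next I would rewrite \eqref{eq:riccati} and \eqref{eq:H_riccati} in this notation as $\bara_k^{-\top}P_k\bara_k^{-1}=\calS_{B_k}(\Pi_{k+1})$ and $\bara_k^{-\top}J_k\bara_k^{-1}=\calT_{B_k}(H_{k+1})$, with $P_k=\Pi_k-\barq_k$, $J_k=H_k+\barq_k$. Applying $\calT_{B_k}$ to the former and $\calS_{B_k}$ to the latter, the involution gives $\Pi_{k+1}=\calT_{B_k}(\bara_k^{-\top}P_k\bara_k^{-1})$ and $H_{k+1}=\calS_{B_k}(\bara_k^{-\top}J_k\bara_k^{-1})$, and adding back the appropriate $\barq_{k+1}$ recovers exactly \eqref{eq:riccati_inv_dual} for $\{P_k\}$ and \eqref{eq:riccati_inv} for $\{J_k\}$. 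Specializing the sandwich corollaries with $M=\Pi_{k+1}$ and $M=H_{k+1}$ yields $I-B_k^\top\bara_k^{-\top}P_k\bara_k^{-1}B_k=(I+B_k^\top\Pi_{k+1}B_k)^{-1}$ and $I+B_k^\top\bara_k^{-\top}J_k\bara_k^{-1}B_k=(I-B_k^\top H_{k+1}B_k)^{-1}$, which are \eqref{eq:Pi_P_posi} and its companion identity.

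Since $J_k+P_k=H_k+\Pi_k$, the boundary conditions \eqref{eq:boundary} for $(\Pi,H)$ are literally \eqref{eq:boundary_inv} for $(J,P)$; together with the standing assumption $I-B_k^\top H_{k+1}B_k\succ0$ and the identity just obtained, $I+B_k^\top\bara_k^{-\top}J_k\bara_k^{-1}B_k\succ0$, so every hypothesis of Corollary~\ref{cor:reverse} holds and \eqref{eq:opt_policy_reverse} is the unique optimal policy of Problem~\ref{prob:inverse}. To reconcile \eqref{eq:opt_policy_reverse} with \eqref{eq:opt_policy_inv}, I would use $\bara_k^{-\top}J_k\bara_k^{-1}=\calT_{B_k}(H_{k+1})$ and $\calT_{B_k}(H_{k+1})B_k=H_{k+1}B_k(I-B_k^\top H_{k+1}B_k)^{-1}$ from the first step: the gain $(I+B_k^\top\bara_k^{-\top}J_k\bara_k^{-1}B_k)^{-1}B_k^\top\bara_k^{-\top}J_k\bara_k^{-1}$ collapses to $(I-B_k^\top H_{k+1}B_k)(I-B_k^\top H_{k+1}B_k)^{-1}B_k^\top H_{k+1}=B_k^\top H_{k+1}$, and the noise covariance is $I-B_k^\top H_{k+1}B_k$.

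Finally, for \eqref{eq:symmetry}: the optimal backward process driven by \eqref{eq:opt_policy_inv} obeys a linear--Gaussian backward recursion started from $\xi_\ft^*\sim\calN(0,\bsigma_\ft)$, so it is a zero-mean Gaussian process and \eqref{eq:symmetry} reduces to an equality of covariances. The covariance-propagation content of Corollary~\ref{cor:reverse} (the backward analogue of Proposition~\ref{prop:riccati}) gives $\bbE[\xi_k^*(\xi_k^*)^\top]^{-1}=J_k+P_k$, while Propositions~\ref{prop:riccati},~\ref{prop:opt} give $\bbE[x_k^*(x_k^*)^\top]^{-1}=\Pi_k+H_k$; as $J_k+P_k=\Pi_k+H_k$, the two covariances coincide for all $k\in\bbra{0,\ft}$, which is \eqref{eq:symmetry}. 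The one genuinely nontrivial ingredient is the first step — the involution and the sandwich identities for possibly indefinite $M$ — and the only care it requires is bookkeeping the existence of each inverse along the way; everything downstream is routine.
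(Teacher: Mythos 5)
Your proposal is correct, and every claim in it checks out: the identities $B^\top\calS_B(M)B=W(I+W)^{-1}$, $\calS_B(M)B=MB(I+W)^{-1}$, $I-B^\top\calS_B(M)B=(I+W)^{-1}$, their $\calT_B$ counterparts, and the mutual-inverse property $\calT_B\circ\calS_B=\calS_B\circ\calT_B=\mathrm{id}$ all follow by the elementary manipulations you indicate, and the required inverses exist exactly because the hypotheses already presuppose invertibility of $I+B_k^\top\Pi_{k+1}B_k$ and $-I+B_k^\top H_{k+1}B_k$. The overall strategy is the same as the paper's --- change variables to $J_k=H_k+\barq_k$, $P_k=\Pi_k-\barq_k$, invert the one-step Riccati recursion, establish the two ``sandwich'' identities, reduce the boundary conditions to $J_k+P_k=\Pi_k+H_k$, collapse the gain in \eqref{eq:opt_policy_reverse} to $B_k^\top H_{k+1}$, and get \eqref{eq:symmetry} from $(J_k+P_k)^{-1}=(\Pi_k+H_k)^{-1}$ --- but your key step is organized differently. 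The paper inverts the recursion by hand: it writes $H_k+\barq_k=C_kH_{k+1}\bara_k$ with $C_k=\bara_k^\top+(H_k+\barq_k)\bara_k^{-1}B_kB_k^\top$, proves $C_k$ invertible via a Sherman--Morrison--Woodbury computation, and then solves for $H_{k+1}$, repeating the argument separately for $\Pi$. Your involution lemma packages both inversions (and both sandwich identities \eqref{eq:Pi_P_posi} and \eqref{eq:H_J}) into a single reusable algebraic fact about the maps $\calS_B$ and $\calT_B$, which makes the reversal of the recursion a one-line consequence and makes it transparent why the forward and backward Riccati equations are exact mirror images. What the paper's route buys is that it never needs to name the maps and produces the explicit intermediate expression \eqref{eq:Cinv} used elsewhere; what your route buys is a cleaner separation between the one nontrivial algebraic ingredient and the routine bookkeeping downstream. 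The final appeal to the covariance-propagation content of Corollary~\ref{cor:reverse} for $\bbE[\xi_k^*(\xi_k^*)^\top]^{-1}=J_k+P_k$ is at exactly the level of detail the paper itself uses, so no gap there either.
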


The converse of the above result also holds. That is, a solution $ \{\Pi_k,H_k\} $ to \eqref{eq:riccati}--\eqref{eq:boundary} can be obtained by a solution $ \{J_k, P_k\} $ to \eqref{eq:riccati_inv}--\eqref{eq:boundary_inv} as $ \Pi_k = P_k + \barq_k$, $H_k = J_k - \barq_k $.
This equivalence is utilized for analyzing the coupled Riccati equations~\eqref{eq:riccati}--\eqref{eq:boundary}; see the proof of Proposition~\ref{prop:positive} (Appendix~\ref{app:positive}).

By Propositions~\ref{prop:positive},~\ref{prop:opt_inv}, we get the explicit form of the optimal policy of Problem~\ref{prob:inverse}.
\begin{corollary}\label{cor:backward}
    Suppose that Assumptions~\ref{ass:invertibility},~\ref{ass:reachability} hold.
    \modi{Let $ \{H_{k,+}\} $ be a solution to \eqref{eq:H_riccati} with the terminal value \eqref{eq:H_terminal}.}
    Then, the unique optimal policy of Problem~\ref{prob:inverse} is given by
    \begin{align}
        &\varpi_k^* (v|\xi_{k+1}) = \calN\bigl( v  | \  B_k^\top H_{k+1,+}  \xi_{k+1},~ I - B_k^\top  H_{k+1,+} B_k \bigr) \label{eq:opt_policy_inv_thm}
    \end{align}
    for any $ k\in \bbra{0,\ft-1} $, $v\in \bbR^m$, and $\xi_{k+1}\in \bbR^n $. \modi{Moreover, \eqref{eq:symmetry} holds.}
     \hfill $ \diamondsuit $
\end{corollary}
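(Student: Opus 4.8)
The plan is to assemble Corollary~\ref{cor:backward} purely from earlier results, without any new computation. The target statement asserts three things under Assumptions~\ref{ass:invertibility} and~\ref{ass:reachability}: existence of the solution $\{H_{k,+}\}$ to \eqref{eq:H_riccati} with terminal value \eqref{eq:H_terminal}; that \eqref{eq:opt_policy_inv_thm} is the unique optimal policy of Problem~\ref{prob:inverse}; and that the symmetry \eqref{eq:symmetry} holds. So the proof is essentially a bookkeeping argument chaining Proposition~\ref{thm:riccati_solution}, Proposition~\ref{prop:positive}, and Proposition~\ref{prop:opt_inv}.

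First I would invoke Proposition~\ref{thm:riccati_solution}: under Assumptions~\ref{ass:invertibility},~\ref{ass:reachability}, the coupled Riccati system \eqref{eq:riccati}--\eqref{eq:boundary} admits the solution $\{\Pi_{k,+},H_{k,+}\}$ with terminal data $\Pi_\ft = Y_{\ft,+}$ and $H_\ft = \bsigma_\ft^{-1} - Y_{\ft,+}$; in particular $\{H_{k,+}\}_{k=0}^{\ft}$ exists on the whole interval and solves \eqref{eq:H_riccati} with the terminal value \eqref{eq:H_terminal}. Next I would apply Proposition~\ref{prop:positive}(i), which gives $I + B_k^\top \Pi_{k+1,+} B_k \succ 0$ and, more importantly here, $I - B_k^\top H_{k+1,+} B_k \succ 0$ for every $k\in\bbra{0,\ft-1}$. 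This is exactly the positive-definiteness hypothesis that Proposition~\ref{prop:opt_inv} requires in order to conclude optimality of the backward policy. Then I would feed $\{\Pi_{k,+},H_{k,+}\}$ into Proposition~\ref{prop:opt_inv}: since these satisfy \eqref{eq:riccati},~\eqref{eq:H_riccati} and the boundary conditions~\eqref{eq:boundary}, the proposition tells us that $J_k := H_{k,+} + \barq_k$, $P_k := \Pi_{k,+} - \barq_k$ solve \eqref{eq:riccati_inv},~\eqref{eq:riccati_inv_dual} with boundary conditions~\eqref{eq:boundary_inv}, and — using $I - B_k^\top H_{k+1,+} B_k \succ 0$ — that the policy \eqref{eq:opt_policy_inv} with $H_{k+1}$ replaced by $H_{k+1,+}$, i.e.\ exactly \eqref{eq:opt_policy_inv_thm}, is the unique optimal policy of Problem~\ref{prob:inverse}. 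The symmetry relation~\eqref{eq:symmetry} is also part of the conclusion of Proposition~\ref{prop:opt_inv}, so it transfers verbatim.

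There is essentially no genuine obstacle; the only point requiring a word of care is consistency of which solution branch is being used. One must make sure that the $\{H_{k,+}\}$ produced by Proposition~\ref{thm:riccati_solution} is the same object that Proposition~\ref{prop:positive}(i) and Proposition~\ref{prop:opt_inv} refer to — this is immediate because all three are phrased in terms of the solution with terminal value \eqref{eq:H_terminal} (equivalently, paired with $\Pi_{k,+}$), and uniqueness of solutions to the Riccati recursion given terminal data makes the identification unambiguous. A second, very minor point: Proposition~\ref{prop:opt_inv} is stated with the policy written using $H_{k+1}$; substituting $H_{k+1}=H_{k+1,+}$ gives \eqref{eq:opt_policy_inv_thm} literally, so no reconciliation of forms is needed. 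Thus the proof is a three-line citation chain, and I would write it as such.

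\begin{proof}
By Proposition~\ref{thm:riccati_solution}, under Assumptions~\ref{ass:invertibility},~\ref{ass:reachability} the Riccati equations~\eqref{eq:riccati},~\eqref{eq:H_riccati} with the boundary conditions~\eqref{eq:boundary} admit the solution $ \{\Pi_{k,+}, H_{k,+}\}_{k=0}^{\ft} $ determined by the terminal values~\eqref{eq:Pi_terminal},~\eqref{eq:H_terminal}; in particular $ \{H_{k,+}\}_{k=0}^{\ft} $ solves \eqref{eq:H_riccati} with the terminal value~\eqref{eq:H_terminal}. By Proposition~\ref{prop:positive}(i), $ I - B_k^\top H_{k+1,+} B_k \succ 0 $ for every $ k\in \bbra{0,\ft-1} $. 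Hence $ \{\Pi_{k,+}, H_{k,+}\} $ satisfies all the hypotheses of Proposition~\ref{prop:opt_inv}, which yields that the policy~\eqref{eq:opt_policy_inv} with $ H_{k+1} $ replaced by $ H_{k+1,+} $, i.e.\ \eqref{eq:opt_policy_inv_thm}, is the unique optimal policy of Problem~\ref{prob:inverse}, and that the distributional symmetry~\eqref{eq:symmetry} holds.
\end{proof}
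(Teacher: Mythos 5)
Your proof is correct and follows exactly the route the paper intends: the paper derives this corollary by simply citing Propositions~\ref{prop:positive} and~\ref{prop:opt_inv} (with Proposition~\ref{thm:riccati_solution} supplying existence of $\{\Pi_{k,+},H_{k,+}\}$ and the boundary conditions), which is precisely your citation chain. No gaps.
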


\revv{Conversely, the optimal policy \eqref{eq:opt_policy_thm} of the forward problem can be expressed in terms of $ P_k $. Let $ \{P_{k,+}\} $ be a solution to \eqref{eq:riccati_inv_dual} with the initial value $ P_{0,+} = Y_{0,+} - \barq_{0} $. Then, the policy \eqref{eq:opt_policy_thm} can be rewritten as
\begin{align}
    \pi_k^* (u|x_k) &= \calN\bigl( u |  -(\cro_k^\top + B_k^\top \bara_k^{-\top} P_{k,+}) x_k, \nonumber\\
    &\qquad\qquad I - B_k^\top \bara_k^{-\top} P_{k,+} \bara_k^{-1} B_k \bigr) . \label{eq:opt_forward_another}
\end{align}
The derivation is given in Appendix~\ref{app:reverse}.
}

\rev{The classical SB problem is equivalent to the density control of the continuous-time system $ \rmd x(t) = u(t)\rmd t + \rmd w(t) $ with the cost functional $ \bbE[ \int_0^1 \|u(t)\|^2 \rmd t] $, density constraints $ x(0) \sim \rho_0 $, $ x(1) \sim \rho_1 $, and a Wiener process $ w(t) $~\cite{Chen2021liaisons}. The optimal state density is given by $ \rho_{0\rightarrow 1}(t,x) = \psi(t,x) \what{\psi} (t,x) $, $ t\in [0,1] $, where $ \psi $ and $ \what{\psi} $ are solutions to partial differential equations coupled through their boundary conditions, called the \schr~system. Now, let us consider swapping the density constraints as $ x(0) \sim \rho_1 $, $ x(1) \sim \rho_0 $. Then, the new optimal state density is given by $ \rho_{1\rightarrow 0}(t,x) = \what{\psi} (1-t,x) \psi(1-t,x) = \rho_{0\rightarrow 1}(1-t,x) $, which is the time reversal of $ \rho_{0\rightarrow 1} $.
The relationship established in Corollary~\ref{cor:backward} can be seen as the generalization of the above result for the SB to discrete-time linear systems with general quadratic cost. The coupled Riccati equations~\eqref{eq:riccati},~\eqref{eq:H_riccati} (or \eqref{eq:riccati_inv},~\eqref{eq:riccati_inv_dual}) correspond to the \schr~system, and the fact that $ x_k^* $ and $ \xi_k^* $ have the same distribution corresponds to $ \rho_{1\rightarrow 0}(t,x) = \rho_{0\rightarrow 1}(1-t,x) $.
Although a characterization of Problem~\ref{prob:density_control} via an SB is not known for $ Q_k \neq 0 $, $ \cro_k \neq 0 $, the aforementioned relationship between the forward and backward MaxEnt density control problems still holds.}

\section{Unregularized Density Control as the Zero-Noise Limit of MaxEnt Density Control}\label{sec:zero-noise}
In this section, we reveal the relationship between the MaxEnt density control problem (Problem~\ref{prob:density_control}) and the following unregularized density control problem.
\begin{problem}[Unregularized density control problem]\label{prob:density_control_zero}
	Find a control process $ u = \{u_k\}_{k=0}^{\ft-1} $ that solves
	\begin{align}
			&\underset{u \in \calU}{\rm minimize}  && \bbE \Biggl[ \sum_{k=0}^{\ft-1} \frac{1}{2} \begin{bmatrix}
                x_k \\ u_k
            \end{bmatrix}^\top \rev{\begin{bmatrix}
                Q_k & \cro_k \\
                \cro_k^\top & R_k          
            \end{bmatrix}} \begin{bmatrix}
                x_k \\ u_k
            \end{bmatrix}    \Biggr] \label{eq:cost_zero} \\
			&{\rm subject~to} && \eqref{eq:system}, \ \eqref{eq:cov_constraint} ,\nonumber
	\end{align}
	where $ \calU $ denotes the set of all square-summable control processes adapted to $ \{x_k\} $.
	\hfill $ \diamondsuit $
\end{problem}

To investigate the above problem, we first present the optimal policy of Problem~\ref{prob:density_control} for general $ R_k\succ 0 $, $\varepsilon> 0 $. Recall that Problem~\ref{prob:density_control} can be transformed into \eqref{prob:transform}.
Let $ B_{R,k} := B_k R_k^{-1/2} $, $ \Pi_k^{(\varepsilon)} := \varepsilon \Pi_k$, $H_k^{(\varepsilon)} := \varepsilon H_k $. Then, the Riccati equations to be solved associated with \eqref{prob:transform} and the boundary conditions~\eqref{eq:boundary} are given by
\begin{align}
	\Pi_k^{(\varepsilon)} &= \bara_k^\top \Pi_{k+1}^{(\varepsilon)} \bara_k - \bara_k^\top \Pi_{k+1}^{(\varepsilon)} B_{R,k} \nonumber\\
	&\quad\times (I + B_{R,k}^\top \Pi_{k+1}^{(\varepsilon)} B_{R,k})^{-1}  B_{R,k}^\top \Pi_{k+1}^{(\varepsilon)} \bara_k + \barq_k, \nonumber \\
			H_k^{(\varepsilon)} &= \bara_k^\top H_{k+1}^{(\varepsilon)} \bara_k - \bara_k^\top H_{k+1}^{(\varepsilon)} B_{R,k} \nonumber\\
			&\quad\times (-I + B_{R,k}^\top H_{k+1}^{(\varepsilon)} B_{R,k})^{-1}  B_{R,k}^\top H_{k+1}^{(\varepsilon)} \bara_k - \barq_k , \nonumber\\
			\varepsilon \bsigma_0^{-1} &= \Pi_0^{(\varepsilon)} + H_0^{(\varepsilon)}, ~~ \varepsilon \bsigma_\ft^{-1} = \Pi_\ft^{(\varepsilon)} + H_\ft^{(\varepsilon)} ,\nonumber
\end{align}
which take the same form as \eqref{eq:riccati}--\eqref{eq:boundary}.
\fin{Similarly} to \eqref{eq:transition_M}, denote by $ \Phi_{M(R)} (k,l) $ the state-transition matrix for
\begin{equation*}
M_k(R_k) :=
\begin{bmatrix}
	\bara_k + B_k R_k^{-1} B_k^\top \bara_k^{-\top} \barq_k & - B_k R_k^{-1} B_k^\top  \bara_k^{-\top} \\
	- \bara_k^{-\top} \barq_k & \bara_k^{-\top}
\end{bmatrix}.
\end{equation*}
Partition $ \Phi_{M(R)} (k,l) $ as in \eqref{eq:split} and denote the {$ (i,j) $-th} block of $ \Phi_{M(R)} (k,l) $ by $ \Phi_{R,ij}(k,l) $.
Let $ \phi_{R,ij} := \Phi_{R,ij} (\ft,0) $, $ \varphi_{R,ij} := \Phi_{R,ij} (0,\ft) $.
The reachability Gramian for the system~\eqref{eq:system_trans} is written as
\begin{align}
    &\sum_{k = k_0}^{k_1-1} \Phi_\bara(k_1,k+1) \varepsilon B_{R,k} B_{R,k}^\top \Phi_\bara(k_1,k+1)^\top \nonumber\\
    &= \varepsilon\Gamma(k_1,k_0) \diag \left(R_{k_0}^{-1},R_{k_0 + 1}^{-1},\ldots,R_{k_1 -1}^{-1} \right) \Gamma(k_1,k_0)^\top, \label{eq:gramian_R}
\end{align}
where
\begin{align*}
    &\Gamma(k_1,k_0) \\
    &:= [\Phi_\bara (k_1,k_0 + 1)B_{k_0} ~~ \Phi_\bara (k_1,k_0 + 2)B_{k_0 + 1} \ \cdots \ B_{k_1 - 1} ],
\end{align*}
and $ \diag (R_{k_0}^{-1},R_{k_0 + 1}^{-1},\ldots,R_{k_1 -1}^{-1}) $ denotes the block diagonal matrix with diagonal entries $ \{R_k^{-1}\}_{k=k_0}^{k_1 -1} $. Hence, the invertibility of $ \calR(k_1,k_0) = \Gamma (k_1,k_0)\Gamma (k_1,k_0)^\top $ implies that \eqref{eq:gramian_R} is invertible.
Then, the following result follows immediately from Theorem~\ref{thm:opt_policy}.
\begin{corollary}\label{cor:general_opt}
	Suppose that Assumptions~\ref{ass:invertibility},~\ref{ass:reachability} hold. Then, the unique optimal policy of Problem~\ref{prob:density_control} is given by
	\begin{align}
		&\pi_k^* (u|x_k) = \calN\bigl( u  |  -(R_k + B_k^\top \Pi_{k+1}^{(\varepsilon)} B_k)^{-1} \nonumber\\
        &\times (B_k^\top \Pi_{k+1}^{(\varepsilon)} A_k + \cro_k^\top ) x_k, ~ \varepsilon(R_k + B_k^\top \Pi_{k+1}^{(\varepsilon)} B_k)^{-1}\bigr), \nonumber\\
		&\quad k\in \bbra{0,\ft-1},\ u\in \bbR^m, \ x_k\in \bbR^n , \label{eq:opt_policy_general}
 \end{align}
 where $ \{\Pi_{k}^{(\varepsilon)}\} $ is the solution to \eqref{eq:riccati_general} with the terminal value
 \begin{align}
	&\Pi_{\ft} = - \varphi_{R,12}^{-1} \varphi_{R,11} + \frac{\varepsilon}{2} \bsigma_\ft^{-1}\nonumber\\
      &+ \bsigma_\ft^{-1/2} \left( \frac{\varepsilon^2}{4}I + \bsigma_\ft^{1/2} \varphi_{R,12}^{-1} \bsigma_0 \varphi_{R,12}^{-\top} \bsigma_\ft^{1/2}  \right)^{1/2} \bsigma_\ft^{-1/2} . \label{eq:Pi_terminal_general}
 \end{align}
 \hfill $ \diamondsuit $
\end{corollary}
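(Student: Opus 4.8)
The plan is to reduce Corollary~\ref{cor:general_opt} to Theorem~\ref{thm:opt_policy} by unwinding the change of variables already set up in the excerpt. First I would invoke the reformulation $\eqref{prob:transform}$: Problem~\ref{prob:density_control} with general $R_k \succ 0$ and $\varepsilon > 0$ is equivalent to the normalized problem in the variables $\wtilde u_k = R_{\varepsilon,k}^{1/2} u_k$ with weight matrices $Q_{\varepsilon,k}$, cross term $\cro_{R,\varepsilon,k}$, identity control weight, unit temperature, and dynamics driven by $\sqrt{\varepsilon}\,B_k R_k^{-1/2} = B_{R,k}\sqrt{\varepsilon}$ — equivalently, the $\bara$-matrix is unchanged (since $\bara_k = A_k - B_k R_k^{-1}\cro_k^\top$ is invariant under this scaling) while the input matrix becomes $\sqrt{\varepsilon}\,B_{R,k}$ and $\barq_k$ is unchanged. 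Then Theorem~\ref{thm:opt_policy} applies verbatim to the normalized problem, provided its two hypotheses (Assumptions~\ref{ass:invertibility} and~\ref{ass:reachability}) hold in the normalized coordinates.

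Next I would check that the hypotheses transfer. Assumption~\ref{ass:invertibility} is about invertibility of $\bara_k$, which is literally the same matrix, so it is immediate. Assumption~\ref{ass:reachability} is about invertibility of the reachability Gramians $\calR(k,0)$, $\calR(\ft,k)$; in the normalized problem these become the Gramians of the pair $(\bara_k, \sqrt{\varepsilon}\,B_{R,k})$, which by \eqref{eq:gramian_R}/\eqref{eq:gramian_R}-type computation equal $\varepsilon\,\Gamma(k_1,k_0)\,\diag(R_k^{-1})\,\Gamma(k_1,k_0)^\top$. Since each $R_k \succ 0$ and $\varepsilon > 0$, this product is invertible precisely when $\Gamma(k_1,k_0)\Gamma(k_1,k_0)^\top = \calR(k_1,k_0)$ is — this is exactly the remark made just before the corollary statement. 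Hence Assumption~\ref{ass:reachability} for the original data implies the analogous condition for the normalized problem.

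Then I would translate the conclusion back. Theorem~\ref{thm:opt_policy} gives the normalized optimal policy $\wtilde\pi_k^*(\wtilde u \mid x_k) = \calN(\wtilde u \mid -(I + B_{R,k}^\top \wtilde\Pi_{k+1} B_{R,k}\cdot\varepsilon)^{-1}(\cdots)x_k, (\cdots)^{-1})$ where $\wtilde\Pi_k$ solves the normalized Riccati equation with a terminal value of the form \eqref{eq:Y_terminal} built from $\Phi_{M(R)}$ and $\bsigma_0, \bsigma_\ft$. Setting $\Pi_k^{(\varepsilon)} := \varepsilon\,\wtilde\Pi_k$ (as in the excerpt's definition) and rescaling: the normalized Riccati recursion in $\wtilde\Pi_k$ with input $\sqrt{\varepsilon}B_{R,k}$ becomes, after multiplying through by $\varepsilon$, exactly \eqref{eq:riccati_general} for $\Pi_k^{(\varepsilon)}$ with control weight $R_k$ and input $B_k$; and the normalized terminal value \eqref{eq:Y_terminal} with $\bsigma_0,\bsigma_\ft$ becomes, after multiplying by $\varepsilon$ and using that $M_k$ scales so its state-transition blocks become $\Phi_{R,ij}$ with the factor $\varepsilon$ pulled inside the square root, precisely \eqref{eq:Pi_terminal_general}. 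Finally, pushing $\wtilde u_k = R_{\varepsilon,k}^{1/2} u_k = \varepsilon^{-1/2} R_k^{1/2} u_k$ back through the Gaussian: the mean scales by $\varepsilon^{1/2} R_k^{-1/2}$ and the covariance by $\varepsilon R_k^{-1/2}(\cdot)R_k^{-1/2}$, and using $B_{R,k} = B_k R_k^{-1/2}$ the gain collapses to $-(R_k + B_k^\top\Pi_{k+1}^{(\varepsilon)}B_k)^{-1}(B_k^\top\Pi_{k+1}^{(\varepsilon)}A_k + \cro_k^\top)$ and the covariance to $\varepsilon(R_k + B_k^\top\Pi_{k+1}^{(\varepsilon)}B_k)^{-1}$, yielding \eqref{eq:opt_policy_general}. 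Uniqueness is inherited from the uniqueness clause of Theorem~\ref{thm:opt_policy} since the change of variables is a bijection on policies.

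The only mildly delicate bookkeeping — and the step I expect to be the real obstacle — is verifying that the terminal-value formula \eqref{eq:Y_terminal} transforms cleanly into \eqref{eq:Pi_terminal_general}: one must track how $\varphi_{12}, \varphi_{11}$ for the normalized $M_k$ relate to $\varphi_{R,12}, \varphi_{R,11}$, confirm that the $\varepsilon$ factors coming from $\Pi_k^{(\varepsilon)} = \varepsilon\wtilde\Pi_k$ distribute exactly as the $\frac{\varepsilon}{2}\bsigma_\ft^{-1}$ and $\frac{\varepsilon^2}{4}I$ terms in \eqref{eq:Pi_terminal_general}, and check that the $\pm$ branch selected by Theorem~\ref{thm:opt_policy} (the "$+$" branch, the one satisfying the positive-definiteness condition of Proposition~\ref{prop:opt} via Proposition~\ref{prop:positive}) is the one written in \eqref{eq:Pi_terminal_general}. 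All of this is routine algebra once the scaling dictionary $\{\wtilde\Pi \leftrightarrow \varepsilon^{-1}\Pi^{(\varepsilon)},\ \sqrt{\varepsilon}B_{R,k} \leftrightarrow \text{normalized input}\}$ is written down explicitly, so the corollary "follows immediately from Theorem~\ref{thm:opt_policy}" as claimed.
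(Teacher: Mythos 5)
Your reduction is exactly the paper's argument: the paper likewise passes to the normalized problem \eqref{prob:transform}, sets $\Pi_k^{(\varepsilon)} = \varepsilon\Pi_k$, observes via \eqref{eq:gramian_R} that Assumption~\ref{ass:reachability} transfers, and then states that the corollary ``follows immediately from Theorem~\ref{thm:opt_policy}''; your scaling dictionary and back-translation of the Gaussian policy fill in the same bookkeeping. One small slip: in the normalized problem the effective state weight is $\barq_k/\varepsilon = Q_{\varepsilon,k} - \cro_{R,\varepsilon,k}\cro_{R,\varepsilon,k}^\top$, not $\barq_k$ ``unchanged'' as you write — this factor of $1/\varepsilon$ is precisely what makes the recursion for $\varepsilon\Pi_k$ reproduce \eqref{eq:riccati_general} with the $+\barq_k$ term and makes the $\varepsilon$'s land as $\tfrac{\varepsilon}{2}\bsigma_\ft^{-1}$ and $\tfrac{\varepsilon^2}{4}I$ in \eqref{eq:Pi_terminal_general}, so your later steps implicitly use the correct value.
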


Unlike the case without the final density constraint in Proposition~\ref{prop:LQ}, it is not trivial that the optimal control of Problem~\ref{prob:density_control} converges to the solution to the unregularized problem (Problem~\ref{prob:density_control_zero}) as $ \varepsilon \searrow 0 $ because $ \Pi_k^{(\varepsilon)} $ depends on $ \varepsilon $.
Nevertheless, the following result ensures the convergence of the MaxEnt density control to the unregularized density control as the regularization vanishes.
\begin{theorem}\label{thm:zero_noise}
	Suppose that Assumptions~\ref{ass:invertibility},~\ref{ass:reachability} hold. Then, as $ \varepsilon \searrow 0 $, the policy \eqref{eq:opt_policy_general} converges to the unique optimal control law of Problem~\ref{prob:density_control_zero} given by
	\begin{align}
		u_k &= -(R_k + B_k^\top \Pi_{k+1}^{(0)} B_k)^{-1} (B_k^\top \Pi_{k+1}^{(0)} A_k + \cro_k^\top ) x_k, \nonumber\\
		&\quad k\in \bbra{0,\ft-1}, \ x_k\in \bbR^n , \label{eq:opt_policy_zero}
 \end{align}
 where $ \{\Pi_{k}^{(0)}\} $ is the solution to \eqref{eq:riccati_general} with the terminal value
 \begin{align}
	\Pi_{\ft} &= - \varphi_{R,12}^{-1} \varphi_{R,11}\nonumber\\
      &\quad + \bsigma_\ft^{-1/2} \left( \bsigma_\ft^{1/2} \varphi_{R,12}^{-1} \bsigma_0 \varphi_{R,12}^{-\top} \bsigma_\ft^{1/2}  \right)^{1/2} \bsigma_\ft^{-1/2} . \label{eq:Pi_terminal_zero}
 \end{align}
 \cyan{That is, the mean and covariance of \eqref{eq:opt_policy_general} converge to the right-hand side of \eqref{eq:opt_policy_zero} and $ 0 $ as $ \varepsilon \searrow 0 $, respectively.}
 \hfill $ \diamondsuit $
\end{theorem}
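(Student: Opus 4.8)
The plan is to reduce the whole statement to a single fact: the Riccati equation \eqref{eq:riccati_general} with the terminal value \eqref{eq:Pi_terminal_zero} admits a solution $ \{\Pi_k^{(0)}\}_{k=0}^{\ft} $ for which $ V_k^{(0)} := R_k + B_k^\top \Pi_{k+1}^{(0)} B_k \succ 0 $ on $ \bbra{0,\ft-1} $. Granting this, the convergence of \eqref{eq:opt_policy_general} will follow from a continuity-plus-induction argument, and the identification of \eqref{eq:opt_policy_zero} as the unique optimal control of Problem~\ref{prob:density_control_zero} will follow from a completion-of-squares argument. Throughout I abbreviate $ V_k^{(\varepsilon)} := R_k + B_k^\top \Pi_{k+1}^{(\varepsilon)} B_k $, which is positive definite for every $ \varepsilon>0 $ by Corollary~\ref{cor:general_opt} together with Proposition~\ref{prop:positive}(i).

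First I would establish $ \Pi_\ft^{(\varepsilon)} \to \Pi_\ft^{(0)} $ as $ \varepsilon\searrow0 $. Under Assumption~\ref{ass:reachability} the block $ \varphi_{R,12} $ is invertible, so $ \bsigma_\ft^{1/2}\varphi_{R,12}^{-1}\bsigma_0\varphi_{R,12}^{-\top}\bsigma_\ft^{1/2}\succ0 $ because $ \bsigma_0\succ0 $; since the positive-semidefinite square root is continuous, letting $ \varepsilon\searrow0 $ in \eqref{eq:Pi_terminal_general} yields \eqref{eq:Pi_terminal_zero}, and $ \Pi_\ft^{(0)} $ is symmetric as a limit of symmetric matrices. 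Next I would show that $ \{\Pi_k^{(0)}\} $ is well defined with $ V_k^{(0)}\succ0 $. Following Proposition~\ref{prop:linear}, put $ [X_k^{(0)\top}\ Y_k^{(0)\top}]^\top := \Phi_{M(R)}(k,\ft)\,[I\ \Pi_\ft^{(0)\top}]^\top $; then $ X_\ft^{(0)}=I $, and a short computation gives $ X_0^{(0)} = \varphi_{R,12}\bsigma_\ft^{-1/2}(\bsigma_\ft^{1/2}\varphi_{R,12}^{-1}\bsigma_0\varphi_{R,12}^{-\top}\bsigma_\ft^{1/2})^{1/2}\bsigma_\ft^{-1/2} $, a product of invertible factors. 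The argument that proves invertibility of $ X_k $ in Proposition~\ref{thm:riccati_solution} (Appendix~\ref{app:solution_riccati}) — which uses only the reachability Gramians of Assumption~\ref{ass:reachability} and the invertibility of $ X_0 $ and $ X_\ft $ — then carries over, in a slightly simplified form since the $ \tfrac14 I $ and $ \tfrac12\bsigma_\ft^{-1} $ terms are now absent, to give $ X_k^{(0)} $ invertible for all $ k $. Hence, by Proposition~\ref{prop:linear}(ii) applied with $ B_k $ replaced by $ B_{R,k} $, the matrices $ \Pi_k^{(0)} := Y_k^{(0)}(X_k^{(0)})^{-1} $ solve \eqref{eq:riccati_general}, so $ V_k^{(0)} $ is invertible; letting $ \varepsilon\searrow0 $ in $ V_k^{(\varepsilon)}\succ0 $ gives $ V_k^{(0)}\succeq0 $, and a symmetric invertible positive-semidefinite matrix is positive definite. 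I expect this to be the main obstacle: the invertibility of $ X_k^{(0)} $ cannot be obtained by merely passing to the limit from the case $ \varepsilon>0 $ (a limit of nonzero determinants can vanish) and genuinely requires the reachability structure — this is essentially why Assumption~\ref{ass:reachability} is needed.

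With $ V_k^{(0)}\succ0 $ available, the convergence is routine. The Riccati recursion $ \Pi_{k+1}\mapsto\Pi_k $ in \eqref{eq:riccati_general} is continuous at $ \Pi_{k+1}^{(0)} $ because matrix inversion is continuous where $ V_k^{(0)} $ is invertible; hence a backward induction from $ k=\ft $ gives $ \Pi_k^{(\varepsilon)}\to\Pi_k^{(0)} $ and $ V_k^{(\varepsilon)}\to V_k^{(0)} $. Consequently the mean of \eqref{eq:opt_policy_general} converges to the right-hand side of \eqref{eq:opt_policy_zero}, while its covariance $ \varepsilon(V_k^{(\varepsilon)})^{-1}\to0 $. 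For feasibility of \eqref{eq:opt_policy_zero} I would argue as follows: the state covariance $ \Sigma_k^{(\varepsilon)} $ along the optimal process of \eqref{eq:opt_policy_general} obeys a recursion whose closed-loop matrix converges by the above and whose additive term equals $ \varepsilon B_k(V_k^{(\varepsilon)})^{-1}B_k^\top = O(\varepsilon) $; therefore $ \Sigma_k^{(\varepsilon)} $ converges to the state covariance $ \Sigma_k^{(0)} $ produced by the deterministic feedback \eqref{eq:opt_policy_zero} started from $ \bsigma_0 $. Since $ \Sigma_\ft^{(\varepsilon)}=\bsigma_\ft $ for every $ \varepsilon>0 $ by Corollary~\ref{cor:general_opt}, we get $ \Sigma_\ft^{(0)}=\bsigma_\ft $; because $ x_0\sim\calN(0,\bsigma_0) $ and the dynamics are linear, this means \eqref{eq:opt_policy_zero} steers $ \calN(0,\bsigma_0) $ to $ \calN(0,\bsigma_\ft) $ and is feasible for Problem~\ref{prob:density_control_zero}.

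Finally, for optimality and uniqueness I would complete the square in \eqref{eq:cost_zero} with $ \{\Pi_k^{(0)}\} $, adding and subtracting the telescoping term $ \tfrac12 x_\ft^\top\Pi_\ft^{(0)}x_\ft-\tfrac12 x_0^\top\Pi_0^{(0)}x_0 $ exactly as in the proof of Proposition~\ref{prop:LQ} but without the entropy terms. This rewrites the cost of an arbitrary feasible $ u\in\calU $ as the expectation of $ \sum_{k=0}^{\ft-1}\tfrac12\|u_k+(V_k^{(0)})^{-1}(B_k^\top\Pi_{k+1}^{(0)}A_k+\cro_k^\top)x_k\|_{V_k^{(0)}}^2 $ plus the constant $ \tfrac12\trace(\Pi_0^{(0)}\bsigma_0)-\tfrac12\trace(\Pi_\ft^{(0)}\bsigma_\ft) $, which is fixed by the constraints \eqref{eq:cov_constraint}. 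Since $ V_k^{(0)}\succ0 $, this quantity is minimized if and only if $ u_k=-(V_k^{(0)})^{-1}(B_k^\top\Pi_{k+1}^{(0)}A_k+\cro_k^\top)x_k $ almost surely for every $ k $, i.e. precisely by the control law \eqref{eq:opt_policy_zero}; this law lies in $ \calU $ (it is linear in the Gaussian state $ x_k $, hence square-summable and adapted) and is feasible by the preceding paragraph, so it is the unique optimal control of Problem~\ref{prob:density_control_zero}. Together with the convergence established above, this proves the theorem.
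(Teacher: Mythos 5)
Your proposal is correct and its skeleton matches the paper's: reduce Problem~\ref{prob:density_control_zero} to exhibiting a solution of \eqref{eq:riccati_general} that is compatible with the noiseless covariance recursion \eqref{eq:sigma_zero} and the boundary conditions \eqref{eq:cov} (completion of squares gives optimality and uniqueness, exactly as you write), then identify that solution as the $\varepsilon\searrow 0$ limit of the $\{\Pi_k^{(\varepsilon)}\}$ by passing to the limit in \eqref{eq:cov_eps}. The one substantive step, establishing $R_k+B_k^\top\Pi_{k+1}^{(0)}B_k\succ 0$, is where you genuinely diverge. The paper proves it \emph{directly} via the Riccati comparison argument of Appendix~\ref{app:positive}: it bounds $\Pi_{k+1}^{(0)}\succeq \esu_{R,k+1}=-\bigl(\Phi_{R,11}(0,k+1)^{-1}\Phi_{R,12}(0,k+1)\bigr)^{-1}$ for $k\ge k_\rmr$ and invokes the backward problem of Section~\ref{sec:reverse} for $k< k_\rmr$. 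You instead extract only \emph{invertibility} of $V_k^{(0)}$ from the symplectic representation (the Appendix~\ref{app:solution_riccati} argument applied to the degenerate terminal value, where $X_0^{(0)}=\varphi_{R,12}\bsigma_\ft^{-1/2}\bigl(\bsigma_\ft^{1/2}\varphi_{R,12}^{-1}\bsigma_0\varphi_{R,12}^{-\top}\bsigma_\ft^{1/2}\bigr)^{1/2}\bsigma_\ft^{-1/2}$ is manifestly invertible), and then upgrade to positive definiteness by taking the limit of $V_k^{(\varepsilon)}\succ 0$. That route buys you a cleaner argument that avoids rerunning the two-sided forward/backward comparison at $\varepsilon=0$, and it makes explicit the backward-induction continuity step $\Pi_k^{(\varepsilon)}\to\Pi_k^{(0)}$ that the paper leaves implicit; what it costs is the need to check that the Appendix~\ref{app:solution_riccati} machinery (in particular the analogue of \eqref{eq:correct} and the sign argument after \eqref{eq:left_hand}) survives when $\tfrac14 I+\cdots$ degenerates to $\bsigma_\ft^{1/2}\varphi_{R,12}^{-1}\bsigma_0\varphi_{R,12}^{-\top}\bsigma_\ft^{1/2}$ — it does, precisely because $\bsigma_0\succ 0$ keeps that square root strictly positive definite, but you should state this. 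One small ordering caveat: your sentence ``letting $\varepsilon\searrow 0$ in $V_k^{(\varepsilon)}\succ 0$ gives $V_k^{(0)}\succeq 0$'' already presupposes $\Pi_{k+1}^{(\varepsilon)}\to\Pi_{k+1}^{(0)}$, which you only justify in the following paragraph via the continuity induction (which, fortunately, needs only invertibility of the $V_j^{(0)}$, not their positivity); reorder so the limit precedes the semidefiniteness claim and the logic is airtight.
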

\begin{proof}
	By the same argument as for Problem~\ref{prob:density_control}, if there exists a solution $ \{\Pi_k\} $ to \eqref{eq:riccati_general} satisfying
	\begin{align}
		\Sigma_{k+1} &= \calA_k(\Pi_{k+1}) \Sigma_k \calA_k(\Pi_{k+1})^\top, \label{eq:sigma_zero} \\
	 \Sigma_0 &= \bsigma_0, \ \Sigma_\ft = \bsigma_\ft , \label{eq:cov}
	\end{align}
	then \eqref{eq:opt_policy_zero} with $ \Pi_{k+1}^{(0)} = \Pi_{k+1} $ is the unique optimal control of Problem~\ref{prob:density_control_zero}. 
	Recall that for any $ \varepsilon > 0 $, the solution $ \{\Pi_k^{(\varepsilon) }\} $ to \eqref{eq:riccati_general} given by the terminal value \eqref{eq:Pi_terminal_general} satisfies
	\begin{align}
		\Sigma_{k+1}^{(\varepsilon)} &= \calA_k(\Pi_{k+1}^{(\varepsilon)}) \Sigma_k^{(\varepsilon)} \calA_k(\Pi_{k+1}^{(\varepsilon)})^\top \nonumber\\
		&\quad + \varepsilon B_k (R_k + B_k^\top \Pi_{k+1}^{(\varepsilon)} B_k)^{-1} B_k^\top, \label{eq:cov_eps}\\
		\Sigma_0^{(\varepsilon)} &= \bsigma_0, \ \Sigma_\ft^{(\varepsilon)} = \bsigma_\ft . \nonumber
	\end{align}
	Let $  \esu_{R,k} := - \Phi_{R,12} (0,k)^{-1} \Phi_{R,11} (0,k) $ for $ k \in \bbra{k_\rmr,\ft} $, \cyan{where the invertibility of $ \Phi_{R,12} (0,k) $ for $ k\in \bbra{k_\rmr,\ft} $ is ensured by Assumption~\ref{ass:reachability} and Lemma~\ref{lem:T_invertible}.} Then, similar to \fin{\eqref{eq:BPiB_k_positive}} in Appendix~\ref{app:positive}, we have for any $ k \in \fin{\bbra{k_\rmr , \ft-1}} $,
	\begin{align}
		R_k +B_{k}^\top \Pi_{k+1}^{(0)} B_{k} &\succeq R_k  +B_{k}^\top \esu_{R,k+1} B_{k} \succ 0 .\label{eq:N-1_positive_zero}
	\end{align}
	Hence, $ (R_k + B_k^\top \Pi_{k+1}^{(0)} B_k)^{-1} $ is finite for \fin{$ k \in \bbra{k_\rmr , \ft-1} $}. The same can be shown for \fin{$ k \in \bbra{0 , k_\rmr-1} $}.
	Therefore, the second term of \eqref{eq:cov_eps} goes to zero as $ \varepsilon \searrow 0 $, which \fin{means} $ \{\Pi_k \} = \{\Pi_k^{(0)} \} $ satisfies \eqref{eq:sigma_zero},~\eqref{eq:cov}, and the covariance of \eqref{eq:opt_policy_general} tends to zero as $ \varepsilon \searrow 0 $.
	This completes the proof.
\end{proof}

\rev{
\begin{rmk}\label{rmk:previous}
    For a simple case where $ Q_k \equiv 0$, $\cro_k \equiv 0 $, $ R_k \equiv I $, $ \ft = 1 $, and $ B_0 $ is invertible, we can verify that our solution aligns with that of \cite{Goldshtein2017}.
	In this case, the optimal initial state feedback control given by \cite{Goldshtein2017} is
	\begin{align}
        u_0 &=- (I + B_0^\top \Lambda B_0)^{-1} B_0^\top \Lambda A_0 x_0 , \label{eq:previous}
	\end{align}
	where $ \Lambda $ is the solution to the algebraic Riccati equation:
	\begin{align}
		&B_0^{-\top} B_0^{-1} \bsigma_\ft \Lambda + \Lambda \bsigma_\ft B_0^{-\top} B_0^{-1} + \Lambda \bsigma_\ft \Lambda \nonumber\\
        &+ B_0^{-\top} B_0^{-1} (\bsigma_\ft - A_0 \bsigma_0 A_0^{\top}) B_0^{-\top} B_0^{-1} = 0. \label{eq:algebra_riccati}
	\end{align}
	Moreover, it is straightforward to check that $ \Lambda = \Pi_1^{(0)} $ is the solution to \eqref{eq:algebra_riccati}. Consequently, \eqref{eq:opt_policy_zero} coincides with \eqref{eq:previous}.
    \hfill $ \diamondsuit $
\end{rmk}
}

\section{Example}\label{sec:example}
Finally, we illustrate the obtained results via a numerical example.
The parameters are given by
\begin{align}
	&A_k =
	\begin{bmatrix}
		1.1 & 0.3 \\
		0.1 & 1.2
	\end{bmatrix}, ~
	B_k =
	\begin{bmatrix}
		0.05 \\ 0.1
	\end{bmatrix}, ~ R_k = 1 , ~\cro_k = \begin{bmatrix}
        0.8 \\ 0.4 
    \end{bmatrix},  \forall k, \nonumber\\
	&\rev{\bar{\mu}_0 = \begin{bmatrix} 1 \\ 1 \end{bmatrix}, \ \bar{\mu}_\ft = \begin{bmatrix}
        0 \\ -4
    \end{bmatrix},} \ \bsigma_0 =
	\begin{bmatrix}
		7 & 1\\ 1 & 5
	\end{bmatrix}, \
	\bsigma_\ft = 0.5 I, \nonumber\\
    &\varepsilon = 1, \ \ft = 30, \label{eq:ex_parameter}
\end{align}
which fulfill Assumptions~\ref{ass:invertibility},~\ref{ass:reachability}.
Fig.~\ref{fig:opt_trajectory} describes sample paths of the optimal state process $ \{x_k^*\} $ and the control process $ \{u_k\} $ obtained by \modi{Corollary~\ref{cor:opt_policy_general}} for \modi{$ Q_k \equiv I, 10I $}.
The black ellipses are the so-called $ 3\sigma $ covariance ellipses given by
\begin{equation}\label{eq:ellipse}
	\{ x\in \bbR^2 : (x-\mu_k)^\top \Sigma_{k}^{-1} (x-\mu_k) = 3^2 \} .
\end{equation}
As can be seen, for the large weight \modi{$ Q_k = 10I $}, the optimal policy steers the samples near the origin to reduce the state cost.
Despite the difference in the transient behavior, the state is successfully steered to the same target density in both cases.

\begin{figure}[!t]
	\begin{minipage}[b]{1.0\linewidth}
		\centering
		\includegraphics[keepaspectratio, scale=0.33]
		{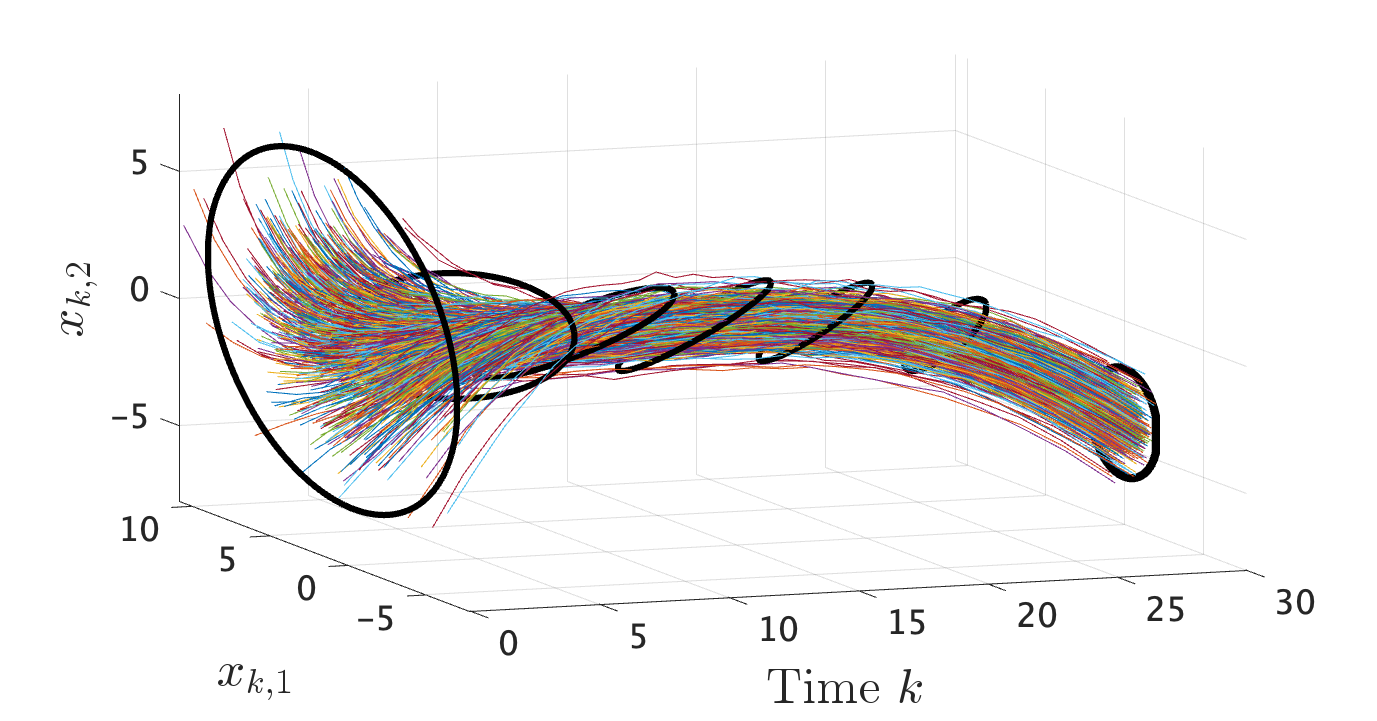}
		\subcaption{$ Q_k = I $}\label{fig:opt_state_Q0}
	\end{minipage}
	\begin{minipage}[b]{1.0\linewidth}
		\centering
		\includegraphics[keepaspectratio, scale=0.33]
		{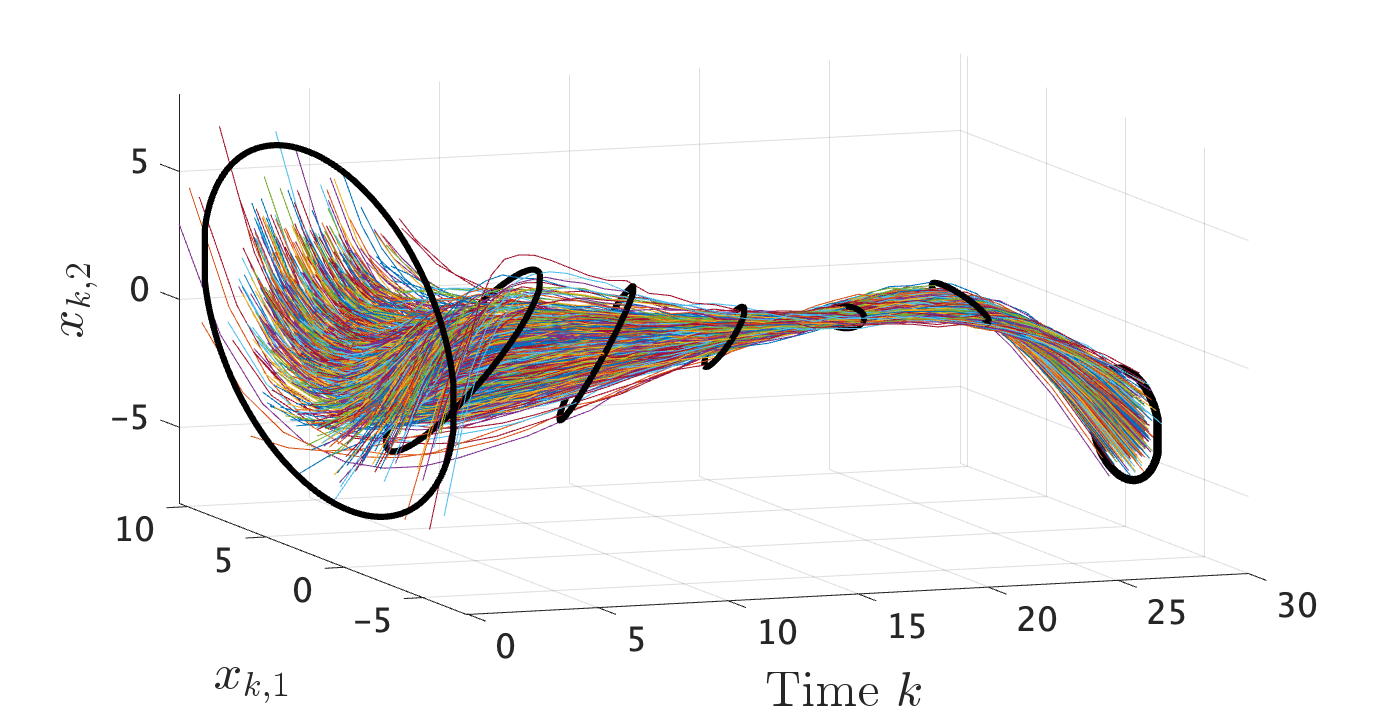}
		\subcaption{$ Q_k = 10I $}\label{fig:opt_state_Q50}
	\end{minipage}

	\vspace{0.5cm}
	\begin{minipage}[b]{0.49\linewidth}
		\centering
		\includegraphics[keepaspectratio, scale=0.24]
		{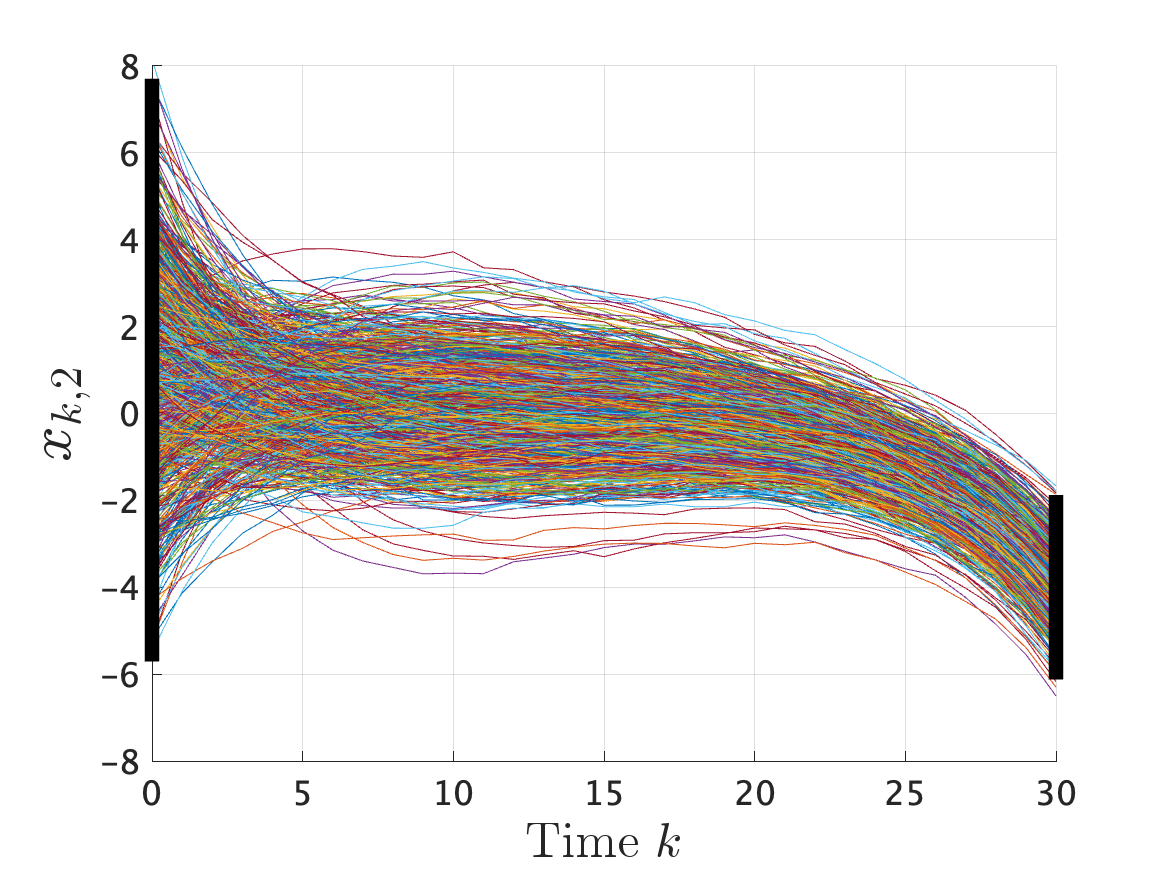}
		\subcaption{$Q_k = I $}\label{fig:opt_state_x2_Q0}
	\end{minipage}
	\begin{minipage}[b]{0.48\linewidth}
		\centering
		\includegraphics[keepaspectratio, scale=0.24]
		{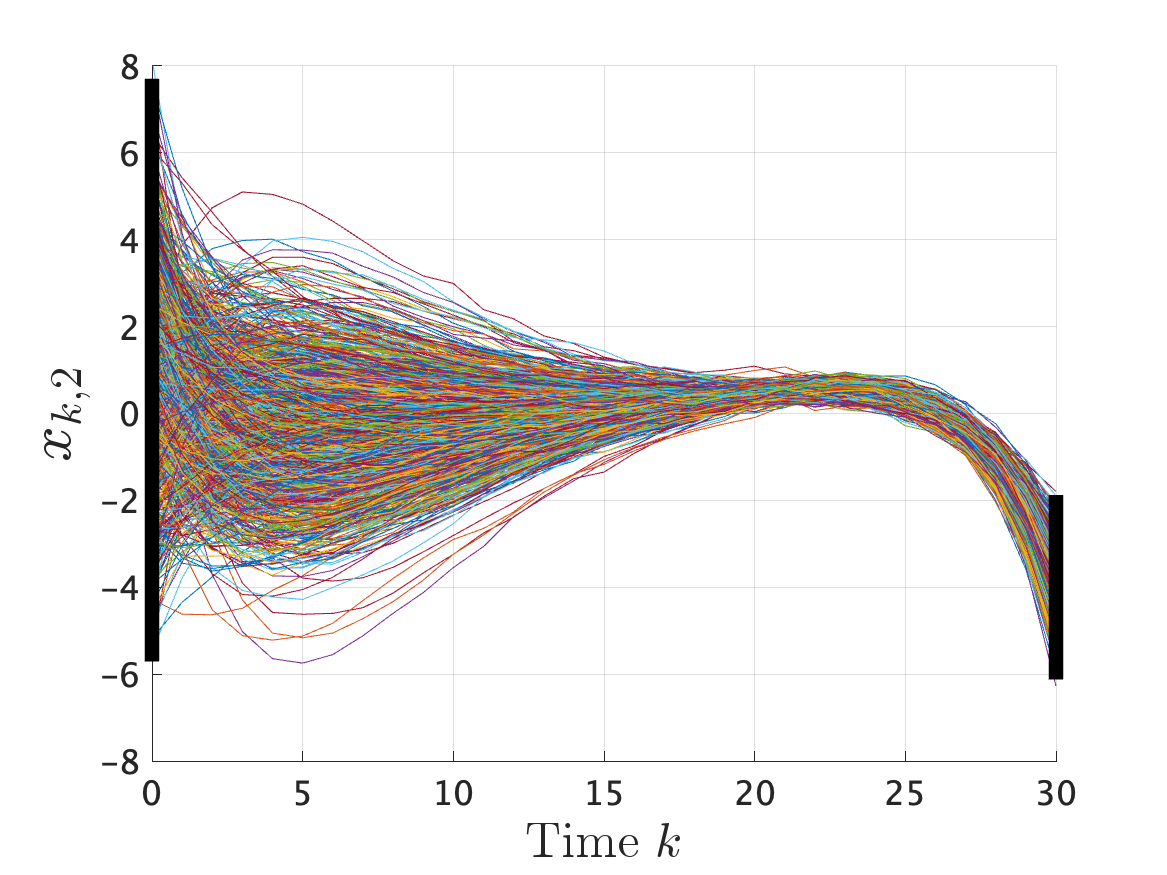}
		\subcaption{$ Q_k = 10I $}\label{fig:opt_state_x2_Q50}
	\end{minipage}

	\vspace{0.5cm}
	\begin{minipage}[b]{0.49\linewidth}
		\centering
		\includegraphics[keepaspectratio, scale=0.24]
		{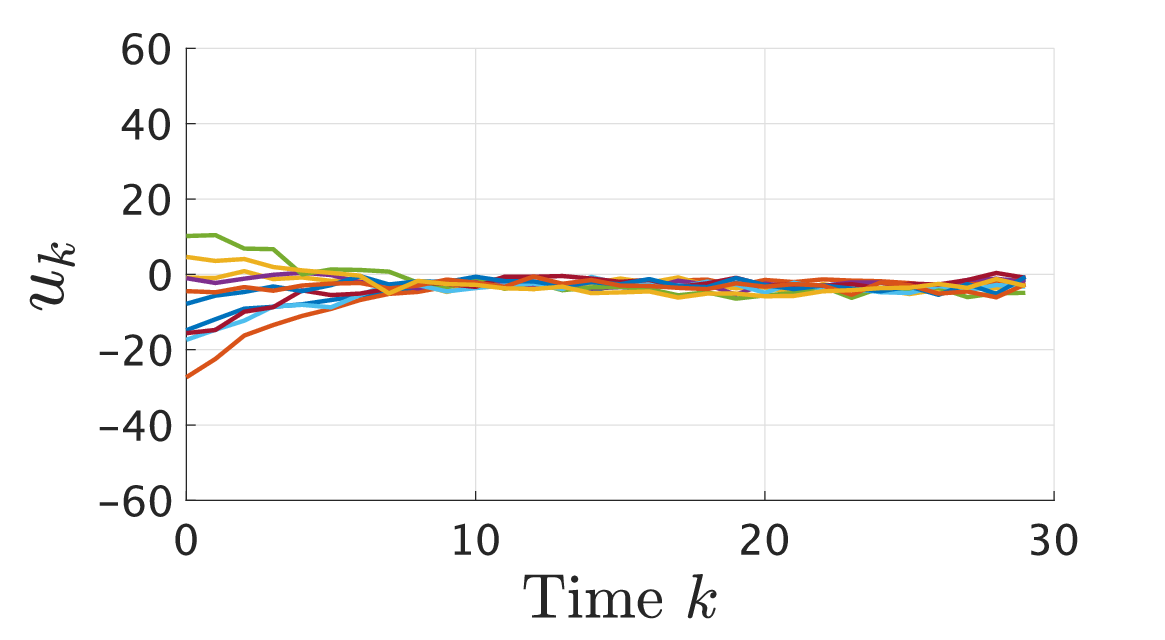}
		\subcaption{$ Q_k = I $}\label{fig:opt_control_Q0}
	\end{minipage}
	\begin{minipage}[b]{0.48\linewidth}
		\centering
		\includegraphics[keepaspectratio, scale=0.24]
		{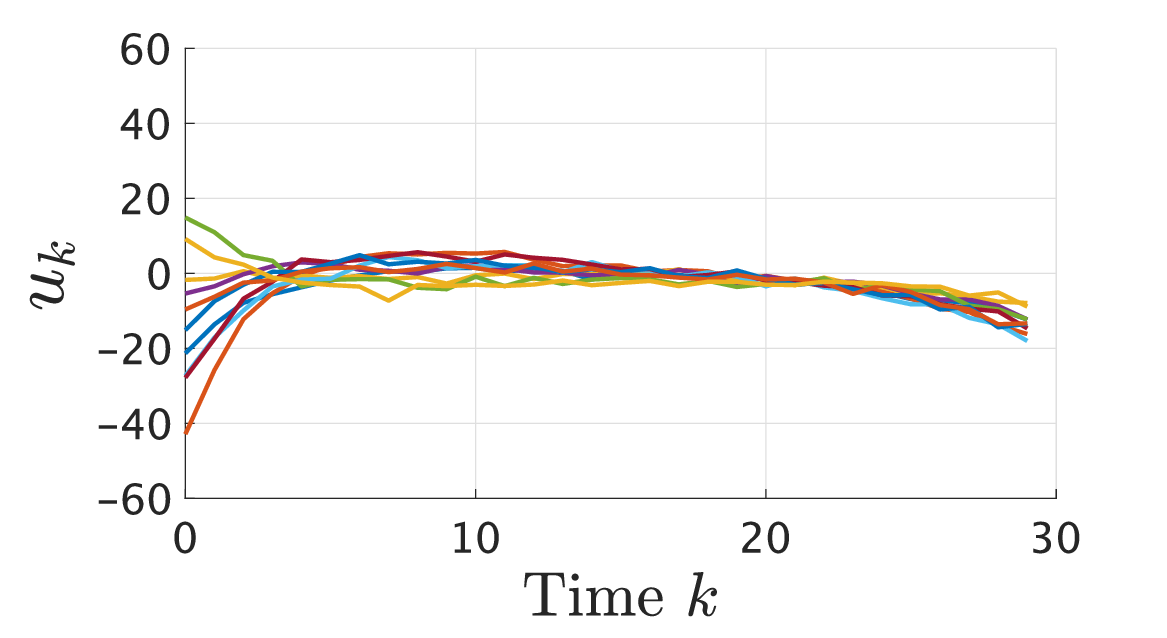}
		\subcaption{$ Q_k = 10I $}\label{fig:opt_control_Q50}
	\end{minipage}
	\caption{\subref{fig:opt_state_Q0}--\subref{fig:opt_state_x2_Q50} $ 1000 $ samples of the optimal state process $ x_k^* = [x_{k,1} \ x_{k,2} ]^\top $ and \subref{fig:opt_control_Q0},~\subref{fig:opt_control_Q50} $ 10 $ samples of the optimal control process for \eqref{eq:ex_parameter} (colored lines) with \modi{$ Q_k \equiv I, 10I $}. The black ellipses are given by \eqref{eq:ellipse}. The black vertical lines in \subref{fig:opt_state_x2_Q0},~\subref{fig:opt_state_x2_Q50} are the $ 3\sigma $ intervals for the initial and target densities.}\label{fig:opt_trajectory}
\end{figure}

Next, we compare the control policies with and without entropy regularization for \eqref{eq:ex_parameter} and $ Q_k \equiv I $, \modi{$ \bar{\mu}_0 = \bar{\mu}_\ft = 0 $}. Fig.~\ref{fig:opt_trajectory_eps} depicts the samples of the optimal state process and the optimal control process for $ \varepsilon = 3,0 $ obtained by Theorems~\ref{thm:opt_policy},~\ref{thm:zero_noise}.
For the regularized case ($ \varepsilon = 3 $), the control process has a large variance resulting in high entropy of the policy.
The same is true for the state process, and each sample path fluctuates.
For the zero-noise case ($ \varepsilon = 0 $), since the optimal policy is deterministic, the samples of the state process show smooth trajectories.

\begin{figure}[!t]
	\begin{minipage}[b]{1.0\linewidth}
		\centering
		\includegraphics[keepaspectratio, scale=0.3]
		{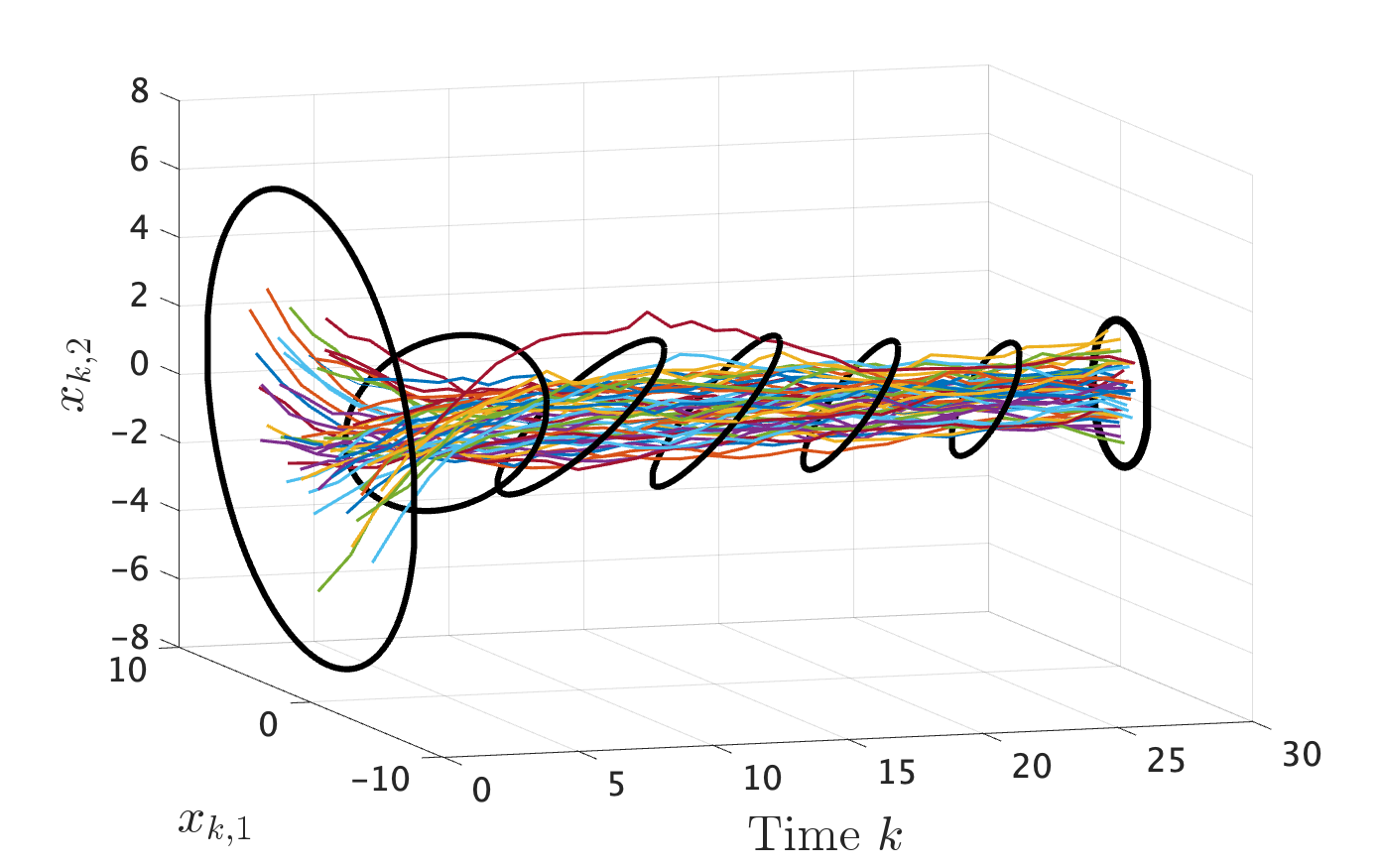}
		\subcaption{$ \varepsilon = 3 $}\label{fig:opt_state_eps3}
	\end{minipage}
	\begin{minipage}[b]{1.0\linewidth}
		\centering
		\includegraphics[keepaspectratio, scale=0.3]
		{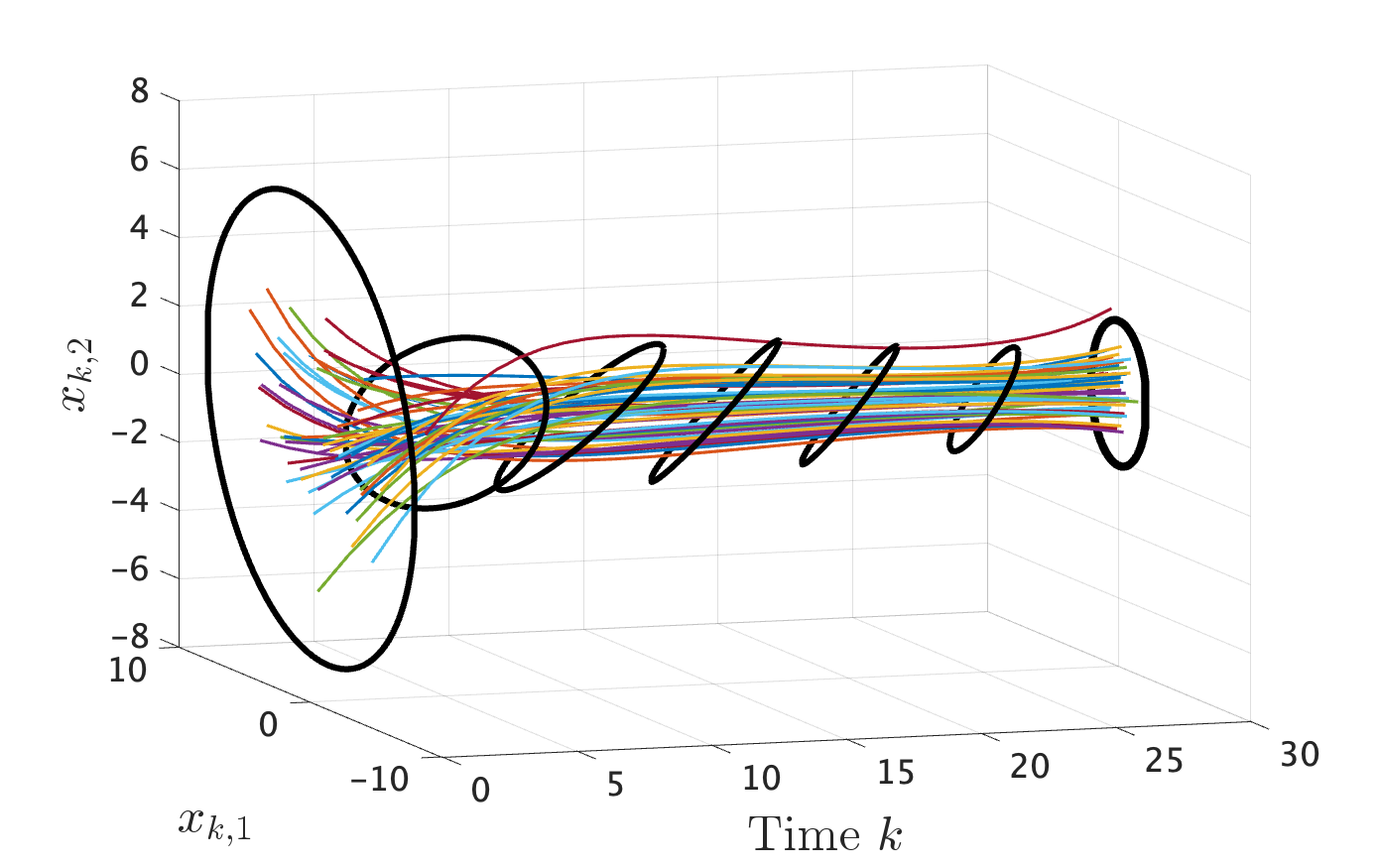}
		\subcaption{$ \varepsilon = 0 $}\label{fig:opt_state_eps0}
	\end{minipage}

	\vspace{0.5cm}
	\begin{minipage}[b]{0.49\linewidth}
		\centering
		\includegraphics[keepaspectratio, scale=0.24]
		{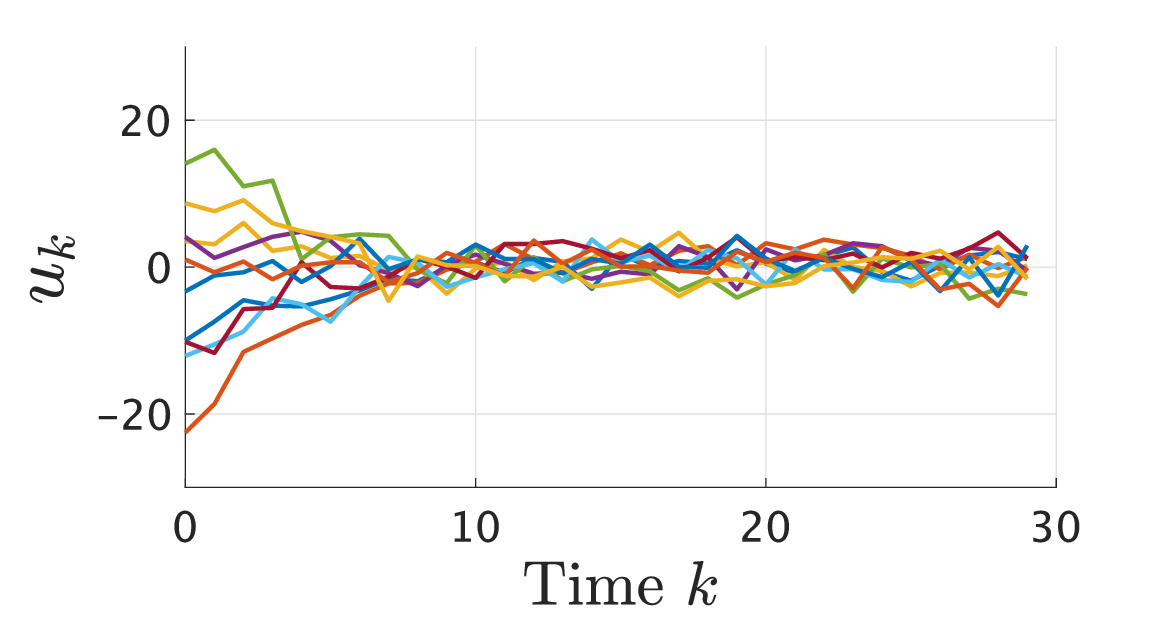}
		\subcaption{$ \varepsilon = 3 $}\label{fig:opt_input_eps3}
	\end{minipage}
	\begin{minipage}[b]{0.48\linewidth}
		\centering
		\includegraphics[keepaspectratio, scale=0.24]
		{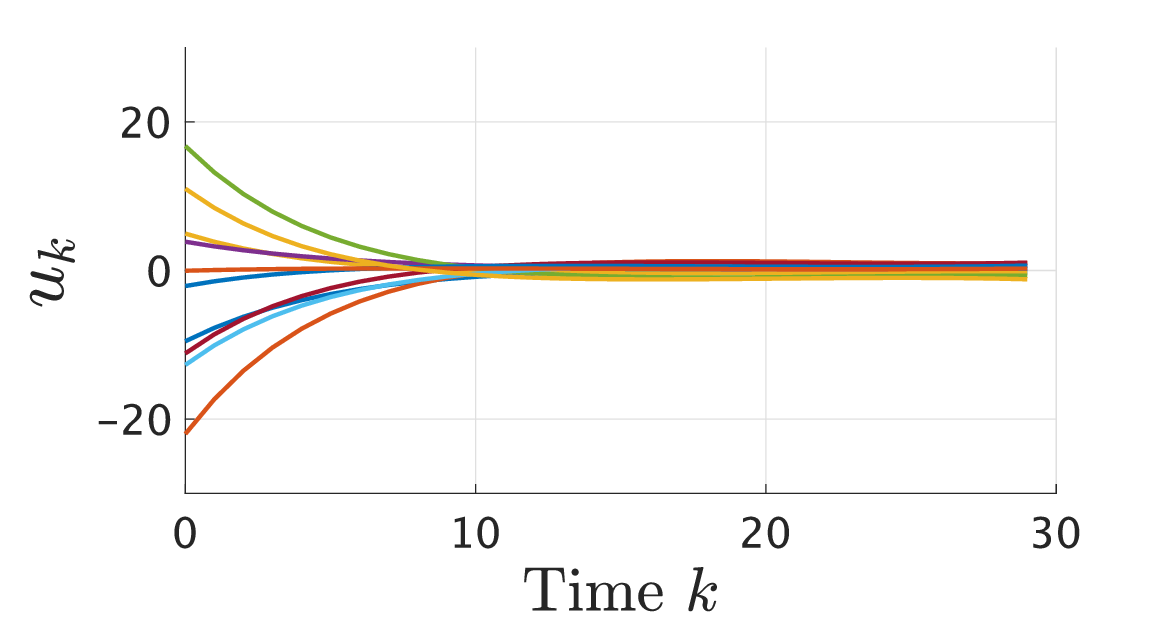}
		\subcaption{$ \varepsilon = 0 $}\label{fig:opt_input_eps0}
	\end{minipage}
	\caption{\subref{fig:opt_state_eps3},~\subref{fig:opt_state_eps0} $ 50 $ samples of the optimal state process $ x_k^* = [x_{k,1} \ x_{k,2} ]^\top $ and \subref{fig:opt_input_eps3},~\subref{fig:opt_input_eps0} $ 10 $ samples of the optimal control process for \eqref{eq:ex_parameter} (colored lines) with $ \varepsilon = 3, 0 $. The black ellipses are given by \eqref{eq:ellipse}.}\label{fig:opt_trajectory_eps}
\end{figure}

\section{Conclusion}\label{sec:conclusion}
In this paper, we studied the MaxEnt density control of \cyan{discrete-time linear deterministic systems} with quadratic cost and Gaussian end-point distributions. Specifically, we showed the existence, uniqueness, and explicit form of the optimal solution by analyzing the coupled Riccati equations. 
Additionally, it was shown that the MaxEnt density control problem has an equivalent backward control problem.
Moreover, by the zero-noise limit where the entropy regularization vanishes, we also obtained the closed-form expression of the optimal state feedback policy of the unregularized density control problem.

An important direction of future work is to study the MaxEnt density control of stochastic systems.
Similarly as mentioned in \cite{Ito2022maximum} for quadratic control cost, our approach based on the coupled Riccati equations will fail to handle stochastic systems. In this case, optimization-based approaches such as in \cite{Bakolas2016,Liu2022} would be suitable. 

\rev{Another direction of future work is to remove or relax the invertibility assumption on $ \bara_k $. The MaxEnt density control without this invertibility can no longer be reduced to solving the coupled Riccati equations~\eqref{eq:riccati},~\eqref{eq:H_riccati}. Hence, it may require a fundamentally different approach.}

\appendices

\section{Proof of Proposition~\ref{prop:riccati}}\label{app:riccati}    
Here, for brevity, we sometimes drop the time index $ k $ when no confusion arises.
   By \eqref{eq:sigma_evolution} \modi{and the invertibility of $ \bara_k $}, it holds that
   \begin{align}
      \Sigma_k^{-1} &= \calA_k(\Pi_{k+1})^\top  (\Sigma_{k+1} - B_k (I + B_k^\top \Pi_{k+1} B_k)^{-1} B_k^\top)^{-1} \nonumber\\
      &\quad \times \calA_k(\Pi_{k+1}) .\label{eq:sig1}
   \end{align}
   Here, we have
   \begin{align*}
      &(\Sigma_{k+1} - B_k (I + B_k^\top \Pi_{k+1} B_k)^{-1} B_k^\top)^{-1} \\
      &= \Sigma_{k+1}^{-1} - \Sigma_{k+1}^{-1} B_k (-I - B_k^\top \Pi_{k+1} B_k + B_k^\top \Sigma_{k+1}^{-1} B_k)^{-1} \\
      &\quad\times B_k^\top \Sigma_{k+1}^{-1} \\
      &=  H_{k+1} + \Pi_{k+1} -  (H_{k+1} + \Pi_{k+1}) B_k \\
      &\quad\times( - I + B_k^\top H_{k+1} B_k )^{-1} B_k^\top (H_{k+1} + \Pi_{k+1}) .
   \end{align*}
   Substituting this into \eqref{eq:sig1} and expanding it yield
   \begin{align*}
      & \Sigma_k^{-1} = \modi{\bara_k^\top}( F_1 + F_2 + F_2^\top + F_3 + F_4 ) \bara_k,
   \end{align*}
   where we defined
   \begin{align*}
      F_1 &:= H + \Pi -  (H + \Pi) B ( - I + B^\top H B )^{-1} B^\top (H + \Pi), \\
      F_2 &:= - \bigl[H + \Pi -  (H + \Pi) B ( - I + B^\top H B )^{-1} \\
      &\quad  \times B^\top (H + \Pi)\bigr] B(I + B^\top \Pi B)^{-1} B^\top \Pi, \\
      F_3 &:= \Pi B (I + B^\top \Pi B)^{-1} B^\top (H + \Pi) B \\
      &\quad \times(I + B^\top \Pi B)^{-1} B^\top \Pi, \\
      F_4 &:= - \Pi B(I + B^\top \Pi B)^{-1} B^\top ( H + \Pi) B(-I + B^\top H B)^{-1} \\
      &\quad \times B^\top (H + \Pi) B(I + B^\top \Pi B)^{-1} B^\top \Pi .
   \end{align*}
   In addition, the above can be rearranged as follows:
   \begin{align*}
      F_2
      &=-(H + \Pi) B(I + B^\top \Pi B)^{-1} B^\top \Pi + (H+\Pi) B \\
      &\quad \times \bigl[ (I+B^\top\Pi B)^{-1} + (-I +B^\top HB)^{-1}  \bigr]  B^\top \Pi \\
      &= (H+\Pi) B (-I + B^\top HB)^{-1} B^\top \Pi , \\
      F_3
      &= \Pi B(I + B^\top \Pi B)^{-1} (B^\top HB - I) \\
      &\quad \times \left[ (I+B^\top\Pi B)^{-1} + (-I +B^\top HB)^{-1}  \right] B^\top \Pi, \\
      F_4 
      &= -\Pi B \left[ (I+B^\top \Pi B)^{-1} + (-I +B^\top HB)^{-1}  \right] \\
      &\hspace{-0.5cm}\times (B^\top HB - I) [ (I+B^\top \Pi B)^{-1} + (-I +B^\top HB)^{-1}  ] B^\top \Pi .
   \end{align*}
   In summary, we get
   \begin{align*}
      \Sigma_k^{-1} &= \bara_k^\top \bigl[ H_{k+1} + \Pi_{k+1} \\
      &\quad- H_{k+1}B_k(-I + B_k^\top H_{k+1} B_k)^{-1} B_k^\top H_{k+1} \\
      &\quad  - \Pi_{k+1} B_k(I + B_k^\top \Pi_{k+1} B_k)^{-1} B_k^\top \Pi_{k+1} \bigr] \bara_k .
   \end{align*}
   Finally, by \eqref{eq:riccati}, we arrive at the Riccati equation~\eqref{eq:H_riccati}.

\section{Proof of Proposition~\ref{prop:linear}}\label{app:linear}
By \eqref{eq:XY} and noting that
\begin{equation}
    M_k = 
    \begin{bmatrix}
		I & G_k \\
		0 & \bara_k^\top
	\end{bmatrix}^{-1}
	\begin{bmatrix}
		\bara_k & 0 \\
		-\barq_k & I
	\end{bmatrix} ,
\end{equation}
we obtain for $ k\in \bbra{0,\ft-1} $,
\begin{align}
	&X_{k+1}  + G_k Y_{k+1} = \bara_k X_k, \label{eq:XY_1} \\
	&\bara_k^\top Y_{k+1} = -\barq_k X_k + Y_k . \label{eq:XY_2}
\end{align}
Assume that for some $ \bar{k} \in \bbra{1,\ft} $, $ X_{\bar{k}} $ is invertible and $ \Pi_{\bar{k}} = Y_{\bar{k}} X_{\bar{k}}^{-1} $ holds. By \eqref{eq:XY_1} and \cyan{the Sherman--Morrison--Woodbury formula~\cite{Horn2012}}, $ X_{\bar{k}-1}^{-1} $ is formally
\begin{align}
	&X_{\bar{k}-1}^{-1} = X_{\bar{k}}^{-1} ( I + G_{\bar{k}-1} Y_{\bar{k}} X_{\bar{k}}^{-1})^{-1} \bara_{\bar{k}-1} \nonumber\\
	&= X_{\bar{k}}^{-1} \left( I - B_{\bar{k}-1} (I + B_{\bar{k}-1}^\top \Pi_{\bar{k}} B_{\bar{k}-1})^{-1} B_{\bar{k}-1}^\top \Pi_{\bar{k}}  \right) \bara_{\bar{k}-1} . \nonumber
\end{align}
Since $ \{\Pi_{k}\} $ satisfies \eqref{eq:riccati}, $ I + B_{\bar{k}-1}^\top \Pi_{\bar{k}} B_{\bar{k}-1} $ is invertible, and thus, $ X_{\bar{k}-1} $ is invertible.
In addition, \eqref{eq:XY_2} yields
\begin{align}
	&Y_{\bar{k}-1} X_{\bar{k}-1}^{-1} = \bara_{\bar{k}-1}^\top Y_{\bar{k}} X_{\bar{k}-1}^{-1} + \barq_{\bar{k}-1} \nonumber\\
	&= \bara_{\bar{k}-1}^\top \Pi_{\bar{k}} \left( I - B_{\bar{k}-1} (I + B_{\bar{k}-1}^\top \Pi_{\bar{k}} B_{\bar{k}-1})^{-1} B_{\bar{k}-1}^\top \Pi_{\bar{k}}  \right) \bara_{\bar{k}-1}  \nonumber\\
    &\quad + \barq_{\bar{k}-1} \nonumber\\
    &= \Pi_{\bar{k}-1} . \label{eq:linear_2}
\end{align}
Since for $ \bar{k} = \ft $, $ X_{\bar{k}} $ is invertible and $ \Pi_{\bar{k}} = Y_{\bar{k}} X_{\bar{k}}^{-1} $, by induction, it holds that $ \Pi_k = Y_k X_k^{-1} $ for any $ k \in \bbra{0,\ft} $.
The same argument as above shows $ H_k = - \what{X}_k^{-\top} \what{Y}_k^\top $.
\cyan{The statement ii) follows immediately from the invertibility assumption on $ X_k, \what{X}_k $ and \fin{a similar argument as in} \eqref{eq:linear_2}.}

   \section{Lemmas for the Proofs of Propositions~\ref{thm:riccati_solution},~\ref{prop:positive}}
   Here, we introduce some lemmas for proving Propositions~\ref{thm:riccati_solution},~\ref{prop:positive}.
   
   \begin{lemma}\label{lemma:phi11_inv}
       Suppose that Assumption~\ref{ass:invertibility} holds.
       Then, for any $ k,l\in \bbra{0,\ft} $, $ \Phi_{11}(k,l) $ is invertible.
       In addition, for $ T_{k,l} := \Phi_{11}(k,l)^{-1} \Phi_{12} (k,l) $, it holds that
       \begin{align}\label{eq:T}
           T_{k,l} 
           \begin{cases}
           \succeq 0, & k < l , \\
           = 0, & k = l, \\
           \preceq 0, & k > l.
           \end{cases}
       \end{align}
       \hfill $ \diamondsuit $
   \end{lemma}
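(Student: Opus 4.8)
The plan is to analyze the structure of $M_k$ and propagate properties through the state-transition matrix $\Phi_M(k,l)$. First I would record the factorization already available in Appendix~\ref{app:linear}: $M_k = \begin{bmatrix} I & G_k \\ 0 & \bara_k^\top \end{bmatrix}^{-1} \begin{bmatrix} \bara_k & 0 \\ -\barq_k & I \end{bmatrix}$, so that $M_k = \begin{bmatrix} I & -\bara_k^{-\top} G_k^{-1}\cdots \end{bmatrix}$—more cleanly, I would instead work directly with the action of $M_k$ on pairs $(X,Y)$ as in \eqref{eq:XY_1}, \eqref{eq:XY_2}: writing $\begin{bmatrix} X_{k+1}\\ Y_{k+1}\end{bmatrix} = M_k \begin{bmatrix} X_k \\ Y_k \end{bmatrix}$ gives $X_{k+1} + G_k Y_{k+1} = \bara_k X_k$ and $\bara_k^\top Y_{k+1} = -\barq_k X_k + Y_k$. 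The invertibility of $\Phi_{11}(k,l)$ is the statement that if we initialize with $(X_l, Y_l) = (I, 0)$ at time $l$ and run the recursion (forward if $k>l$, backward if $k<l$), then $X_k$ stays invertible; indeed $\Phi_{11}(k,l)$ is exactly $X_k$ and $\Phi_{21}(k,l)$ is $Y_k$ for this initialization.

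The key observation is that with $(X_l,Y_l)=(I,0)$, the matrix $Y_k X_k^{-1}$ (when defined) solves the Riccati recursion \eqref{eq:riccati} with terminal/initial value $0$ at time $l$, by exactly the computation in \eqref{eq:linear_2} of Appendix~\ref{app:linear}. Since the zero matrix satisfies $I + B^\top \cdot 0 \cdot B = I \succ 0$, Proposition~\ref{prop:linear}(i)-style reasoning (really the inductive step inside Appendix~\ref{app:linear}) shows $X_k$ stays invertible as long as the Riccati iterates $\Pi_k$ (with $\Pi_l = 0$) satisfy the positive-definiteness condition $I + B_k^\top \Pi_{k+1} B_k \succ 0$. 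So the real content is: the solution of \eqref{eq:riccati} started from $0$ (propagated in the appropriate time direction) never makes $I + B^\top \Pi B$ singular. For the backward direction $k<l$ this is automatic because $\Pi_k \succeq \barq_k \succeq 0$ along \eqref{eq:riccati} with nonnegative terminal condition (the Riccati map preserves positive semidefiniteness when $\barq_k\succeq 0$), hence $I + B_k^\top \Pi_{k+1}B_k \succ 0$ trivially. For the forward direction $k>l$ one needs the analogous statement for the $H$-type Riccati \eqref{eq:H_riccati} or a direct sign argument; I would handle it via the identity $T_{k,l} = \Phi_{11}^{-1}\Phi_{12}$ together with the fact that $T_{k,l}$ is, up to sign, a reachability-type Gramian built from $\bara$ and $G=BB^\top$ — specifically, propagating $(I,0)$ and tracking $X_k^{-1}$ one finds $T_{k,l} = X_k^{-1}\big(\sum \Phi_\bara \cdots G \cdots \Phi_\bara^\top\big) X_k^{-\top}$-like expressions, giving the definite sign in \eqref{eq:T}.

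Concretely the steps in order: (1) set up the recursion for $(X_k,Y_k) = (\Phi_{11}(k,l),\Phi_{21}(k,l))$ from $(I,0)$; (2) show by induction in the decreasing direction that for $k \le l$ the iterate $\Pi_k := Y_k X_k^{-1}$ is well-defined (i.e.\ $X_k$ invertible) and satisfies $\Pi_k \succeq 0$, using that \eqref{eq:riccati} with $\barq_k \succeq 0$ maps $\{\Pi \succeq 0\} \to \{\Pi \succeq 0\}$ and that $\Pi \succeq 0 \Rightarrow I + B^\top\Pi B \succ 0 \Rightarrow X_{k-1}$ invertible by the \eqref{eq:XY_1}/Woodbury computation; simultaneously derive $T_{k,l} = \Phi_{11}(k,l)^{-1}\Phi_{12}(k,l)$ and use the explicit recursion $T_{k-1,l} = \bara_{k-1}^{-1}(T_{k,l} + G_{k-1}) \cdots$—I'd write $X_{k+1} = \bara_k X_k - G_k Y_{k+1}$ and unwind to get $T$ as an explicit sum of conjugated $G_j \succeq 0$ terms, yielding $T_{k,l} \succeq 0$ for $k<l$; (3) for $k > l$ run the mirror-image argument on $M_k^{-1}$ (whose $(1,1)$-block analysis is symmetric via \eqref{eq:M_inv}), or equivalently note $\Phi_{11}(k,l)\Phi_{11}(l,k) $-type relations plus $T_{k,l} = -\Phi_{11}(k,l) T_{l,k}\Phi_{11}(k,l)^\top$-style symmetry to flip the sign, giving $T_{k,l}\preceq 0$; (4) the $k=l$ case is immediate since $\Phi_M(l,l)=I$, so $\Phi_{12}(l,l)=0$. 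The main obstacle I anticipate is getting the sign bookkeeping right in the forward direction $k>l$: unlike $k<l$, here $\Pi_k$ built from $(I,0)$ need not be positive semidefinite, so I cannot simply invoke the PSD-invariance of the Riccati map, and I would instead need to exploit the symplectic structure of $M_k$ (i.e.\ $M_k^\top J M_k = J$ with $J = \begin{bmatrix} 0 & I \\ -I & 0\end{bmatrix}$) to transfer the $k<l$ result to $k>l$, or carry out the dual analysis using \eqref{eq:M_inv} directly and the $H$-Riccati sign conventions.
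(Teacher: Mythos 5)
Your argument for the half $k<l$ is essentially sound and close in spirit to what the paper does: initializing \eqref{eq:XY} at $(X_l,Y_l)=(I,0)$, the ratio $K_{k,l}=\Phi_{21}(k,l)\Phi_{11}(k,l)^{-1}$ is the LQ Riccati iterate with zero terminal value, which stays positive semidefinite because $\barq_k\succeq 0$, so $I+B_k^\top K_{k+1,l}B_k\succ 0$ and $\Phi_{11}(k,l)$ stays invertible; the sign of $T_{k,l}$ then follows from a positivity-preserving recursion. One caveat even here: for $\barq_k\neq 0$, $T_{k,l}$ is \emph{not} "an explicit sum of conjugated $G_j\succeq 0$ terms" (that holds only for $\barq_k\equiv 0$, where $T_{0,l}=\calR(l,0)$); in general it solves a Riccati recursion, and you need the closed-loop rewriting $T_{k,l+1}=G_l+(\bara_l-W_lL_l^\top)T_{k,l}(\bara_l-W_lL_l^\top)^\top+W_lW_l^\top$ (the identity of \cite{Chan1984}, with $\bara_l-W_lL_l^\top=\bara_l(I+T_{k,l}\barq_l)^{-1}$ invertible) to see that each step is an invertible congruence plus positive semidefinite terms.

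The genuine gap is the direction $k>l$, which you yourself flag as the anticipated obstacle, and neither of your fallback routes closes it as written. The symmetry $T_{k,l}=-\Phi_{11}(k,l)T_{l,k}\Phi_{11}(k,l)^\top$ does not hold: from $\Phi_M(l,k)=\Phi_M(k,l)^{-1}$ and symplectic inversion one gets $\Phi_{11}(l,k)=\Phi_{22}(k,l)^\top$ and $\Phi_{12}(l,k)=-\Phi_{12}(k,l)^\top$, hence $T_{l,k}=-\Phi_{12}(k,l)\Phi_{22}(k,l)^{-1}$, which is not congruent to $\Phi_{11}(k,l)^{-1}\Phi_{12}(k,l)$ via $\Phi_{11}$. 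Moreover, your invertibility proof for $\Phi_{11}(k,l)$ with $k>l$ would itself require the sign of $K_{k,l}$ propagated \emph{forward} from zero, i.e., exactly the hard case again. The paper sidesteps both issues: it cites the invertibility of $\Phi_{11}(k,l)$ from \cite[Lemma~4]{Liu2022}, and for the sign it recurses in the \emph{second} index, using $\Phi_M(k,l+1)=\Phi_M(k,l)M_l^{-1}$ to obtain the self-contained recursion $T_{k,l+1}=G_l+\bara_l(I+T_{k,l}\barq_l)^{-1}T_{k,l}\bara_l^\top$; starting from $T_{k,k}=0$, increasing $l$ adds positive semidefinite terms (so $T_{k,l}\succeq 0$ for $l>k$), while decreasing $l$ subtracts them under an invertible congruence (so $T_{k,l}\preceq 0$ for $l<k$), handling both directions uniformly. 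Your first-index recursion cannot be made self-contained—$T_{k-1,l}$ in terms of $T_{k,l}$ drags in $\Phi_{21}$ and $\Phi_{22}$—which is precisely why the paper treats first-index monotonicity separately in Lemma~\ref{lem:T_monotone}. To repair your proof, adopt the second-index recursion (or carry out the dual analysis on the backward system explicitly rather than gesturing at it).
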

   \begin{proof}
       It is known that $ \Phi_{11} (k,l) $ is invertible for $ k,l \in \bbra{0,\ft} $ by \cite[Lemma~4]{Liu2022}.
       By using $ \Phi_M (k,l+1) = \Phi_M (k,l) M_l^{-1} $, we obtain
       \begin{align}
           &T_{k,l+1} = (\Phi_{11}(k,l) \bara_l^{-1} + \Phi_{12}(k,l)\barq_l \bara_l^{-1})^{-1} \nonumber\\
           &\quad\times (\Phi_{11}(k,l) \bara_l^{-1}G_l + \Phi_{12}(k,l) \barq_l\bara_l^{-1} G_l + \Phi_{12}(k,l) \bara_l^\top) \nonumber\\
           &= G_l + \bara_l (I + T_{k,l} \barq_l)^{-1} T_{k,l} \bara_l^\top . \label{eq:riccati_T2}
       \end{align}
    Since $ \barq_l \succeq 0 $, there exists a matrix $ L_l $ such that $ \barq_l = L_lL_l^\top $, and we get the following Riccati equation:
       \begin{align}
           &T_{k,l+1} = G_l + \bara_l\left( I - T_{k,l} L_l (I + L_l^\top T_{k,l} L_l)^{-1} L_l^\top  \right) T_{k,l} \bara_l^\top .\label{eq:riccati_T}
       \end{align}
       In addition, by \cite[Section~IV]{Chan1984}, \eqref{eq:riccati_T} can be written as
       \begin{align}
           T_{k,l+1} =G_{l} + (\bara_{l} - W_{l} L_{l}^\top) T_{k,l}(\bara_{l} - W_{l} L_{l}^\top)^\top + W_{l} W_{l}^\top  , \label{eq:riccati_K}
       \end{align}
       where $ W_{l} := \bara_{l} T_{k,l} L_{l}\left(I + L_{l}^\top T_{k,l}L_{l} \right)^{-1} $. Here,
       \begin{align*}
           \bara_{l} - W_{l} L_{l}^\top = \bara_{l} (I + T_{k,l} \barq_{l})^{-1}
       \end{align*}
       is invertible.
       Therefore, by \eqref{eq:riccati_K} and $ T_{k,k} = 0 $, we obtain $ T_{k,l} \succeq 0 $ for $ k < l $ and $ T_{k,l} \preceq 0 $ for $ k > l $.
   \end{proof}

   \begin{lemma}[{\cite[Corollary~2]{Liu2022}}]\label{lem:T_invertible}
       Suppose that Assumptions~\ref{ass:invertibility},~\ref{ass:reachability} hold.
       Then, $ \Phi_{12} (k,0) $, $\Phi_{12} (0,k) $ are invertible for any $ k\in \bbra{k_\rmr,\ft} $, and $ \Phi_{12} (k,\ft) $, $\Phi_{12} (\ft,k) $ are invertible for any $ k\in \gram{\bbra{0,k_\rmr}} $. 
       \hfill $ \diamondsuit $
   \end{lemma}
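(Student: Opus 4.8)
This statement is quoted from \cite[Corollary~2]{Liu2022}, so the plan is to indicate how it follows directly from the Hamiltonian structure of $ M_k $. Two reductions come first. Each block in the claim is an $ n\times n $ real matrix, so invertibility is equivalent to injectivity; and $ M_k $ is symplectic (a one-line check of the block identities using \eqref{eq:M_inv} and $ \barq_k, G_k \in \calS^n $), so $ \Phi_M(0,k) = \Phi_M(k,0)^{-1} $ has $ (1,2) $-block $ -\Phi_{12}(k,0)^\top $ and, likewise, $ \Phi_{12}(k,\ft) = -\Phi_{12}(\ft,k)^\top $. Hence it suffices to show $ \Phi_{12}(k,0) $ is nonsingular for $ k\in \bbra{k_\rmr,\ft} $ and $ \Phi_{12}(\ft,k) $ is nonsingular for $ k\in \bbra{0,k_\rmr} $.

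The engine is a discrete cross-term identity along trajectories of $ M_k $. Writing $ z_j := [x_j^\top \ \lambda_j^\top]^\top $, the recursion $ z_{j+1} = M_j z_j $ is equivalent to $ x_{j+1} = \bara_j x_j - G_j \lambda_{j+1} $ and $ \bara_j^\top \lambda_{j+1} = \lambda_j - \barq_j x_j $. Setting $ v_j := -B_j^\top \lambda_{j+1} $, so that $ x_{j+1} = \bara_j x_j + B_j v_j $, a short computation gives
\[
  x_{j+1}^\top \lambda_{j+1} - x_j^\top \lambda_j = -\bigl( x_j^\top \barq_j x_j + \|v_j\|^2 \bigr) .
\]
Since $ \barq_j \succeq 0 $, summing this over any interval at whose endpoints $ x $ vanishes forces $ v_j \equiv 0 $ and $ \barq_j x_j \equiv 0 $ there; propagating $ x_{j+1} = \bara_j x_j $ from the zero endpoint then yields $ x_j \equiv 0 $ throughout.

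Third, combine this with the reachability assumption. Suppose $ \Phi_{12}(k,0) w = 0 $ for some $ k\in \bbra{k_\rmr,\ft} $, and run the trajectory from $ x_0 = 0 $, $ \lambda_0 = w $; then $ x_k = \Phi_{12}(k,0) w = 0 $, so $ x_j \equiv 0 $ and $ B_j^\top \lambda_{j+1} = 0 $ on $ \bbra{0,k} $. The costate recursion collapses to $ \lambda_j = \bara_j^\top \lambda_{j+1} $, giving $ \lambda_{j+1} = \Phi_\bara(j+1,0)^{-\top} w $, so $ B_j^\top \lambda_{j+1} = 0 $ for $ j\in \bbra{0,k-1} $ becomes $ w^\top \bigl( \sum_{j=0}^{k-1} \Phi_\bara(j+1,0)^{-1} G_j \Phi_\bara(j+1,0)^{-\top} \bigr) w = 0 $. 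Because $ \Phi_\bara(k,j+1) = \Phi_\bara(k,0) \Phi_\bara(j+1,0)^{-1} $, the bracketed matrix equals $ \Phi_\bara(k,0)^{-1} \calR(k,0) \Phi_\bara(k,0)^{-\top} $, which under Assumption~\ref{ass:invertibility} is nonsingular exactly when $ \calR(k,0) $ in \eqref{eq:reachability} is; hence $ w = 0 $. Running the same argument on $ \bbra{k,\ft} $ with $ \calR(\ft,k) $ in place of $ \calR(k,0) $ handles $ \Phi_{12}(\ft,k) $ for $ k\in \bbra{0,k_\rmr} $, and the two transpose relations from the first step dispose of the remaining blocks.

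The part needing care is combinatorial rather than conceptual: matching each of the four $ \Phi_{12} $-blocks to the interval over which the appropriate reachability Gramian is assumed invertible in Assumption~\ref{ass:reachability}, and re-expressing the ``backward'' Gramians that arise naturally (indexed from the right endpoint) in terms of $ \calR $ as defined in \eqref{eq:reachability}. That re-expression is exactly where invertibility of $ \bara_k $ is used: it makes every transition matrix $ \Phi_\bara(\cdot,\cdot) $ invertible, so conjugating by it preserves nonsingularity.
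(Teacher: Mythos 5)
Your proof is correct, but it takes a genuinely different route from the paper. The paper imports this statement wholesale from \cite[Corollary~2]{Liu2022} and, in Remark~\ref{rmk:gramian_riccati}, sketches the underlying mechanism as a Riccati argument: the quantity $T_{k,l}=\Phi_{11}(k,l)^{-1}\Phi_{12}(k,l)$ satisfies the Riccati recursion \eqref{eq:riccati_T}, reduces to the reachability Gramian when $\barq_k\equiv 0$, and is shown invertible under invertibility of $\calR$ by a Jazwinski-type comparison; invertibility of $\Phi_{12}$ then follows since $\Phi_{11}$ is invertible (Lemma~\ref{lemma:phi11_inv}). You instead argue directly on trajectories of the symplectic system $z_{j+1}=M_j z_j$: the dissipation identity $x_{j+1}^\top\lambda_{j+1}-x_j^\top\lambda_j=-(x_j^\top\barq_j x_j+\|v_j\|^2)$ with $\barq_j\succeq 0$ forces any trajectory with $x$ vanishing at both endpoints to have $v_j\equiv 0$ and $x_j\equiv 0$, after which the freely evolving costate places $\ker\Phi_{12}$ inside the kernel of $\Phi_\bara(k,0)^{-1}\calR(k,0)\Phi_\bara(k,0)^{-\top}$; the symplectic identity $\Phi_{12}(0,k)=-\Phi_{12}(k,0)^\top$ (which the paper also uses, via \cite[Lemma~4]{Liu2022}) disposes of the remaining two blocks. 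I checked the block identities, the telescoping sum, and the Gramian conjugation; all are sound, and the use of $\barq_k\succeq 0$ and of Assumption~\ref{ass:invertibility} is exactly where you say it is. What your approach buys is a self-contained, elementary proof that avoids Riccati comparison machinery entirely; what the paper's route buys is the sign information $T_{k,l}\succeq 0$ (resp.\ $\preceq 0$) from Lemma~\ref{lemma:phi11_inv}, which is needed elsewhere (e.g.\ in Lemma~\ref{lem:T_monotone} and Appendix~\ref{app:positive}) but not for this lemma.
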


   \rev{
   \begin{rmk}\label{rmk:gramian_riccati}
   The invertibility of the reachability Gramian is essential for this lemma. To see this, it is more convenient to prove Lemma~\ref{lem:T_invertible} based on the Riccati equation~\eqref{eq:riccati_T} rather than the approach in \cite{Liu2022}. When $ Q_k \equiv 0 $, $ \cro_k \equiv 0 $, i.e., $ L_k \equiv 0 $, the solution $ T_{0,l} = \Phi_{11} (0,l)^{-1} \Phi_{12} (0,l) $ to \eqref{eq:riccati_T} coincides with $ \calR(l,0) $. Hence, the invertibility of $ \calR(k,0) = \Phi_{11} (0,k)^{-1} \Phi_{12} (0,k) $ implies that $ \Phi_{12} (0,k) $ is invertible. Even when $ Q_k \neq 0 $, $ \cro_k \neq 0 $, by the same argument as for \cite[Lemma~7.3]{Jazwinski1970}, we can show that the solution $ T_{0,l} $ to the Riccati equation \eqref{eq:riccati_T} with the initial condition $ T_{0,0} = 0 $ is invertible under the invertibility of $ \calR(l,0) $. Similarly, the invertibility of $ \Phi_{12} (k,0) $, $ \Phi_{12} (k,\ft) $, $ \Phi_{12} (\ft,k) $ can be shown via the Riccati equation and the invertible reachability Gramian.
   \hfill $ \diamondsuit $
   \end{rmk}
   }
   
   \begin{lemma}\label{lem:T_monotone}
       Suppose that Assumption~\ref{ass:invertibility} holds.
       Then, for any $ k \in \bbra{1,\ft}, \ l\in \bbra{0,\ft} $, it holds that
       \begin{equation}\label{eq:T_monotone}
           T_{k,l}  \preceq T_{k-1,l}  .
       \end{equation}
       Especially when $ T_{k,l} $ and $T_{k-1,l} $ are invertible, it holds that
       \begin{equation}\label{eq:Tinv_monotone}
           T_{k,l}^{-1} \succeq T_{k-1,l}^{-1} .
       \end{equation}
       \hfill $ \diamondsuit $
   \end{lemma}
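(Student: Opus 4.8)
The plan is to view $T_{k,l}$ and $T_{k-1,l}$ as two solutions of one and the same Riccati recursion in the index $l$. Writing $\calF_l(T):=G_l+\bara_l(I+T\barq_l)^{-1}T\bara_l^\top$, equation \eqref{eq:riccati_T2} reads $T_{k,l+1}=\calF_l(T_{k,l})$, and by \eqref{eq:T} both sequences start from $0$, at $l=k$ and $l=k-1$ respectively. Hence $T_{k-1,k}=\calF_{k-1}(0)=G_{k-1}\succeq 0=T_{k,k}$, so the two solutions are ordered at $l=k$. To turn this into \eqref{eq:T_monotone} for all $l$ I will propagate the ordering forward and backward in $l$ starting from $l=k$; the upward step needs $\calF_l$ to be order preserving, and the downward step needs the same for the inverse map $\calG_l:=\calF_l^{-1}$, which a short push-through computation gives as $\calG_l(W)=(I-V\barq_l)^{-1}V$ with $V:=\bara_l^{-1}(W-G_l)\bara_l^{-\top}$.

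The key point is the monotonicity of $\calF_l$ on $\{T\succeq 0\}$. I would obtain it from a completion-of-squares identity (as behind \eqref{eq:riccati_K}) that represents $\calF_l$ as a Loewner minimum over gains, $\calF_l(T)=\min_W\bigl[G_l+(\bara_l-WL_l^\top)T(\bara_l-WL_l^\top)^\top+WW^\top\bigr]$, the minimum being attained at $W=\bara_lTL_l(I+L_l^\top TL_l)^{-1}$. For each fixed $W$ the bracketed expression is order preserving in $T$ by congruence, and a pointwise minimum of order-preserving maps is order preserving; hence so is $\calF_l$ on $\{T\succeq 0\}$. It follows that $\Psi_l(T):=(I+T\barq_l)^{-1}T=\bara_l^{-1}(\calF_l(T)-G_l)\bara_l^{-\top}$ is order preserving on $\{T\succeq 0\}$, and since $\calG_l(W)=-\Psi_l(-V)$, the map $\calG_l$ is order preserving on $\{W:W\preceq G_l\}$: $W\mapsto-V$ is order reversing into the positive semidefinite cone, $\Psi_l$ is order preserving there, and the outer minus sign restores the order. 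Because $I+L_l^\top TL_l\succ 0$ for $T\succeq 0$ and $I-L_l^\top VL_l\succ 0$ for $V\preceq 0$, every inverse in these formulas is defined, so no regularity of $T$ is needed; along the actual sequences the remaining inverses also exist, since $T_{k,l}=\Phi_{11}(k,l)^{-1}\Phi_{12}(k,l)$ is well defined by Lemma~\ref{lemma:phi11_inv}.

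With both monotone maps in hand, \eqref{eq:T_monotone} follows by induction on $l$ started at $l=k$. Upward, for $l\ge k$ both $T_{k,l}$ and $T_{k-1,l}$ are $\succeq 0$ by \eqref{eq:T}, so $T_{k,l}\preceq T_{k-1,l}$ gives $T_{k,l+1}=\calF_l(T_{k,l})\preceq\calF_l(T_{k-1,l})=T_{k-1,l+1}$. Downward, for $l<k$ the matrices $T_{k,l+1}$ and $T_{k-1,l+1}$ lie in $\{W:W\preceq G_l\}$ — they are $\preceq 0$ by \eqref{eq:T}, except for the single value $T_{k-1,k}=G_{k-1}$ — so $T_{k,l+1}\preceq T_{k-1,l+1}$ gives $T_{k,l}=\calG_l(T_{k,l+1})\preceq\calG_l(T_{k-1,l+1})=T_{k-1,l}$; together these cover every $l\in\bbra{0,\ft}$. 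For \eqref{eq:Tinv_monotone}: if $T_{k,l}$ and $T_{k-1,l}$ are both invertible, then \eqref{eq:T} excludes $l\in\{k-1,k\}$, so both are positive definite (when $l\ge k+1$) or both negative definite (when $l\le k-2$), and in either case $T_{k,l}\preceq T_{k-1,l}$ inverts to $T_{k,l}^{-1}\succeq T_{k-1,l}^{-1}$ by the order-reversing property of inversion on definite matrices of one fixed sign. I expect the crux to be the monotonicity of $\calF_l$ and its clean transfer to $\calG_l$, together with the sign bookkeeping coming from \eqref{eq:T}; the two inductions themselves are routine comparison arguments.
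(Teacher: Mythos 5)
Your proof is correct, but it takes a genuinely different route from the paper's. The paper computes the difference explicitly: using $\Phi_M(k-1,l)=M_{k-1}^{-1}\Phi_M(k,l)$ it derives the closed form $T_{k,l}-T_{k-1,l}=-\Phi_{11}(k,l)^{-1}B_{k-1}(I+B_{k-1}^\top K_{k,l}B_{k-1})^{-1}B_{k-1}^\top\Phi_{11}(k,l)^{-\top}$ with $K_{k,l}:=\Phi_{21}(k,l)\Phi_{11}(k,l)^{-1}$, and then reduces the lemma to showing $I+B_{k-1}^\top K_{k,l}B_{k-1}\succ 0$, which it does by a separate Riccati analysis of $K_{k,l}$ (splitting $k\le l$ from $k>l$ and invoking the backward-problem identity). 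You instead treat $T_{k,\cdot}$ and $T_{k-1,\cdot}$ as two trajectories of the same Riccati flow $\calF_l$ in $l$, observe they are ordered at the anchor $l=k$ (namely $0\preceq G_{k-1}$), and propagate the ordering forward via monotonicity of $\calF_l$ on the PSD cone and backward via monotonicity of the inverse map $\calG_l$ on $\{W\preceq G_l\}$; the sign information from \eqref{eq:T} is exactly what keeps each trajectory in the domain where the relevant map is monotone. Both arguments are sound; the paper's buys an explicit rank-$\le m$ expression for the decrement $T_{k,l}-T_{k-1,l}$ (at the cost of the auxiliary positivity claim for $K_{k,l}$), while yours is a cleaner comparison-theorem argument that reuses the same completion-of-squares/gain-minimization device the paper itself employs in \eqref{eq:riccati_K} and \eqref{eq:comparison}. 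Your handling of \eqref{eq:Tinv_monotone} — noting that invertibility together with \eqref{eq:T} forces both matrices to be definite of the same sign, so inversion reverses the Loewner order — matches the paper's one-line conclusion. The only points worth tightening are (a) stating explicitly that the Loewner minimum over $W$ is attained (the residual is $(W-W^*)(I+L_l^\top TL_l)(W-W^*)^\top\succeq 0$), which is what makes the "minimum of monotone maps" step rigorous, and (b) noting that $\calG_l\circ\calF_l=\mathrm{id}$ along the actual trajectories because $I+T_{k,l}\barq_l$ is invertible there (guaranteed by Lemma~\ref{lemma:phi11_inv}), which you do mention but only in passing.
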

   \begin{proof}
       To simplify notations, we sometimes omit the variable $ l $ of $ \Phi_{11} (k,l) $ as $ \Phi_{11}(k) $. We further drop the time index $ k $ when no confusion can arise.
           By using \eqref{eq:M_inv} and
           \begin{align}
                   &\Phi_M (k-1,l)
               = M_{k-1}^{-1} \Phi_M (k,l), \label{eq:Phi_k-1_k}
           \end{align}
           we obtain
           \begin{align*}
               &T_{k,l} - T_{k-1,l} \\
               &= \Phi_{11} (k)^{-1} \Phi_{12}(k) - (\Phi_{11} (k) + G_{k-1} \Phi_{21}(k))^{-1} \\
               &\quad \times (\Phi_{12} (k) + G_{k-1} \Phi_{22}(k)) .
           \end{align*}
           Here, it holds that
           \begin{align*}
               &(\Phi_{11} (k) + G \Phi_{21}(k))^{-1} \\
               &= \Phi_{11} (k)^{-1} - \Phi_{11} (k)^{-1} B ( I + B^\top \Phi_{21}(k) \Phi_{11} (k)^{-1}  B)^{-1} \\
               &\quad\times B^\top \Phi_{21} (k)\Phi_{11} (k)^{-1} .
           \end{align*}
           Therefore, we get
           \begin{align}
               &T_{k,l} - T_{k-1,l} \nonumber\\
               &= \Phi_{11}(k)^{-1} B ( I + B^\top \Phi_{21}(k)\Phi_{11}(k)^{-1}B)^{-1} B^\top \nonumber\\
               &\quad\times \Phi_{21}(k) \Phi_{11}(k)^{-1} \Phi_{12}(k) - \Phi_{11}(k)^{-1} BB^\top \Phi_{22}(k) \nonumber\\
               &\quad + \Phi_{11}(k)^{-1} B ( I + B^\top \Phi_{21}(k)\Phi_{11}(k)^{-1}B)^{-1} B^\top \Phi_{21}(k) \nonumber\\
               &\quad \times \Phi_{11}(k)^{-1} BB^\top \Phi_{22} (k) . \label{eq:Tk_Tk_1}
           \end{align}
           For the second term of \eqref{eq:Tk_Tk_1}, we have
           \begin{align*}
               &\Phi_{11}(k)^{-1} BB^\top \Phi_{22}(k) \\
               &= \Phi_{11}(k)^{-1} B(I + B^\top \Phi_{21}(k) \Phi_{11}(k)^{-1}B)^{-1}\\
               &\quad \times (I + B^\top \Phi_{21}(k) \Phi_{11}(k)^{-1}B) B^\top \Phi_{22}(k) .
           \end{align*}
           Substituting this into \eqref{eq:Tk_Tk_1} yields
           \begin{align*}
               &T_{k,l} - T_{k-1,l} \\
               &=\Phi_{11}(k)^{-1} B (I + B^\top \Phi_{21}(k)\Phi_{11}(k)^{-1} B)^{-1} B^\top \Phi_{21}(k) \\
               &\quad \times \Phi_{12}(k)^\top \Phi_{11}(k)^{-\top} \\
               &\quad - \Phi_{11}(k)^{-1} B( I + B^\top \Phi_{21}(k)\Phi_{11}(k)^{-1} B)^{-1}  B^\top \Phi_{22}(k) \\
               &= \Phi_{11}(k)^{-1} B( I + B^\top \Phi_{21}(k)\Phi_{11}(k)^{-1} B)^{-1}  B^\top \\
               &\quad \times  (\Phi_{21}(k) \Phi_{12}(k)^\top \Phi_{11}(k)^{-\top} - \Phi_{22}(k)) .
           \end{align*}
           Here, we used the fact that $ \Phi_{11}(k)^{-1} \Phi_{12}(k) $ is symmetric shown in \cite[Lemma~4]{Liu2022}, which also proves $ \Phi_{11}(k) \Phi_{22}(k)^\top - \Phi_{12}(k) \Phi_{21}(k)^\top = I $.
           Consequently, we obtain
           \begin{align*}
               &T_{k,l} - T_{k-1,l} = - \Phi_{11}(k,l)^{-1} B_{k-1} \nonumber\\
               &\times ( I + B_{k-1}^\top \Phi_{21}(k,l)\Phi_{11}(k,l)^{-1} B_{k-1})^{-1} B_{k-1}^\top \Phi_{11}(k,l)^{-\top} .\label{eq:Tk_Tk-1}
           \end{align*}
       
           Therefore, to verify \eqref{eq:T_monotone}, it suffices to show
           \begin{equation}\label{eq:K_positive}
           I + B_{k-1}^\top \Phi_{21}(k,l) \Phi_{11}(k,l)^{-1} B_{k-1} \succ 0. 
           \end{equation}
           Let $ K_{k,l} := \Phi_{21}(k,l) \Phi_{11}(k,l)^{-1} $.
           By \eqref{eq:Phi_k-1_k}, \rev{similar to \eqref{eq:riccati_T2},} for any $ k\in \bbra{1,\ft}, \ l\in \bbra{0,\ft} $, we have
           \begin{align}
               &K_{k-1,l}
               = \barq_{k-1} + \bara_{k-1}^\top K_{k,l} \bara_{k-1} \nonumber\\
               &- \bara_{k-1}^\top K_{k,l} B_{k-1} (I + B_{k-1}^\top K_{k,l} B_{k-1})^{-1} B_{k-1}^\top K_{k,l} \bara_{k-1} , \label{eq:K_riccati} \\
               &K_{l,l} = 0. \label{eq:initlal_K}
           \end{align}
           For $ k \le l $, it is well-known that the Riccati equation \eqref{eq:K_riccati} with the initial value \eqref{eq:initlal_K} satisfies $ I + B_{k-1}^\top K_{k,l} B_{k-1} \succ 0 $, which means \eqref{eq:K_positive} for $ k \le l $.
           Next, we consider the case when $ k > l $.
           Similar to \eqref{eq:Pi_P_positive} in Appendix~\ref{app:reverse}, we have
           \begin{align*}
               &I + B_{k-1}^\top K_{k,l} B_{k-1} \\
               &= (I + B_{k-1}^\top \bara_{k-1}^{-\top} (\barq_{k-1} - K_{k-1,l})\bara_{k-1}^{-1}B_{k-1})^{-1} .
           \end{align*}
           In addition, by \eqref{eq:K_riccati},~\eqref{eq:initlal_K}, and the same proof as in Lemma~\ref{lemma:phi11_inv}, it can be shown that $ K_{k-1,l} \preceq 0 $ for $ k > l $, which implies \eqref{eq:K_positive} for $ k > l $. As a result, we obtain \eqref{eq:T_monotone}.
           Finally, by \eqref{eq:T} in Lemma~\ref{lemma:phi11_inv} and the invertibility assumption on $ T_{k,l}, T_{k-1,l} $, we arrive at \eqref{eq:Tinv_monotone}.
   \end{proof}

   \section{Proof of Proposition~\ref{thm:riccati_solution}}\label{app:solution_riccati}
   \cyan{For finding solutions $ \{\Pi_k,H_k\} $ to \eqref{eq:riccati}--\eqref{eq:boundary}, we construct terminal values $ (X_\ft,Y_\ft) $, $ (\what{X}_\ft, \what{Y}_\ft ) $ such that for any $ k\in \bbra{0,\ft} $, the resulting solutions $ X_k $ and $ \what{X}_k $ to \eqref{eq:XY},~\eqref{eq:XYhat} are invertible, and the boundary conditions~\eqref{eq:boundary2_1},~\eqref{eq:boundary2_2} are satisfied. Then, by Proposition~\ref{prop:linear}, $ \Pi_k = Y_kX_k^{-1} $ and $ H_k = - \what{X}_k^{-\top} \what{Y}_k^\top $ satisfy \eqref{eq:riccati}--\eqref{eq:boundary}.}
   Without loss of generality, we assume that $ X_\ft = \what{X}_\ft  = I $ because their terminal values can be absorbed into $ Y_\ft $ and $ \what{Y}_\ft $ without changing the values of $ \Pi_\ft = Y_\ft X_\ft^{-1} $ and $ H_\ft = -\what{X}_\ft^{-\top} \what{Y}_\ft^\top $. Note that in this case, $ Y_\ft $ and $ \what{Y}_\ft $ are symmetric.
   
   \rev{By the invertibility of $ \varphi_{12} $ (Lemma~\ref{lem:T_invertible}) and using exactly the same argument as in the proof of \cite[Theorem~1]{Chen2018}, we can show that under $ X_\ft = \what{X}_\ft = I $, only the following two sets of terminal values $ Y_\ft $, $ \what{Y}_\ft $ lead to solutions to \eqref{eq:XY},~\eqref{eq:XYhat} satisfying the boundary conditions~\eqref{eq:boundary2_1},~\eqref{eq:boundary2_2}:}
   \begin{align}
   (X_\ft,Y_\ft) &= (I,Y_{\ft,\pm}) = (I,Z_{\ft,\pm} + \bsigma_\ft^{-1}/2) , \label{eq:terminal1}\\
(\what{X}_\ft, \what{Y}_\ft) &= (I,\what{Y}_{\ft,\pm}) := (I,Z_{\ft,\pm} - \bsigma_\ft^{-1}/2) , \label{eq:terminal2}
   \end{align}
   where
   \begin{align}
    &Z_{\ft,\pm} := - \varphi_{12}^{-1} \varphi_{11} \nonumber\\
   &\pm \bsigma_\ft^{-1/2} \left( \frac{1}{4}I + \bsigma_\ft^{1/2} \varphi_{12}^{-1} \bsigma_0 \varphi_{12}^{-\top} \bsigma_\ft^{1/2}  \right)^{1/2} \bsigma_\ft^{-1/2} . \label{eq:ZN_plus}
\end{align}
   By assumption, the Riccati equations~\eqref{eq:riccati},~\eqref{eq:H_riccati} with the terminal conditions $ \Pi_\ft = Y_{\ft,-} $, $ H_\ft = \bsigma_\ft^{-1} - Y_{\ft,-} $ have solutions $ \{\Pi_{k,-} \}_{k=0}^\ft $, $  \{H_{k,-}\}_{k=0}^\ft $. Therefore, by Proposition~\ref{prop:linear}, the solution $ \{\Pi_{k,-}, H_{k,-}\} $ satisfies the boundary conditions~\eqref{eq:boundary}.

   \cyan{Next, we show that the solutions $ X_k = X_k^{\ft,+} $ and $ \what{X}_k = \what{X}_k^{\ft,+} $ to \eqref{eq:XY},~\eqref{eq:XYhat} specified by $ (X_\ft,Y_\ft) = (I,Y_{\ft,+}) $, $ (\what{X}_\ft, \what{Y}_\ft) = (I,\what{Y}_{\ft,+})  $ are invertible for any $ k\in \bbra{0,\ft} $.}
   For $ (X_\ft, Y_\ft) = (I, Y_{\ft,+}) $, we have
   \begin{align}
       X_k^{\ft,+} = \Phi_{11} (k,\ft) + \Phi_{12} (k,\ft) \left( Z_{\ft,+} + \frac{1}{2} \bsigma_\ft^{-1} \right) \label{eq:Xkplus}
   \end{align}
   for any $ k\in \bbra{0,\ft} $.
   By Lemma~\ref{lem:T_invertible}, for any $ k\in \gram{\bbra{0, k_\rmr}} $, $ \Phi_{12} (k,\ft) $ is invertible and
   \begin{align*}
       &\Phi_{12} (k,\ft)^{-1} X_k^{\ft,+} = \Phi_{12} (k,\ft)^{-1} \Phi_{11} (k,\ft) - \varphi_{12}^{-1} \varphi_{11} \\
       & +  \bsigma_\ft^{-1/2} \left( \frac{1}{4}I + \bsigma_\ft^{1/2} \varphi_{12}^{-1} \bsigma_0 \varphi_{12}^{-\top} \bsigma_\ft^{1/2}  \right)^{1/2} \bsigma_\ft^{-1/2} + \frac{1}{2} \bsigma_\ft^{-1}.
   \end{align*}
   In addition, by Lemma~\ref{lem:T_monotone}, for any $ k\in \gram{\bbra{0, k_\rmr}} $, we have
   \begin{align*}
       \Phi_{12} (k,\ft)^{-1} \Phi_{11} (k,\ft) &\succeq \Phi_{12} (0,\ft)^{-1} \Phi_{11} (0,\ft) \\
       &= \varphi_{12}^{-1} \varphi_{11} .
   \end{align*}
   Therefore, it holds that \gram{for any $k\in \bbra{0, k_\rmr}$,}
   \begin{align*}
       &\Phi_{12} (k,\ft)^{-1} X_k^{\ft,+} \\
       &\succeq  \bsigma_\ft^{-1/2} \left( \frac{1}{4}I + \bsigma_\ft^{1/2} \varphi_{12}^{-1} \bsigma_0 \varphi_{12}^{-\top} \bsigma_\ft^{1/2}  \right)^{1/2} \bsigma_\ft^{-1/2} + \frac{1}{2} \bsigma_{\ft}^{-1} \\
       &\succ 0 ,
   \end{align*}
   which means that $ X_k^{\ft,+} $ is invertible for any $ k\in \gram{\bbra{0, k_\rmr}} $.
   By a similar argument, $ \what{X}_k^{\ft,+} $ is shown to be invertible for any $ k\in \gram{\bbra{0, k_\rmr}} $.
   
   We next prove that $ X_k = X_k^{\ft,+} $ is invertible for $ k \in \gram{\bbra{k_\rmr+1 ,\ft}} $.
   \cyan{To this end, we derive another expression of $ X_k^{\ft,+} $ given in \eqref{eq:X0k}, which is different from \eqref{eq:Xkplus}.}
   \rev{By the same argument as for \eqref{eq:terminal1},~\eqref{eq:terminal2}, under $ X_0 = \what{X_0} = I $, the two sets of initial values $ (Y_0,\what{Y}_0) = (Y_{0,\pm}, \what{Y}_{0,\pm}) $ \fin{lead to} the boundary conditions~\eqref{eq:boundary2_1},~\eqref{eq:boundary2_2}}, where \cyan{$ Y_{0,\pm} $ is defined by \eqref{eq:Y_0_pm}} and
   \begin{align}
       \what{Y}_{0,\pm} := Y_{0,\pm} - \bsigma_0^{-1} .
   \end{align}
   In the sequel, we explain that $ (X_\ft, Y_\ft) = (I, Y_{\ft,+}) $ and $ (X_0, Y_0) = (I, Y_{0,+}) $ result in the same solution $ \{\Pi_k\} $.
   The terminal values $ (X_\ft,Y_\ft ) = (I, Y_{\ft,+}) $, $(\what{X}_\ft,\what{Y}_\ft ) = (I, \what{Y}_{\ft,+})  $ lead to the initial values:
   \begin{align*}
       (X_0,Y_0) &= (X_0^{\ft,+}, Y_0^{\ft,+}) \\
       &:= (\varphi_{11} + \varphi_{12} Y_{\ft,+},\varphi_{21} + \varphi_{22} Y_{\ft,+}), \\
       (\what{X}_0, \what{Y}_0) &= (\what{X}_0^{\ft,+}, \what{Y}_0^{\ft,+}) \\
       &:=(\varphi_{11} + \varphi_{12} \what{Y}_{\ft,+},\varphi_{21} + \varphi_{22} \what{Y}_{\ft,+}) .
   \end{align*}
   By the linearity of \eqref{eq:XY},~\eqref{eq:XYhat}, and the invertibility of $ X_0^{\ft,+}, \what{X}_0^{\ft,+} $, \fin{which has already been shown,} it can be easily checked that \cyan{the solutions to \eqref{eq:XY},~\eqref{eq:XYhat} with the initial conditions}
   \begin{align*}
       (X_0, Y_0) &= \left(I, Y_0^{\ft,+}(X_0^{\ft,+})^{-1}\right), \\
       (\what{X}_0, \what{Y}_0) &= \left(I, \what{Y}_0^{\ft,+}(\what{X}_0^{\ft,+})^{-1}\right),
   \end{align*}
       satisfy the boundary conditions~\eqref{eq:boundary2_1},~\eqref{eq:boundary2_2}. \cyan{Since only the two sets $ (Y_0, \what{Y}_0) = (Y_{0,\pm}, \what{Y}_{0,\pm}) $ satisfy the boundary conditions~\eqref{eq:boundary2_1},~\eqref{eq:boundary2_2} under $ X_0 = \what{X}_0 = I $,} one of the following two relationships holds:
   \begin{align}
       &Y_0^{\ft,+}(X_0^{\ft,+})^{-1} = Y_{0,+}, \label{eq:correct}\\
       &Y_0^{\ft,+}(X_0^{\ft,+})^{-1} = Y_{0,-} .
   \end{align}
   In what follows, we show that \eqref{eq:correct} holds, which suggests that $ (X_\ft,Y_\ft) = (I,Y_{\ft,+}) $ and $ (X_0,Y_0) = (I,Y_{0,+}) $ leads to the same initial value $ \Pi_0 = Y_0^{\ft,+}(X_0^{\ft,+})^{-1} = Y_{0,+} $.
   
   Noting that $ \Phi_M (\ft,0) \Phi_M (0,\ft) = I $, we obtain
   \begin{align*}
       \phi_{11} \varphi_{11} + \phi_{12} \varphi_{21} = I,~~\phi_{11} \varphi_{12} + \phi_{12} \varphi_{22} = 0.
   \end{align*}
   Since $ \phi_{12} $ is invertible by Lemma~\ref{lem:T_invertible},
   \begin{align*}
       \varphi_{21} &= \phi_{12}^{-1} ( I - \phi_{11}\varphi_{11}) = \phi_{12}^{-1} - \phi_{12}^{-1} \phi_{11} \varphi_{11} , \\
       \varphi_{22} &= \phi_{12}^{-1} \phi_{12} \varphi_{22} = - \phi_{12}^{-1} \phi_{11} \varphi_{12} .
   \end{align*}
   Then, the left-hand side of \eqref{eq:correct} is rewritten as
   \begin{align}
       &(\phi_{12}^{-1} - \phi_{12}^{-1} \phi_{11} \varphi_{11}- \phi_{12}^{-1} \phi_{11} \varphi_{12} Y_{\ft,+}) (\varphi_{11} + \varphi_{12} Y_{\ft,+})^{-1} \nonumber\\
       &= \phi_{12}^{-1} ( I - \phi_{11} (\varphi_{11} + \varphi_{12} Y_{\ft,+})) (\varphi_{11} + \varphi_{12} Y_{\ft,+})^{-1} \nonumber\\
       &= - \phi_{12}^{-1} \phi_{11} + \phi_{12}^{-1} (\varphi_{11} + \varphi_{12} Y_{\ft,+})^{-1} , \label{eq:left_hand}
   \end{align}
   whose first term coincides with that of $ Y_{0,\pm} $.
   In addition,
   \[
       Y_{0,+} + \phi_{12}^{-1} \phi_{11} \prec 0, ~~ Y_{0,-} + \phi_{12}^{-1} \phi_{11} \succ 0 .
   \]
   Hence, by showing $ \phi_{12}^{-1} (\varphi_{11} + \varphi_{12} Y_{\ft,+})^{-1} \prec 0 $, we get \eqref{eq:correct}.
   In fact, by \cite[Lemma~4]{Liu2022}, \fin{we have} $ \phi_{12} = - \varphi_{12}^\top $, and \fin{thus},
   \begin{align}
       &(\varphi_{11} + \varphi_{12}Y_{\ft,+}) \phi_{12} \nonumber\\
       &= -\varphi_{12} \bsigma_\ft^{-1/2} \left( \frac{1}{4} I + \bsigma_\ft^{1/2} \varphi_{12}^{-1} \bsigma_0 \varphi_{12}^{-\top} \bsigma_\ft^{1/2}  \right)^{1/2} \bsigma_\ft^{-1/2} \varphi_{12}^\top \nonumber\\
       &\quad - \frac{1}{2} \varphi_{12} \bsigma_\ft^{-1} \varphi_{12}^\top \prec 0 . \nonumber 
   \end{align}
   Similarly, we have $ \what{Y}_0^{\ft,+}(\what{X}_0^{\ft,+})^{-1} = \what{Y}_{0,+} $.

   Recall that we would like to show the invertibility of $ X_k = X_k^{\ft,+} $ given by the terminal condition $ (X_\ft,Y_\ft) = (I,Y_{\ft,+}) $, or equivalently, by the initial condition $ (X_0,Y_0) = (X_0^{\ft,+},Y_0^{\ft,+}) $.
   Denote by $ X_k^{0,+} $ the solution to \eqref{eq:XY} specified by $ (X_0,Y_0) = (I,Y_{0,+}) $.
   Then, by \eqref{eq:correct} and the linearity of \eqref{eq:XY}, the initial condition $ (X_0,Y_0) = (X_0^{\ft,+},Y_0^{\ft,+}) $ leads to
   \begin{equation}\label{eq:X0k}
   X_k = X_k^{\ft,+} = X_k^{0,+} X_0^{\ft,+}  , ~~ \forall k \in \bbra{0,\ft} .
   \end{equation}
   Since \cyan{we have shown that} $ X_0^{\ft,+} $ is invertible, if $ X_k^{0,+} $ is invertible, then $ X_k = X_k^{\ft,+} $ given by $ (X_\ft,Y_\ft) = (I, Y_{\ft,+} ) $ is invertible.
   For clarity, the relationships between $ X_k^{\ft,+} $ and $ X_k^{0,+} $ are shown in Fig.~\ref{fig:proof}.
   
   \begin{figure}[!t]
       \centering
       \includegraphics[scale=0.3]{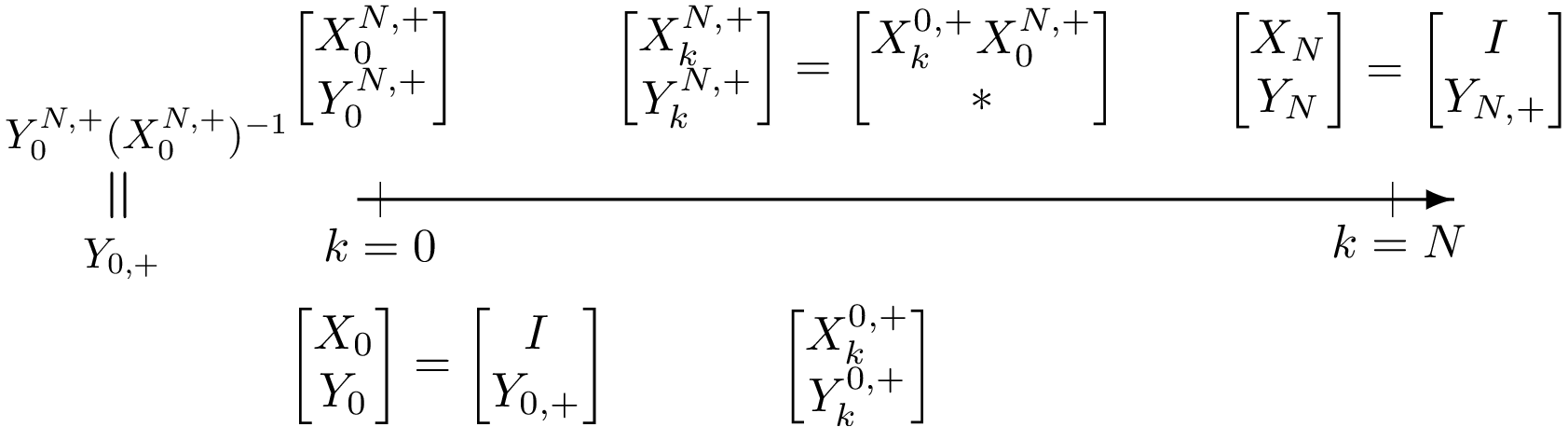}
   \put(-71,60){\eqref{eq:XY}}
   \put(-170,60){\eqref{eq:XY}}
   \put(-145,14){\eqref{eq:XY}}
   \put(-238,13){\eqref{eq:correct}}
   \put(-127,69){\eqref{eq:X0k}}
   \put(-138,9){\vector(1,0){10}}%
   \put(-135,9){\vector(-1,0){10}}%
   \put(-160,55){\vector(-1,0){10}}%
   \put(-160,55){\vector(1,0){10}}%
   \put(-60,55){\vector(-1,0){10}}%
   \put(-63,55){\vector(1,0){10}}%
       \caption{Relationships between the symbols in the proof of Proposition~\ref{thm:riccati_solution}.}
       \label{fig:proof}
   \end{figure}

   By the same argument as for the invertibility of $  X_k  = X_k^{\ft,+} $ for $ k\in \gram{\bbra{0,k_\rmr}} $, we can show that $ X_k = X_k^{0,+} $ is invertible for $ k\in \gram{\bbra
   {k_\rmr+1,\ft}} $. 
   Therefore, by \eqref{eq:X0k}, $ X_k^{\ft,+} $ is invertible for $ k\in \gram{\bbra{k_\rmr+1,\ft}} $.
   Similarly, it can be shown that $ \what{X}_k = \what{X}_k^{\ft,+} $ specified by $ (\what{X}_\ft,\what{Y}_\ft) = (I, \what{Y}_{\ft,+}) $ is invertible for any $ k\in \gram{\bbra{k_\rmr+1, \ft}} $.
   In summary, $ X_k $ and $ \what{X}_k $ with $ (X_\ft,Y_\ft) = (I, Y_{\ft,+}), (\what{X}_\ft,\what{Y}_\ft) = (I, \what{Y}_{\ft,+}) $ are invertible for any $ k\in \bbra{0,\ft}  $.
   This concludes that the Riccati equations \eqref{eq:riccati},~\eqref{eq:H_riccati} coupled through \eqref{eq:boundary} have a solution given by the terminal values~\eqref{eq:Pi_terminal},~\eqref{eq:H_terminal}.
   Finally, \eqref{eq:correct} means $ \Pi_{0,+} = Y_{0,+} $.

   \section{Proof of Proposition~\ref{prop:positive}}\label{app:positive}
   Noting that $ \Pi_{0,+} \prec -\phi_{12}^{-1} \phi_{11}$, $\Pi_{0,-} \succ -\phi_{12}^{-1} \phi_{11} $, we can obtain Proposition~\ref{prop:positive} by \cite[Corollary~3]{Liu2022}. Nevertheless, it would be insightful to give another proof utilizing the equivalent backward problem (Problem~\ref{prob:inverse}) and the explicit form of the solution $ \{\Pi_k\} $. \\
       {\bf (Proof of (i))}
       We prove \eqref{eq:BPiB_positive} by induction on $ k $. Before that, we show some properties of $ T_{k} := \Phi_{11} (0,k)^{-1} \Phi_{12} (0,k) $.
       By Lemmas~\ref{lemma:phi11_inv},~\ref{lem:T_invertible}, it holds $ T_k \succ 0 $ for $ k \in \bbra{k_\rmr,\ft} $. In addition, by \eqref{eq:riccati_T2}, we have
       \begin{equation}\label{eq:TG_positive}
       T_{k+1} = G_k + \bara_k (T_k^{-1} + \barq_k)^{-1} \bara_k^\top , \ \forall k \in \fin{\bbra{k_\rmr,\ft-1}},
       \end{equation}
   and therefore, $ T_{k+1} - G_k \succ 0 $, \fin{$ k\in \bbra{k_\rmr,\ft-1} $}. This implies
   \begin{align}
       &( I - B_{k-1}^\top T_k^{-1} B_{k-1})^{-1} \nonumber\\
       &= I - B_{k-1}^\top (- T_k + G_{k-1})^{-1} B_{k-1} \succ 0 , ~~ \forall k \in \fin{\bbra{k_\rmr+1,\ft}} . \label{eq:positive}
   \end{align}
   In addition, by \eqref{eq:TG_positive}, it holds that for any $ k\in \fin{\bbra{k_\rmr,\ft-1}} $,
   \begin{align*}
       &\bara_k^{-\top} (T_k^{-1} + \barq_k) \bara_k^{-1} \\
       &= T_{k+1}^{-1} - T_{k+1}^{-1} B_k (- I + B_k^\top T_{k+1}^{-1} B_k)^{-1} B_k^\top T_{k+1}^{-1} ,
   \end{align*}
   which yields
   \begin{align}
       \esu_k &= \bara_k^\top \esu_{k+1} \bara_k \nonumber\\
       &\quad - \bara_k^\top \esu_{k+1} B_k (I + B_k^\top \esu_{k+1} B_k)^{-1} B_k^\top \esu_{k+1} \bara_k + \barq_k \nonumber\\
       &=: \rev{\scrR_k} (\esu_{k+1}) , ~~  \forall k\in \fin{\bbra{k_\rmr,\ft-1}} , \label{eq:riccati_S}
   \end{align}
   where we defined $ \esu_k := - T_k^{-1} $, \fin{$ k \in \bbra{k_\rmr,\ft} $}. 
   Recall that $ \Pi_k = \scrR_k (\Pi_{k+1}) $. 
   Let $ \Delta_k := \Pi_k - \esu_k $. By \cite[Lemma~2.2]{Wimmer1992}, we have \fin{for any $\bbra{k_\rmr,\ft-1}$},
   \begin{align}
       &\Pi_k - \esu_k = \scrR_{k} (\Pi_{k+1}) - \scrR_{k} (\esu_{k+1}) \nonumber\\
       &= \calA_k (\Pi_{k+1})^\top \Delta_{k+1} \calA_k (\Pi_{k+1}) + \calA_k (\Pi_{k+1})^\top \Delta_{k+1} B_k \nonumber\\
       &\times ( I + B_k^\top \esu_{k+1} B_{k})^{-1}  B_k^\top \Delta_{k+1} \calA_k (\Pi_{k+1})  . \label{eq:comparison}
   \end{align}

   Now, we are ready to perform induction.
   First, for $ \Pi_{\ft,+} $ and $ \esu_\ft $, we have
       \begin{align}
           \Pi_{\ft,+} &= \esu_\ft + \frac{1}{2} \bsigma_\ft^{-1} \nonumber\\
           &\quad+  \bsigma_\ft^{-1/2} \left( \frac{1}{4}I + \bsigma_\ft^{1/2} \varphi_{12}^{-1} \bsigma_0 \varphi_{12}^{-\top} \bsigma_\ft^{1/2}  \right)^{1/2} \bsigma_\ft^{-1/2} \nonumber\\
           &\succ \esu_\ft . \label{eq:PiN_SN}
       \end{align}
   Next, we assume the induction hypothesis that for some $ k = \bar{k} \in \bbra{k_\rmr+1, \ft} $, it holds that
   \begin{equation}\label{eq:Pi_S_k}
       \Pi_{\bar{k},+} \succ \esu_{\bar{k}} .
   \end{equation}
   Then, by \eqref{eq:positive}, we have
   \begin{align}
       I +B_{\bar{k}-1}^\top \Pi_{\bar{k},+} B_{\bar{k}-1} \succeq I  +B_{\bar{k}-1}^\top \esu_{\bar{k}} B_{\bar{k}-1} \succ 0 .  \label{eq:BPiB_k_positive}
   \end{align}
   By \eqref{eq:comparison},~\eqref{eq:Pi_S_k},~\eqref{eq:BPiB_k_positive}, and the invertibility of $ \calA_{\bar{k}-1} (\Pi_{\bar{k},+}) $, it holds $ \Pi_{\bar{k}-1,+} \succ \esu_{\bar{k}-1} $. 
   This completes the induction step, and we obtain \eqref{eq:BPiB_positive} for \fin{$ k\in \bbra{k_\rmr,\ft-1} $}.

   Next, to prove \eqref{eq:BPiB_positive} for \fin{$ k \in \bbra{0,k_\rmr-1} $}, we utilize the equivalent backward problem~(Problem~\ref{prob:inverse}).
   \fin{By Proposition~\ref{prop:opt_inv}, it suffices to show that $ P_k := \Pi_{k,+} - \barq_k $ satisfies $ I - B_k^\top \bara_k^{-\top} P_k \bara_k^{-1} B_k \succ 0 $, which is equivalent to \eqref{eq:BPiB_positive}.
   In the following, we derive another expression of the initial value $ P_0 $ rather than $ \Pi_{0,+} - \barq_0 $ for proving $ I - B_k^\top \bara_k^{-\top} P_k \bara_k^{-1} B_k \succ 0 $.}
   Let $ \wtilde{\phi}_{ij} := \wtilde{\Phi}_{ij} (\ft,0), $ $(i,j=1,2) $, where
   \begin{align}
       \wtilde{\Phi} (k,l) &:=
               \begin{cases}
                   \wtilde{M}_{k-1}^{-1} \wtilde{M}_{k-2}^{-1} \cdots \wtilde{M}_{l}^{-1}, & k > l,\\
                   I, & k=l , \\
                   \wtilde{M}_{k} \wtilde{M}_{k+1} \cdots \wtilde{M}_{l-1}, & k < l ,
               \end{cases} \label{eq:rev_transition}\\
               \wtilde{M}_k &:=
       \begin{bmatrix}
           \bara_k^{-1} + \bara_k^{-1}B_kB_k^\top \barq_{k+1} & ~~ -\bara_k^{-1}B_kB_k^\top \\
           - \bara_k^{\top} \barq_{k+1} & \bara_k^{\top}
       \end{bmatrix} .\nonumber
   \end{align}
   The reachability Gramian for the backward system~\eqref{eq:system_inv} is given by
   \begin{equation*}
       \wtilde{\calR}(k_1,k_0) := \sum_{k = k_0}^{k_1-1} \wtilde{\Phi}_{\tilde{A}}(k_0,k) \wtilde{B}_k\wtilde{B}_k^\top  \wtilde{\Phi}_{\tilde{A}}(k_0,k)^\top , \  k_1 > k_0,
   \end{equation*}
   where $\wtilde{B}_k := -\bara_k^{-1} B_k $ and $ \wtilde{\Phi}_{\tilde{A}} (k_0,k) $ denotes the time-reversed transition matrix for $ \{\wtilde{A}_k\} $, \fin{$ \wtilde{A}_k := \bara_k^{-1} $} defined like in \eqref{eq:rev_transition}. Then, it can be verified that
   \begin{equation}\label{eq:controllability}
       \wtilde{\calR}(k_1,k_0) = \Phi_{\bara} (k_0,k_1) \calR(k_1,k_0) \Phi_{\bara} (k_0,k_1)^\top .
   \end{equation}
   Therefore, under Assumption~\ref{ass:reachability}, $ \wtilde{\calR}(k,0) $ is invertible for any $ k\in \bbra{k_\rmr,\ft} $, and $ \wtilde{\calR} (\ft,k) $ is invertible for any \gram{$ k\in \bbra{0,k_\rmr} $}.
   By applying Lemma~\ref{lem:T_invertible} to the backward problem, we deduce that $ \wtilde{T}_k := \wtilde{\Phi}_{11} (\ft,k)^{-1} \wtilde{\Phi}_{12} (\ft,k) $ is invertible for \gram{$ k\in \bbra{0,k_\rmr} $}, and thus, $\wtilde{\phi}_{12} $ is also invertible.
   Moreover, similar to \eqref{eq:riccati_S}, $ \wtilde{\esu}_k := - \wtilde{T}_k^{-1}, \ \gram{k\in \bbra{0,k_\rmr}} $ satisfies
   \begin{align}
       &\wtilde{\esu}_{k+1} = \bara_k^{-\top} \wtilde{\esu}_k \bara_k^{-1} - \bara_k^{-\top} \wtilde{\esu}_k \bara_k^{-1} B_k \bigl(I + B_k^\top \bara_k^{-\top} \wtilde{\esu}_k  \nonumber\\
       &\times \bara_k^{-1} B_k\bigr)^{-1} B_k^\top \bara_k^{-\top} \wtilde{\esu}_k \bara_k^{-1} + \barq_{k+1} ,  ~~ \forall k\in \gram{\bbra{0,k_\rmr-1}},   \label{eq:S_riccati}\\
       &\wtilde{\esu}_0 = - \wtilde{\phi}_{12}^{-1} \wtilde{\phi}_{11} . \nonumber
   \end{align}

   By the same argument as for the forward problem (Problem~\ref{prob:density_control}), two candidates for the initial condition \fin{$ (J_0,P_0) = (J_{0,\pm}, P_{0,\pm})  $} under which the solution $ \{J_k,P_k\} $ to the Riccati equations~\eqref{eq:riccati_inv},~\eqref{eq:riccati_inv_dual} satisfies the boundary conditions~\eqref{eq:boundary_inv}, are given by
   \begin{align*}
       J_{0,\pm} &:= -\wtilde{\phi}_{12}^{-1} \wtilde{\phi}_{11} + \frac{1}{2} \bsigma_0^{-1}\\
       &\quad\pm \bsigma_0^{-1/2} \left( \frac{1}{4} I + \bsigma_0^{1/2} \wtilde{\phi}_{12}^{-1} \bsigma_\ft \wtilde{\phi}_{12}^{-\top} \bsigma_0^{1/2} \right)^{1/2} \bsigma_0^{-1/2}  ,\\
       P_{0,\pm} &:= \bsigma_0^{-1} - J_{0,\pm} \\
       &= \wtilde{\phi}_{12}^{-1} \wtilde{\phi}_{11} +  \frac{1}{2} \bsigma_0^{-1} \\
       &\quad\mp \bsigma_0^{-1/2} \left( \frac{1}{4} I + \bsigma_0^{1/2} \wtilde{\phi}_{12}^{-1} \bsigma_\ft \wtilde{\phi}_{12}^{-\top} \bsigma_0^{1/2} \right)^{1/2} \bsigma_0^{-1/2}   . 
   \end{align*}

   On the other hand, by Propositions~\ref{thm:riccati_solution},~\ref{prop:opt_inv}, \fin{the initial condition $ \wtilde{P}_{0,+} $ of \eqref{eq:riccati_inv_dual} where}
   \begin{align*}
       \wtilde{P}_{0,+} &:= \Pi_{0,+} - \barq_0 \\
       &= - \phi_{12}^{-1} \phi_{11} + \frac{1}{2} \bsigma_0^{-1} - \barq_0\\
       &\quad - \bsigma_0^{-1/2} \left( \frac{1}{4}I + \bsigma_0^{1/2} \phi_{12}^{-1} \bsigma_\ft \phi_{12}^{-\top} \bsigma_0^{1/2}  \right)^{1/2} \bsigma_0^{-1/2} ,
   \end{align*}
   \fin{also leads to the boundary conditions~\eqref{eq:boundary_inv}, and thus, satisfies} either $ P_{0,+} = \wtilde{P}_{0,+} $ or $ P_{0,-} = \wtilde{P}_{0,+} $. 
   \fin{Let us regard $ P_{0,\pm} $ and $ \wtilde{P}_{0,+} $ as functions of $ \bsigma_\ft $. Assume that $ P_{0,-} = \wtilde{P}_{0,+} $ for some $ \bsigma_\ft $. Then, by the continuity of $ P_{0,\pm} $, $ \wtilde{P}_{0,+} $ in $ \bsigma_\ft $ and $ P_{0,-} \succ P_{0,+} $, it holds that $ P_{0,-} = \wtilde{P}_{0,+} $ for any $ \bsigma_\ft $. However, for sufficiently large $ \bsigma_\ft $, $ \wtilde{P}_{0,+} $ is negative definite while $ P_{0,-} $ is positive definite.}
   \fin{This is a contradiction, and thus, it holds that}
   \begin{equation}\label{eq:P0_Pi0}
       P_{0,+} = \wtilde{P}_{0,+} = \Pi_{0,+} - \barq_0 .
   \end{equation}
   \fin{Therefore, the solution $ P_k = \Pi_{k,+} - \barq_k $ to \eqref{eq:riccati_inv_dual} satisfies $ P_0 = P_{0,+} $.}

   For such $ \{P_k\} $, let $ U_k := - P_k$, $U_0 = - P_{0,+} $. Then we have
   \begin{align}
       &U_{k+1} = \bara_k^{-\top} U_k \bara_k^{-1} - \bara_k^{-\top} U_k \bara_k^{-1} B_k \nonumber\\
       &\times (I + B_k^\top \bara_k^{-\top} U_k \bara_k^{-1} B_k)^{-1} B_k^\top \bara_k^{-\top} U_k \bara_k^{-1} + \barq_{k+1}  \label{eq:U_riccati}
   \end{align}
   for any $ k\in \bbra{0,\ft-1} $.
   Noting that $ U_0 \succ \wtilde{\esu}_0 $, by \eqref{eq:S_riccati},~\eqref{eq:U_riccati}, and the same argument as for $ \Pi_{k,+}, \esu_k $, we obtain
   \begin{align}
       I + B_k^\top \bara_k^{-\top} U_k \bara_k^{-1} B_k &\succeq I + B_k^\top \bara_k^{-\top} \fin{\wtilde{\esu}_k} \bara_k^{-1} B_k \succ 0 , \nonumber\\
       &\qquad \forall k\in \bbra{0,k_\rmr -1}, \label{eq:BUB_positive}
   \end{align}
   which implies $ I + B_k^\top \Pi_{k+1,+} B_k \succ 0 $ for any $ k\in \bbra{0,k_\rmr -1} $ \fin{by Proposition~\ref{prop:opt_inv}.}
   The proof of \eqref{eq:BHB_positive} is the same as that of \eqref{eq:BPiB_positive} by exchanging the roles of Problems~\ref{prob:density_control},~\ref{prob:inverse}. \cyan{In fact, by the same argument as for \eqref{eq:BPiB_positive}, it can be shown that for the solution $ \{J_k\} $ to \eqref{eq:riccati_inv} satisfying $ J_k = H_{k,+} + \barq_k $, it holds $ I + B_k^\top \bara_k^{-\top} J_k \bara_k^{-1} B_k \succ 0 $ for any $ k\in \bbra{0,\ft-1} $. \fin{This means \eqref{eq:BHB_positive} by Proposition~\ref{prop:opt_inv}, which completes the proof of (i).}}

   {\bf (Proof of (ii))}
   We prove the statement (ii) by contradiction. Assume that for any $ k\in \bbra{0,\ft-1} $, it holds $ I + B_{k}^\top \Pi_{k+1,-} B_{k} \succ 0  $.
       By $ \Pi_{\ft,-} \prec \esu_\ft $, $ I + B_{\ft-1}^\top \Pi_{\ft,-} B_{\ft-1} \succ 0  $, and
       \begin{align}
           &\esu_k - \Pi_k = -\calA_k (\esu_{k+1})^\top \Delta_{k+1} \calA_k (\esu_{k+1}) \nonumber\\
           &+ \calA_k (\esu_{k+1})^\top \Delta_{k+1} B_k ( I + B_k^\top \Pi_{k+1} B_{k})^{-1}  B_k^\top \nonumber\\
           &\times \Delta_{k+1} \calA_k (\esu_{k+1}) ,~~ \forall k\in \fin{\bbra{k_\rmr,\ft-1}} , \label{eq:comparison2} 
       \end{align}
       where $ \Delta_k = \Pi_k - \esu_k $, we obtain $ \Pi_{\ft-1,-} \prec \esu_{\ft-1} $. By induction and $ \esu_{k} = - T_k^{-1} \prec 0 $, $ k \in \bbra{k_\rmr,\ft} $, it holds that
       \begin{equation}\label{eq:Pi_negative}
           \Pi_{k,-} \prec \esu_k \prec 0  , ~~ \forall k\in \bbra{k_\rmr,\ft} .
       \end{equation}
   
       Let $ \{U_k\} =  \{U_{k,-}\} $ be the solution to \eqref{eq:U_riccati} with the initial condition $ U_0 = -P_{0,-} $. By \eqref{eq:P0_Pi0}, it \fin{must} hold $ P_{0,-} = \Pi_{0,-} - \barq_0 $, and hence by Proposition~\ref{prop:opt_inv}, $ U_{k,-} = -(\Pi_{k,-} - \barq_k) $ for any $ k\in \bbra{0,\ft} $.
       Note that $ U_{0,-} \prec \wtilde{\esu}_0 $, and $ I + B_k^\top \Pi_{k+1,-} B_k \succ 0 $ is equivalent to $ I + B_k^\top \bara_k^{-\top} U_{k,-} \bara_k^{-1} B_k \succ 0  $. Then, by \eqref{eq:S_riccati},~\eqref{eq:U_riccati}, and the same argument as for $ \Pi_{k,-} $ and $ \esu_k $, we get
       \begin{equation}
           U_{k,-} \prec \wtilde{\esu}_k \prec 0 , ~~ \forall k \in \gram{\bbra{0,k_\rmr}} .
       \end{equation}
       Therefore, we have $\Pi_{k_\rmr,-} \succ \barq_{k_\rmr} \succeq 0 $, which contradicts \eqref{eq:Pi_negative}. This completes the proof.

   \section{Proof of Proposition~\ref{prop:opt_inv} and \eqref{eq:opt_forward_another}}\label{app:reverse}
   In this appendix, when no confusion is possible, we will drop the subscript $ k $.
   The Riccati equation~\eqref{eq:H_riccati} can be written as
   \begin{align*}
       H_k =\modi{\bara^\top} H_{k+1} \bara(I - \bara^{-1} BB^\top H_{k+1} \bara)^{-1} - \barq ,
   \end{align*}
   which yields
   \begin{align*}
       (H_k + \barq_k) (I - \bara^{-1} BB^\top H_{k+1}\bara) = \bara^\top H_{k+1} \bara .
   \end{align*}
   Therefore, we have
   \begin{equation}\label{eq:HQ}
       H_k + \barq_k = (\bara^\top + (H_k + \barq_k) \bara^{-1}BB^\top) H_{k+1} \bara .
   \end{equation}
   Here, $ C_k := \bara^\top + (H_k + \barq_k) \bara^{-1}BB^\top $ is invertible. In fact, formally it holds that
   \begin{align}
       C_k^{-1} &= \bigl[I - \bara^{-\top} (H_k + \barq_k) \bara^{-1} B \nonumber\\
       &\times (I + B^\top \bara^{-\top}(H_k  +\barq_k)\bara^{-1} B)^{-1} B^\top\bigr] \bara^{-\top} .\label{eq:Cinv}
   \end{align}
   Hence, the invertibility of $ I + B^\top \bara^{-\top}(H_k  +\barq_k)\bara^{-1} B $ implies that $ C_k $ is invertible.
   Moreover, we have
   \begin{align*}
       &I + B^\top \bara^{-\top}(H_k  +\barq_k)\bara^{-1} B \\
       &= I + B^\top H_{k+1} (I - BB^\top H_{k+1})^{-1} B ,
   \end{align*}
   whose right-hand side has the inverse matrix $ I - B^\top H_{k+1} B $.
   Therefore, $ C_k $ is invertible, and by \eqref{eq:HQ} and \eqref{eq:Cinv},
   \begin{align}
       &H_{k+1} = \bigl[I - \bara_k^{-\top} (H_k + \barq_k)\bara_k^{-1} B_k \nonumber\\
       &\times (I + B_k^\top \bara_k^{-\top} (H_k + \barq_k) \bara_k^{-1} B_k)^{-1}B_k^\top\bigr] \bara_k^{-\top} \nonumber\\
       &\times (H_k + \barq_k) \bara_k^{-1} , \label{eq:H}
   \end{align}
   which means $ J_k := H_k + \barq_k $ is a solution to \eqref{eq:riccati_inv}.
   Similar to \eqref{eq:H}, we obtain
   \begin{align*}
       &\Pi_{k+1} = \bigl[I - \bara_k^{-\top} (\Pi_k - \barq_k) \bara_k^{-1} B_k \\
       &\times(-I + B_k^\top \bara_k^{-\top} (\Pi_k - \barq_k) \bara_k^{-1} B_k)^{-1} B_k^\top \bigr]\bara_k^{-\top} \\
       &\times (\Pi_k - \barq_k) \bara_k^{-1}  .
   \end{align*}
   Hence, $ P_k :=  \Pi_k - \barq_k $ satisfies \eqref{eq:riccati_inv_dual}.
   Moreover, \rev{a straightforward calculation with \eqref{eq:riccati_inv_dual} yields that}
   \begin{align}
       &I + B_k^\top \Pi_{k+1} B_k = I + B_k^\top (P_{k+1} + \barq_{k+1}) B_k \nonumber\\
       &= (I - B_k^\top \bara_k^{-\top} P_k \bara_k^{-1} B_k)^{-1} . \label{eq:Pi_P_positive}
   \end{align}
   Similarly, for $ J_k = H_k + \barq_k $, we have
   \begin{equation}\label{eq:H_J}
       I - B_k^\top H_{k+1} B_k = (I + B_k^\top \bara_k^{-\top} J_k \bara_k^{-1} B_k)^{-1} ,
   \end{equation}
   and thus, $ I - B_k^\top H_{k+1} B_k \succ 0 $ is equivalent to $ I + B_k^\top \bara_k^{-\top} J_k \bara_k^{-1} B_k \succ 0 $.

   Furthermore, by the boundary conditions~\eqref{eq:boundary} for $ \{\Pi_k\}, \{H_k\} $, $ J_k = H_k + \barq_k $ and $ P_k = \Pi_k - \barq_k $ satisfy the boundary conditions~\eqref{eq:boundary_inv}.

   By \eqref{eq:H_riccati} and \eqref{eq:H_J}, the mean of the policy~\eqref{eq:opt_policy_reverse} can be written as
   \begin{align}
       &\left(I - B_k^\top H_{k+1} B_k\right) B_k^\top \bara_k^{-\top} (H_k + \barq_k) \bara_k^{-1} \xi_{k+1} \nonumber\\
       &= B_k^\top H_{k+1} \xi_{k+1} . \label{eq:opt_rev_rewrite}
   \end{align}
   Hence, by Corollary~\ref{cor:reverse}, \eqref{eq:opt_policy_inv} is the unique optimal policy of Problem~\ref{prob:inverse}.
   
   Lastly, for the optimal backward state process $ \{\xi_k^*\} $ driven by \eqref{eq:opt_policy_inv} and the optimal forward state process $ \{x_k^*\} $ given in Proposition~\ref{prop:opt}, we have
   \begin{equation}
       \bbE[\xi_k^* (\xi_k^*)^\top] = (J_k + P_k)^{-1} = (H_k + \Pi_k)^{-1} = \bbE[x_k^* (x_k^*)^\top],
   \end{equation}
   which completes the proof of Proposition~\ref{prop:opt_inv}.

   \revv{By the same argument as for \eqref{eq:opt_rev_rewrite} together with \eqref{eq:riccati_inv_dual} and \eqref{eq:Pi_P_positive}, we obtain \eqref{eq:opt_forward_another}.}



\section*{References}
\vspace{-\baselineskip}
\bibliographystyle{IEEEtran}
\bibliography{TAC_maxent_state_cost}

\begin{IEEEbiography}{Kaito Ito} (Member, IEEE) received the bachelor's degree in engineering, and the master's and doctoral degrees in informatics from Kyoto University, Kyoto, Japan, in 2017, 2019, and 2022, respectively.
    He was an Assistant Professor at the Department of Computer Science, Tokyo Institute of Technology, Yokohama, Japan in 2022--2024.
	He is currently an Assistant Professor at the Department of Information Physics and Computing, The University of Tokyo, Tokyo, Japan. His research interests include stochastic control, optimization, and machine learning.
\end{IEEEbiography}

\begin{IEEEbiography}{Kenji Kashima} (Senior Member, IEEE) received his Doctoral degree in Informatics from Kyoto University in 2005. He was with Tokyo Institute of Technology and Osaka University, before he joined Kyoto University in 2013, where he is currently an Associate Professor. He was an Alexander von Humboldt Research Fellow at Universit\"at Stuttgart, Germany. His research interests include control and learning theory for complex dynamical systems, and their applications. He received IEEE Control Systems Society (CSS) Roberto Tempo Best CDC Paper Award, Pioneer Award of SICE Control Division, and so on. He has served as an Associate Editor of IEEE Transactions on Automatic Control, IEEE Control Systems Letters, and IEEE CSS Conference Editorial Board. He is a member of the steering committee of MTNS.
\end{IEEEbiography}



\end{document}